\newcommand{\rrightarrow}{\mathrel{\mathrlap{\rightarrow}\mkern1mu\rightarrow}}
\DeclareMathOperator{\Diff}{Diff}
\DeclareMathOperator{\Emb}{Emb}
\DeclareMathOperator{\Isom}{Isom}
\DeclareMathOperator{\Fr}{Fr}
\DeclareMathOperator{\GL}{GL}
\DeclareMathOperator{\SO}{SO}
\newcommand{\interior}[1]{\smash{\mathring{#1}}}
\DeclareMathOperator{\Norm}{Norm}
\DeclareMathOperator{\Dih}{Dih}
\DeclareMathOperator{\Stab}{Stab}
\DeclareMathOperator{\image}{im}
\newcommand{\hq}{/\!\!/} 
\newcommand{\Or}{\operatorname{O}}
\newtheorem{theorem}{Theorem}[section]
\newtheorem{proposition}[theorem]{Proposition}
\newtheorem{lemma}[theorem]{Lemma}
\newtheorem{corollary}[theorem]{Corollary}
\theoremstyle{definition}
\newtheorem{definition}[theorem]{Definition}
\newtheorem{notation}[theorem]{Notation}
\newtheorem{example}[theorem]{Example}
\newtheorem{remark}[theorem]{Remark}
\newcommand{\fakeenv}{} 
\newenvironment{restate}[2]  
{
  \renewcommand{\fakeenv}{#2}   
  \theoremstyle{plain}
  \newtheorem*{\fakeenv}{#1~\ref{#2}} 
  \begin{\fakeenv}  
}
{ \end{\fakeenv} }
\begin{document}
\title{Cohomology of the diffeomorphism group of the connected sum of two generic lens spaces}
\author{Zoltán Lelkes}
\date{}

\maketitle

\begin{abstract}
    We consider the connected sum of two three-dimensional lens spaces $L_1\#L_2$, where $L_1$ and $L_2$ are non-diffeomorphic and are of a certain "generic" type.
    Our main result is the calculation of the cohomology ring $H^\ast(B\Diff(L_1\#L_2);\mathbb{Q})$, where $\Diff(L_1\#L_2)$ is the diffeomorphism group of $M$ equipped with the $C^\infty$-topology.
    We know the homotopy type of the diffeomorphism groups of generic lens spaces this, combined with a theorem of Hatcher forms the basis of our argument.
\end{abstract}
\section{Introduction}
For a smooth 3-manifold $M$, let $\Diff(M)$ be its diffeomorphism group endowed with the $C^\infty$-topology.
The space $B\Diff(M)$ classifies smooth $M$-bundles, in the sense that concordance classes of smooth $M$-bundles over a space $X$ are in bijection with homotopy classes of maps $X\to B\Diff(M)$, where this bijection is given by pulling back the universal smooth $M$-bundle over $B\Diff(M)$, see \cite{galat19}.
Therefore, the cohomology of $B\Diff(M)$ gives characteristic classes of smooth $M$-bundles.

The 3-dimensional lens space $L(m, q)$ is the quotient of $S^3\subseteq \mathbb{C}^2$ by the action of $C_m$, the cyclic group of order m, induced by multiplication with $\xi_m$ in the first coordinate and with $\xi_m^q$ in the second coordinate, where $\xi_m$ is the mth root of unity.
These inherit the structure of a (Riemannian) 3-manifold and in fact they are prime 3-manifolds.
We call a 3-dimensional lens space a generic lens space if $m>2$, $1<q<\frac{m}{2}$, and $q^2\not\equiv \pm 1 \mod m$.
Generic lens spaces do not admit any orientation reversing diffeomorphisms, see \cite{mccul00}.
In this text, we will always take cohomology with rational coefficients and in order to make notation more convenient we omit them.
We prove the following main result.

\begin{restate}{Theorem}{main result}
    Let $L_1$ and $L_2$ be two non-diffeomorphic  two generic lens spaces.
    \[H^\ast(B\Diff(L_1\#L_2))\cong \mathbb{Q}[\mu^2, \eta^2, \nu^2, \vartheta^2] / (\mu^2\eta^2, \nu^2\vartheta^2, \mu^2+\eta^2-\nu^2-\vartheta^2).\]
\end{restate}
We compute the mapping class group of $L_1\#L_2$ as well, this computation plays a crucial role in showing the main result.
\begin{restate}{Theorem}{thm: mapping class group}
    Let $L_1$ and $L_2$ be two non-diffeomorphic generic lens spaces.
    \[\pi_0 (\Diff(L_1\#L_2)) \cong C_2\times C_2.\]
\end{restate}

To expand on Theorem \ref{main result} let us give a rundown of where the generators $\mu$, $\eta$, $\nu$, $\vartheta$ in  ultimately arise from.
By \cite{Hong11} for a generic lens space $L$, the inclusion 
$\Isom(L)\hookrightarrow \Diff(L)$ is a weak equivalence, where $\Isom(L)$ is the isometry group of $L$.
The isometry group of a generic lens space is calculated in \cite{mccul00}. 
It is shown there that $\Isom(L)_0$ is covered m-fold by an $\SO(2)\times \SO(2)$ subgroup of $\SO(4)$, where $G_0\triangleleft G$ denotes the path component of the identity in the topological group $G$.
Let us denote by $\mathbb{Q}[e\otimes 1, 1\otimes e]$ the cohomology ring of $\SO(2)\times \SO(2)$ where the two generators are the Euler classes pulled back along the projections.
In the cohomology ring of $B\Diff(L_1)_0$, we denote $\mu$ the preimage of $e\otimes 1$ and $\eta$ the preimage of $1\otimes e$.
Similarly for $B\Diff(L_2)_0$, $\nu$ denotes the preimage of $e\otimes 1$ and $\vartheta$ denotes the preimage of $1\otimes e$.
The theorem of Hatcher referenced in the abstract is remarked in \cite{Hatch81} and states that in case $M$ is the connected sum of two prime 3-manifolds, then $\Diff(M)$ deformation retracts onto $\Diff(M, S^2)$ where $S^2\subseteq M$ is a copy of the non-trivial 2-sphere in $M$.
We calculate $H^\ast(B\Diff(L_1\#L_2, S^2)_0)$ via considering the restrictions to $B\Diff(L_1\setminus \interior{D^3})_0$ and $B\Diff(L_2\setminus \interior{D^3})_0$.
We show that $B\Diff_\text{pt}(L)_0 \simeq B\Diff(L\setminus\interior{D^3})_0$, where $\Diff_\text{pt}(L)_0$ is the subgroup of $\Diff(L)_0$ consisting of those diffeomorphisms that leave a given point $\text{pt}\in L_1\#L_2$ fixed.
In the cohomology of $B\Diff_\text{pt}(L)_0$ we pull back the generators from the generators of $B\Diff(L)_0$ via the inclusion.
Finally, note that $H^\ast(B\Diff(L_1\#L_2))$ is the subring $H^\ast(B\Diff(L_1\#L_2)_0)^{\pi_0\Diff(L_1\#L_2)}$.
For more details on this and for an overview of the proof, see Section \ref{strategy section}.

\subsection*{Comparison with previous work}
In dimension two, the Madsen-Weiss theorem \cite{MadsenWeiss07} proves the Mumford conjecture and describes the cohomology of $B\Diff(F)$ in a stable range for $F$, a smooth, compact, connected and oriented surface.

In high dimensions, Randal-Williams and Galatius \cite{OscarSoren17} show an analogue of the Madsen–Weiss theorem for any simply-connected manifold of dimension $2n\geq 6$.

In dimension 3 most of the work focuses on prime manifolds. Hatcher proved the Smale conjecture $\Diff(S^3)\simeq O(4)$ in \cite{Hatch83} and $\Diff(S^1\times S^2)\simeq O(2)\times O(3)\times \Omega O(3)$ in \cite{Hatch81}.
For Haken 3-manifods, by the work of Waldhausen \cite{Waldh68}, Hatcher \cite{Hatch76}, and Ivanov \cite{Ivanov79} the calculations of the homotopy types of $\Diff(M)$ largely reduce to those of the mapping class group.
A notable exception is \cite{bamler19} where they show the generalized Smale conjecture for all 3-dimensional spherical spaces, as well as $\Diff(\mathbb{R}P^3\#\mathbb{R}P^3)\simeq \Or(1)\times \Or(2)$.

In \cite{jan24} Boyd, Bregman, and Steinebrunner show that for a compact, orientable 3-manifold $M$, $B\Diff(M)$ is of finite type.
Their paper is where the outline of the arguments in this work originates.
In an upcoming paper they aim to calculate the rational cohomology ring
 of $B\Diff((S^1 \times S^2)^{\#2})$. 

In most cases when we know the homotopy type of $\Diff(M)$, if $\pi_0\Diff(M)$ is finite, it turns out to be that of a compact Lie group.
However, this is not the case for $L_1\#L_2$ where $L_1$ and $L_2$ are non-diffeomorphic generic lens spaces.
\begin{corollary}
    Let $L_1$ and $L_2$ be non-diffeomorphic generic lens spaces. $B\Diff(L_1\#L_2)$ is not weakly equivalent to the classifying space of a compact Lie group.
\end{corollary}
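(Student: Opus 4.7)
The plan is to exhibit a ring-theoretic property satisfied by $H^\ast(BG;\mathbb{Q})$ for every compact Lie group $G$ but violated by the ring computed in Theorem~\ref{main result}. The natural candidate is being an integral domain.

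First I would argue that $H^\ast(B\Diff(L_1\#L_2))$ is not a domain. The presentation in Theorem~\ref{main result} contains the relation $\mu^2\eta^2=0$, so it suffices to check that $\mu^2$ and $\eta^2$ are individually nonzero. This is immediate from the evaluation
\[\mathbb{Q}[\mu^2,\eta^2,\nu^2,\vartheta^2]/(\mu^2\eta^2,\nu^2\vartheta^2,\mu^2+\eta^2-\nu^2-\vartheta^2) \to \mathbb{Q}[t]\]
sending $\mu^2\mapsto t$, $\eta^2\mapsto 0$, $\nu^2\mapsto 0$, $\vartheta^2\mapsto t$ (and its counterpart swapping the roles of $\mu$ and $\eta$); that this is well-defined amounts to verifying the three relations by inspection.

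Next I would show that $H^\ast(BG;\mathbb{Q})$ is a domain for every compact Lie group $G$. Writing $G_0$ for the identity component and $\pi=\pi_0 G$, the short exact sequence $1\to G_0\to G\to \pi\to 1$ yields a fibration $BG_0\to BG\to B\pi$. Because $\pi$ is a finite group, $H^{>0}(B\pi;\mathbb{Q})=0$, so the rational Serre spectral sequence collapses at the edge and gives an isomorphism $H^\ast(BG;\mathbb{Q})\cong H^\ast(BG_0;\mathbb{Q})^{\pi}$. By Borel's theorem $H^\ast(BG_0;\mathbb{Q})$ is a polynomial ring on even-degree generators and, in particular, an integral domain; the $\pi$-invariants form a subring, hence are a domain as well.

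Combining these two observations rules out any weak equivalence $B\Diff(L_1\#L_2)\simeq BG$ for a compact Lie group $G$. I do not anticipate any substantive obstacle: the only point requiring attention is the verification that the evaluation map above is well-defined on the quotient, and the only nontrivial input beyond that is the classical fact that $H^\ast(BG_0;\mathbb{Q})$ is polynomial.
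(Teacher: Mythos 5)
Your proposal is correct and follows essentially the same route as the paper: both arguments show that $H^\ast(B\Diff(L_1\#L_2);\mathbb{Q})$ fails to be an integral domain (via the relation $\mu^2\eta^2=0$ with $\mu^2,\eta^2\neq 0$), while $H^\ast(BG;\mathbb{Q})\cong H^\ast(BG_0;\mathbb{Q})^{\pi_0G}$ is a subring of a polynomial ring on even generators by Hopf's (Borel's) theorem, hence a domain. Your only additions are the explicit evaluation homomorphism certifying $\mu^2,\eta^2\neq 0$ and the Serre spectral sequence derivation of the invariants isomorphism, which the paper instead cites; these are fine.
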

This is a consequence of Theorem \ref{main result} and Hopf's theorem (see e.g. \cite[Theorem 1.81]{Felix08}).
The latter states that for any $G$ compact Lie group, $H^\ast(BG_0)$ is a free polynomial ring on even generators.
Furthermore, $H^\ast(BG) \cong H^\ast(BG_0)^{G/G_0}$ (see e.g. \cite[Proposition 3G.1]{Hatch22}).
This means in particular that $H^\ast(BG)$ is an ideal domain, while $H^\ast(B\Diff(L_1\#L_2))$ is not by Theorem \ref{main result}.

\subsection*{Acknowledgements}
This project has grown out of my master's thesis, which I wrote under the supervision of Jan Steinebrunner.
I cannot thank him enough for his insights and ideas.
Writing both the thesis and this paper at every turn he has been there to provide guidance; it has truly been a great experience working with him.

\section{Background}\label{the setting}

\subsection{Lens spaces and their isometries}

We concern ourselves with 3-dimensional lens spaces, these are manifolds $L(m, q)$ for coprime $m, q\in \mathbb{N}$ such that $L(m,  q)$ is the quotient of $S^3\subseteq \mathbb{C}$ by the action generated by multiplication in the first coordinate by $e^\frac{2\pi i}{m}$ and  in the second by $e^\frac{2\pi i q}{m}$.
Two lens spaces $L(m_1, q_1)$ and $L(m_2, q_2)$ are diffeomorphic if and only if $m_1 = m_2$ and $q_1+q_2 \equiv 0 \mod m_1$ or $q_1q_2\equiv 1 \mod m_1$.
This is shown for example in \cite[Theorem 2.5]{Hatch23}.

An irreducible 3-manifold is a 3-dimensional manifold in which every embedded 2-sphere bounds a 3-disc.
A consequence of the Poincaré conjecture is that a connected, compact, orientable 3-manifold $M$ is irreducible if and only if $\pi_2(M)$ is trivial.
Since any 3-dimensional lens space is covered by the 3-sphere its second homotopy group is zero and thus all 3-dimensional lens spaces are irreducible.

By explicitly considering the cellular structure of $L(m, q)$ its rational cohomology can be shown to be $\mathbb{Q}$ in degrees $0$ and $3$ and trivial in all other degrees.
The quotient map $S^3\to L(m, q)$ induces an isomorphism on rational cohomology, since it is injective in top degree as it is a covering.

We take the unique metric on $L(m, q)$ that makes the covering $S^3 \to L(m, q)$ a Riemannian covering when considering the standard metric on $S^3$, such a metric exists as the action of $C_m$, a discrete subgroup of the isometry group of $S^3$, is free.

Recall the Smale conjecture proven by Hatcher in \cite{Hatch83}.
\begin{theorem}\label{thm: Smale conjecture}
    The inclusion $\Or(4)\cong\Isom(S^3)\hookrightarrow\Diff(S^3)$ is a weak equivalence, where $\Isom(S^3)$ denotes the group of isometries of $S^3$ when endowed with the standard Riemannian metric.
\end{theorem}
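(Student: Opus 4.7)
The plan is to reduce Theorem \ref{thm: Smale conjecture} to the contractibility of the disc diffeomorphism group $\Diff(D^3,\partial D^3)$, and then sketch how the latter is established. Since the orientation-reversing component is handled by composing with a fixed reflection in $\Or(4)$, it suffices to treat the orientation-preserving case. First I would set up the restriction fibration
\[\Diff(S^3,D^3)\longrightarrow\Diff^+(S^3)\longrightarrow\Emb^+(D^3,S^3),\]
where $D^3\subset S^3$ is a fixed smoothly embedded 3-disc and $\Diff(S^3,D^3)$ denotes those diffeomorphisms that fix $D^3$ pointwise. The base deformation retracts, via affine straightening at the center of the disc together with the tubular neighborhood theorem, onto the bundle of oriented orthonormal frames of $S^3$, which is diffeomorphic to $\SO(4)$; the inclusion $\SO(4)\to\Emb^+(D^3,S^3)$ sending an isometry to its action on a standard disc is a section up to homotopy. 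Thus the theorem reduces to showing that the fiber, which is homotopy equivalent to $\Diff(D^3,\partial D^3)$ by extending by the identity across the complementary disc, is contractible.

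Next I would attack $\Diff(D^3,\partial D^3)\simeq\ast$ via the space $\mathcal{E}$ of smoothly embedded 2-spheres in $S^3$. Let $\mathcal{E}_{\text{std}}\subset\mathcal{E}$ be the subspace of round 2-spheres, which is contractible as it is parameterized by centers and radii. One obtains a fibration $\Diff(S^3,S^2)\longrightarrow\Diff^+(S^3)\longrightarrow\mathcal{E}$, where $\Diff(S^3,S^2)$ denotes diffeomorphisms preserving a chosen round 2-sphere setwise. Combined with the disc decomposition $S^3=D^3\cup_{S^2}D^3$ and parametrised isotopy extension, contractibility of $\mathcal{E}$ would feed back into a further fibration whose fiber can be identified with $\Diff(D^3,\partial D^3)$. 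Closing this chain requires an inductive application of the same principle to lower-dimensional discs together with the Alexander trick at the boundary.

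The main obstacle, and where essentially all of the genuine work lies, is proving that the inclusion $\mathcal{E}_{\text{std}}\hookrightarrow\mathcal{E}$ is a weak equivalence: given a smooth finite-parameter family of embedded 2-spheres in $S^3$, one must produce a canonical, parameter-continuous isotopy to a family of round spheres. The approach of \cite{Hatch83} is to fix a height-type function on $S^3$, study the Morse-theoretic singularities of its restriction to the members of the family, and perform parameterized cut-and-paste surgeries at singular levels to progressively reduce the complexity of the family. The intricate bookkeeping of singularity strata in parameter space, and the verification that the surgeries vary continuously and compatibly across strata, is the source of the genuine difficulty of the Smale conjecture; it is here that I would expect to invoke \cite{Hatch83} as a black box rather than attempt an independent argument.
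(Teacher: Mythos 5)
The paper offers no proof of this statement: it is precisely Hatcher's theorem, used as a black box and cited from \cite{Hatch83}. Your proposal is in substance the same — the preliminary reductions you give (the Palais fibration $\Diff_{D^3}(S^3)\to\Diff^+(S^3)\to\Emb^+(D^3,S^3)$ with base $\simeq\SO(4)$ and fiber $\simeq\Diff_\partial(D^3)$, and the equivalence with the statement that embedded $2$-spheres retract onto the round ones) are standard and correct, while the genuinely hard step, as you acknowledge, is deferred to \cite{Hatch83}, exactly as the paper does.
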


The diffeomorphism groups of these lens spaces are also well understood, since the generalized Smale conjecture holds for this class of 3-manifolds.
This is shown by Hong, Kalliongis, McCullough, and Rubinstein in \cite{Hong11}.

\begin{theorem}\label{thm: generalized smale conj}
    For any 3-dimensional lens space $L(m, q)$ with $m>2$, the inclusion of the isometry group into the diffeomorphism group of $L(m, q)$, $\Isom(L(m, q)) \hookrightarrow \Diff(L(m, q))$ is a homotopy equivalence.
\end{theorem}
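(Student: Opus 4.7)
The plan is to reduce the problem to contractibility statements about diffeomorphism groups of solid tori via an analysis of the space of Heegaard tori in $L(m,q)$. Every lens space $L(m,q)$ admits a standard Heegaard torus $T$ (the image of the Clifford torus under $S^3\to L(m,q)$) splitting $L$ into two solid tori $V_1, V_2$, and this $T$ is preserved setwise by $\Isom(L(m,q))_0$. The first goal is to show that the space $\mathcal{T}(L)$ of smoothly embedded Heegaard tori in $L(m,q)$ deformation retracts onto the $\Isom(L)_0$-orbit of $T$. On the level of $\pi_0$ this is the classical uniqueness of Heegaard splittings for lens spaces (which uses the hypothesis $m>2$), but what is needed here is the full parametrised statement, obtained by adapting Hatcher's sweep-out/Cerf-style analysis to the lens space setting.

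Given this, I would consider the restriction fibration
\[ \Diff(L, T) \longrightarrow \Diff(L) \longrightarrow \Emb^{\mathrm{Heeg}}(T, L), \]
where $\Emb^{\mathrm{Heeg}}(T, L)$ denotes smooth embeddings of Heegaard type. By the previous paragraph its homotopy type is determined by $\Isom(L)_0$, and there is a corresponding fibration for $\Isom(L)$. Comparing the two via the five lemma on long exact sequences of homotopy groups, it then suffices to show that the inclusion $\Isom(L,T)\hookrightarrow\Diff(L,T)$ of pairs is a weak equivalence. Cutting along $T$ reduces this further to the assertions that the mapping class group of $T$ acts appropriately and that $\Diff(V_i, \partial V_i)\simeq\ast$ for $i=1,2$, the latter being Hatcher's classical fact that $\Diff(D^2\times S^1,\partial)\simeq\ast$.

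The main obstacle is the first step: producing a genuine deformation retraction, rather than only a bijection on isotopy classes, of the space of Heegaard tori onto the isometric orbit. This calls for a parametrised Cerf-theoretic sweep-out argument together with the careful production of canceling pairs of critical points of height functions on $L$ that are well-behaved in families. An additional subtlety is that different values of $q$ give the Heegaard torus different rotation data under the $\Isom(L)_0$ action, which must be tracked throughout the deformation. This is precisely where the work of Hong--Kalliongis--McCullough--Rubinstein provides the essential new input beyond Hatcher's Smale conjecture for $S^3$, and I would expect the bulk of the technical effort to be concentrated there.
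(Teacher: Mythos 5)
This statement is not proved in the paper at all: it is imported as an external result, cited to Hong--Kalliongis--McCullough--Rubinstein, and everything downstream simply uses it as a black box. Your proposal is therefore not comparable to a paper-internal argument; it is an outline of (roughly) the strategy of the cited work itself, and judged as a proof it has a genuine gap at exactly the decisive point. The step you label as the ``first goal'' --- that the space of Heegaard tori in $L(m,q)$ deformation retracts (in the parametrised sense, not just on $\pi_0$) onto the orbit of the flat torus under $\Isom(L)_0$ --- is essentially the entire content of the theorem. Once that is granted, the fibration comparison $\Diff(L,T)\to\Diff(L)\to\Emb^{\mathrm{Heeg}}(T,L)$, cutting along $T$, and Hatcher's $\Diff(D^2\times S^1,\partial)\simeq\ast$ are standard reductions of comparatively minor difficulty. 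But you do not supply the parametrised sweep-out/Rubinstein--Scharlemann-type argument; you explicitly defer it to Hong--Kalliongis--McCullough--Rubinstein, i.e.\ to the very theorem being proved. So the proposal is circular as a proof, though accurate as a description of where the difficulty lies.

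Two smaller points would also need attention even granting the main step. First, $\Emb^{\mathrm{Heeg}}(T,L)$ (parametrised embeddings) differs from the space of Heegaard tori (unparametrised images) by a $\Diff(T)$-action, and the five-lemma comparison with the isometric fibration has to be set up on one consistent side of this distinction, with the Palais--Cerf local retractility (Theorem \ref{Emb is locally retractile}) invoked to know the restriction maps are fibrations onto the relevant components. Second, ``the mapping class group of $T$ acts appropriately'' conceals real content: one must identify exactly which isotopy classes of diffeomorphisms of $T$ extend over each solid torus and over $L$, and this is where the hypotheses $m>2$ (and the distinctions between $q$, $-q$, $q^{-1}$) enter; it is not a formality that the image of $\Diff(L,T)\to\pi_0\Diff(T)$ agrees with that of $\Isom(L,T)$.
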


McCullough in \cite{mccul00} presents a calculation of $\Isom(L(m, q))$.
He uses the unit quaternion group structure on $S^3$, letting $S^3=\{z_0 + z_1j | z_0,\,z_1\in\mathbb{C}\,s.t.\,|z_0|^2 + |z_1|^2 = 1 \}$ with the convention $zj = j\overline{z}$.  
The isometries are described using the following double covering by $S^3\times S^3$ of $\SO(4)$
\[\begin{tikzcd}[row sep=tiny]
	{F\colon S^3\times S^3} & {\SO(4)} \\
	{(q_1, q_2)} & {(q\mapsto q_1 q q_2^{-1}).}
	\arrow[from=1-1, to=1-2]
	\arrow[maps to, from=2-1, to=2-2]
\end{tikzcd}\]

\begin{enumerate}
    \item Denote $S^1 = \{z_0 \in \mathbb{C}\,|\, |z_0| = 1\} < S^3$ (i.e. the elements with no $j$ term), $\xi_k = e^\frac{2\pi i}{k} \in S^1$, and $C_k = \langle\xi_k\rangle$.
    \item Denote $\Dih(S^1\tilde{\times}S^1) = \langle F(S^1\times S^1), F(j, j)\rangle$ the subgroup of $\SO(4)$. 
    It may be described as the semidirect product $(S^1\tilde{\times}S^1)\rtimes C_2$, where $C_2$ acts by conjugation on each coordinate and $S^1\times S^1 = (S^1\times S^1)/\langle (-1, -1)\rangle$.
\end{enumerate}
The key to his approach lies in the following lemma, the proof of which we leave to the reader.
\begin{lemma}\label{lem: the descenting isometries}
    Let $G<\SO(4)$ be a finite subgroup acting on $S^3$ freely, such that its action is induced by the action of $\SO(4)$. If $M = S^3/G$, then $\Isom^{+}(M) \cong \Norm(G)/G$ where $\Norm(G)$ is the normalizer of $G$ in $\SO(4)$ and $\Isom^{+}(M)$ is the group of orientation preserving isometries of $M$.
\end{lemma}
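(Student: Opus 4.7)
The plan is to exhibit mutually inverse homomorphisms between $\Norm(G)/G$ and $\Isom^{+}(M)$. For the forward direction I would define $\Phi\colon \Norm(G)\to \Isom^{+}(M)$ by sending $n\in \Norm(G)$ to the map $\bar n\colon M\to M$, $Gx\mapsto G(nx)$. This is well-defined because $nG=Gn$, and it is an orientation-preserving isometry because $n\in \SO(4)$ acts as one on $S^3$ and the covering $\pi\colon S^3\to M$ is a Riemannian local isometry that can be oriented compatibly (orientability of $M$ follows from $G<\SO(4)$). Clearly $\Phi$ is a group homomorphism and $G\subseteq \ker\Phi$. The reverse containment I would argue by a standard discreteness trick: if $\bar n=\mathrm{id}_M$ then for every $x\in S^3$ there is some $g_x\in G$ with $nx=g_x x$; since $G$ acts freely and is finite while $S^3$ is connected, $x\mapsto g_x$ is locally constant and hence constant, so $n\in G$.

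For surjectivity I would exploit that $\pi\colon S^3\to M$ is the universal Riemannian cover. Given $\phi\in \Isom^{+}(M)$, lift it to a diffeomorphism $\tilde\phi\colon S^3\to S^3$; because $\pi$ is a local isometry, $\tilde\phi$ is a local, hence global, isometry, so $\tilde\phi\in \Isom(S^3)=\Or(4)$. Orientation preservation of $\phi$ together with the induced orientation on $M$ forces $\tilde\phi\in \SO(4)$. For any $g\in G$, the composition $\tilde\phi\,g\,\tilde\phi^{-1}$ satisfies $\pi\circ (\tilde\phi g\tilde\phi^{-1})=\pi$, so it is a deck transformation of $\pi$, i.e.\ an element of $G$; therefore $\tilde\phi\in \Norm(G)$. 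Since any two lifts of $\phi$ differ by a deck transformation, i.e.\ by left multiplication by an element of $G$, the class $[\tilde\phi]\in \Norm(G)/G$ is well-defined, and sending $\phi\mapsto [\tilde\phi]$ defines a homomorphism.

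A direct check shows the two maps are mutually inverse: a lift of $\bar n$ is $n$ itself, and by construction $\pi\circ\tilde\phi=\phi\circ\pi$ so $\overline{\tilde\phi}=\phi$.

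The only subtle point will be ensuring the lift lands in $\SO(4)$ rather than merely $\Or(4)$, which reduces to a careful bookkeeping of orientations on $S^3$ and on $M$ via the orientation-preserving free action of $G$. Everything else is a routine covering space argument combined with the identification $\Isom(S^3)=\Or(4)$ for the round metric.
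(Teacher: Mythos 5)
Your proof is correct. The paper leaves this lemma to the reader, and your argument---descending elements of $\Norm(G)$ to orientation-preserving isometries of $M$, identifying the kernel via the finiteness/freeness/connectedness argument, and lifting an isometry of $M$ through the Riemannian universal covering $\pi\colon S^3\to M$ to an element of $\Isom(S^3)=\Or(4)$ that normalizes the deck group $G$---is exactly the standard argument intended; the orientation bookkeeping you flag is unproblematic, since $G<\SO(4)$ acts orientation-preservingly, so $M$ carries the orientation making $\pi$ orientation-preserving, and a lift lies in $\SO(4)$ precisely when the isometry downstairs preserves orientation.
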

In our case the $C_m$ action which we quotient $S^3$ by to gain $L(m, q)$ is described as the subgroup of $\SO(4)$ generated by $F(\xi_{2m}^{q+1}, \xi_{2m}^{q-1})$.
\begin{definition}
    A \textit{generic lens space} is a 3-dimensional lens space $L(m, q)$ such that $m>2$, $1<q<\frac{m}{2}$, and $q^2\not\equiv \pm 1 \mod m$.
\end{definition}
It is an important fact for us that generic lens spaces do not admit orientation reversing homeomorphisms, this comes from \cite[Proposition 1.1]{mccul00}.

Based on $m$ and $q$ the isometry group $\Isom(L(m, q))$ may be one of $8$ group and all generic lens spaces have isometry groups isomorphic to $\Dih(S^1\tilde{\times}S^1)/\langle F(\xi_{2m}^{q+1}, \xi_{2m}^{q-1})\rangle$.
Generic lens spaces are generic in the sense that given $m$, the ratio of possible choices of $1\leq q\leq m$ yielding  \[\Isom(L(m, q)) \cong \Dih(S^1\tilde{\times}S^1)/\langle F(\xi_{2m}^{q+1}, \xi_{2m}^{q-1})\rangle\] to $m$ tends to $1$ as $m$ tends to infinity.
\subsection{Fiber sequences of diffeomorphism groups}
Let us fix some notation for different subgroups of the diffeomorphism group of a manifold.
We always allow manifolds to have boundary.
\begin{definition}\label{def: diffeo groups notation}
    Let $M$ be a 3-manifolds, $V$ a manifold, and $U\subseteq M$ a submanifold.
    \begin{enumerate}
        \item $\Emb(V, M)\subseteq C^\infty(V, M)$ is the subset consisting of the embeddings of $V$ into $M$.
        \item $\Diff_\partial (M) = \{\varphi \in \Diff(M) \,|\, \forall x \in \partial M,\, \varphi(x) = x\}$.
        \item $\Diff_U(M) = \{\varphi \in \Diff(M) \,|\, \forall x \in U,\, \varphi(x) = x\}$.
        \item $\Diff(M, U) = \{\varphi \in \Diff(M) \,|\, \varphi(U) = U\}$.
        \item We often assume a Riemannian metric on $M$ and denote the group of isometries of $M$ by $\Isom(M)$.
    \end{enumerate}
    For all the groups $G$ above, we use the notation $G^+$ to denote the subset consisting of only orientation preserving maps, in case $M$ and $V$ are orientable, and if $V$ is codimension one we use the notation $\Emb^+(V, M)$ for orientation preserving embeddings.
    Furthermore, for all topological groups $G$ we will denote by $G_0$ the path component of the identity in $G$.
\end{definition}
To derive our fiber sequences we will rely on the notion of local retractileness defined as in \cite{Canter17}.
\begin{definition}
    Let $G$ be a topological group. 
    A \textit{$G$-locally retractile} space $X$ is a topological space with a continuous $G$-action, such that for all $x\in X$ there exists an open neighborhood $U\subseteq X$ of $x$ and a map $\xi\colon U \to G$, such that for all $y\in U$, $y = \xi(y).x$. 
    In this situation $\xi$ is a \textit{$G$-local retraction around $x$}.
\end{definition}
In this case locally $X$ is a retract of $G$, but a $G$-local retraction around $x$ is in fact a local section of the map $G\to X$ sending $g$ to $g.x$.
\begin{example}\label{eg: S^3 is SO(4) locally retractile}
    $S^3$ is an $\SO(4)$-locally retractile space.
    Given some base-point $q_0\in S^3$ we can write down an $\SO(4)$-local retraction around $q_0$ via $\xi\colon S^3\to \SO(4)$ with $\xi(q) = F(q, q_0)$.
\end{example}
From now on, we will always assume that actions of topological groups are continuous.
The following is a combination of lemmas from \cite[Lemma 2.4, 2.5, 2.6]{Canter17} except for point (4) which follows by choosing some path between points and then covering it by a finite number of opens and applying local retractileness.
\begin{lemma} \label{local retractileness}
    Let $G$ be a topological group and $E$ and $X$ spaces with a $G$-action, and let $f\colon E \to X$ be a $G$-equivariant map. 
    \begin{enumerate}[(1)]
        \item  If $X$ is $G$-locally retractile, then $f$ is a locally trivial fibration.
        \item If $f$ has local sections and $E$ is $G$-locally retractile, then $X$ is also $G$-locally retractile.
        \item Let $X$ be locally path connected and $G$-locally retractile. If $H<G$ is a subgroup containing the path component of the identity, then $X$ is also $H$-locally retractile.
        \item If $X$ is path connected and $G$-locally retractile, then the action of $G$ is transitive.
    \end{enumerate}
\end{lemma}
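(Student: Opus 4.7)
The plan is to handle the four parts separately, in each case extracting the required object directly from the local retraction data on $X$ or $E$.

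For (1), fix $x \in X$ and a retraction $\xi \colon U \to G$ around $x$. I would check that the map
\[\Phi \colon f^{-1}(U) \longrightarrow U \times f^{-1}(x), \quad e \longmapsto \bigl(f(e),\, \xi(f(e))^{-1}.e\bigr)\]
is a homeomorphism with inverse $(y, p) \mapsto \xi(y).p$; $G$-equivariance of $f$ together with the defining property $\xi(y).x = y$ ensures both maps land where claimed and that the compositions collapse. For (2), given $x \in X$, pick a local section $s \colon V \to E$ with $s(x) = e_0$ and a $G$-retraction $\xi \colon U \to G$ of $E$ around $e_0$. Then $W := V \cap s^{-1}(U)$ is an open neighborhood of $x$, and $\eta := \xi \circ s \colon W \to G$ satisfies
\[\eta(y).x \;=\; \xi(s(y)).f(e_0) \;=\; f\bigl(\xi(s(y)).e_0\bigr) \;=\; f(s(y)) \;=\; y\]
by equivariance of $f$, giving the required retraction.

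For (3), given $\xi \colon U \to G$ around $x$, I would first shrink $U$ to be path connected using local path connectedness of $X$. Set $g_0 := \xi(x)$, which stabilizes $x$. Because $\xi(U)$ is a path connected subset of $G$ containing $g_0$, it lies inside the path component $G_0 g_0 = g_0 G_0$ (using that $G_0 \triangleleft G$ is normal). Hence the corrected map $\eta(y) := \xi(y) g_0^{-1}$ takes values in $G_0 \subseteq H$, and one still has $\eta(y).x = \xi(y).(g_0^{-1}.x) = \xi(y).x = y$ since $g_0^{-1}$ also stabilizes $x$. For (4), the observation is that every retraction neighborhood $U$ around $x$ satisfies $U \subseteq G.x$, so each $G$-orbit is open in $X$; since the orbits partition the path connected $X$, there is only one orbit, and the action is transitive.

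The main subtlety will be (3): ensuring the corrected retraction $\eta$ lands in $H$, which hinges on $\xi(U)$ being contained in a single path component of $G$ (hence the shrink to a path connected $U$) together with normality of $G_0$. Parts (1), (2), and (4) are then essentially unpacking of the definitions once the candidate trivialization $\Phi$, the composite section $\xi \circ s$, and the orbit-openness observation have been identified.
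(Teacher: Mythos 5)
Your proofs are correct: the trivialization $\Phi(e) = (f(e), \xi(f(e))^{-1}.e)$ in (1), the composite retraction $\xi\circ s$ in (2), the translation correction $\eta(y)=\xi(y)g_0^{-1}$ in (3) (which indeed lands in $G_0\subseteq H$ once $U$ is shrunk to be path connected, since the path component of $g_0$ is $G_0g_0$), and the open-orbit argument in (4) all check out. The paper itself does not write out proofs of (1)--(3): it cites them from Cantero--Randal-Williams, and your direct verifications are essentially the standard arguments given there, so nothing is lost. The one place where your route genuinely differs is (4): the paper's sketch joins two points by a path, covers the path by finitely many retraction neighborhoods, and chains together group elements moving one point to the next, whereas you observe that every retraction neighborhood of a point lies in its orbit, so orbits are open and a connected space admits only one; your version is slightly cleaner in that it avoids the compactness/finite-subcover step and only uses connectedness (implied by path connectedness), while the paper's version has the mild advantage of exhibiting an explicit group element as a finite product along the path. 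Either argument suffices for the way the lemma is used later.
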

The following theorem proved by Lima in \cite{Lim64}, originally due to Palais and Cerf, implies that $\Emb(V, M)$ is $\Diff(M)$-locally retractile in case $V$ is compact, where the action on $\Emb(V, \interior{M})$ is given by post-composition.
\begin{theorem}\label{Emb is locally retractile}
    Let $M$ be a $C^\infty$-manifold, and $V\subseteq \interior{M}$ a compact submanifold. The space $\Emb(V, \interior{M})$ is $\Diff(M)$-locally retractile.
\end{theorem}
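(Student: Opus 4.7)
The plan is, for a given embedding $e_0 \in \Emb(V, \interior{M})$ (which we may take to be the inclusion), to produce an open neighborhood $U$ of $e_0$ together with a continuous map $\xi\colon U \to \Diff(M)$ satisfying $\xi(e) \circ e_0 = e$ for all $e \in U$. Such a section of the orbit map $\Diff(M) \to \Emb(V, \interior{M})$, $\varphi \mapsto \varphi \circ e_0$, is exactly what local retractileness demands.

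The strategy is the classical Palais--Cerf isotopy extension argument, performed continuously in the embedding parameter. First, fix an auxiliary Riemannian metric on $\interior{M}$ and consider the exponential map of the normal bundle $\nu \to e_0(V)$, restricted to a disc subbundle $\nu_\epsilon$ of radius $\epsilon > 0$ chosen small enough that $\exp\colon \nu_\epsilon \hookrightarrow \interior{M}$ is a tubular neighborhood embedding. For $e$ in a sufficiently small $C^1$-neighborhood $U$ of $e_0$, the image $e(V)$ lies in $\exp(\nu_\epsilon)$ and the composition of $e$ with the projection $\exp(\nu_\epsilon) \to e_0(V)$ is a diffeomorphism. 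Consequently $e$ is the graph of a unique section $s_e\colon V \to \nu_\epsilon$, and the assignment $e \mapsto s_e$ is continuous in the $C^\infty$-topology.

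Next, I would define a fiberwise isotopy $h_t^e(v) := \exp(t \cdot s_e(v))$ for $t \in [0,1]$, sweeping $e_0$ to $e$ through embeddings. Differentiating in $t$ gives a time-dependent vector field along the swept image $h_t^e(V)$, which, by means of a fixed bump function compactly supported inside $\exp(\nu_\epsilon)$ and equal to $1$ near $e_0(V)$, extends to a compactly supported time-dependent vector field $X_t^e$ on $\interior{M}$. Integrating $X_t^e$ produces a path $\varphi_t^e \in \Diff(M)$ with $\varphi_0^e = \mathrm{id}$; setting $\xi(e) := \varphi_1^e$ yields $\xi(e) \circ e_0 = e$ by construction.

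The main obstacle is verifying continuity of $\xi$ in the $C^\infty$-topology. Each ingredient --- extraction of the section $s_e$, the bump-function extension, and the flow of $X_t^e$ --- must be shown to be $C^\infty$-continuous in $e$; the last is the most subtle and reduces to smooth dependence of ODE solutions on parameters, applied inductively in each order of differentiation of $e$. A secondary concern is ensuring that $U$ is simultaneously small enough for the tubular neighborhood picture and the integration to remain valid, but these are open conditions in $\Emb(V, \interior{M})$. Once the continuity is established, the conclusion follows directly from the definition of $\Diff(M)$-local retractileness.
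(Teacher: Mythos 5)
The paper itself does not prove this statement --- it quotes it from Lima (after Palais and Cerf) --- so your proposal has to be judged against the standard isotopy-extension argument, which is indeed the route you sketch. There is, however, a genuine gap at the very step that produces $\xi(e)$: the normal-graph description recovers only the \emph{image} of $e$, not $e$ itself. If $\pi\colon \exp(\nu_\epsilon)\to e_0(V)$ denotes the tubular projection, then for $e$ close to $e_0$ the subset $e(V)$ is the graph of a section of $\nu_\epsilon$, but the map $e$ agrees with that graph parametrization only up to the reparametrization $\theta_e = e_0^{-1}\circ\pi\circ e$, a diffeomorphism of $V$ which is close to, but in general different from, the identity. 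Your sweep $h_t^e(v)=\exp(t\,s_e(v))$ moves points only along normal fibres, so the time-one diffeomorphism you integrate satisfies $\xi(e)\circ e_0 = e\circ\theta_e^{-1}$, not $\xi(e)\circ e_0 = e$. As written, you have produced local sections for the $\Diff(M)$-action on the space of unparametrized submanifolds; local retractileness of $\Emb(V,\interior{M})$, which is what the Palais fibre sequence (\ref{eq: Palais fib seq}) requires, demands that the parametrization be matched as well.

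The gap is fixable, but it needs an extra step: either correct for the tangential part (for instance, join $\theta_e$ to $\mathrm{id}_V$ canonically via the exponential map of $V$, extend that isotopy of $e_0(V)$ to an ambient one supported in the tube, and compose it with your normal flow), or interpolate directly in $M$ by $E_t(v)=\exp_{e_0(v)}\bigl(t\,\exp_{e_0(v)}^{-1}(e(v))\bigr)$ and then run a genuinely parametrized isotopy-extension argument. In either version note a second, smaller inaccuracy already present in your radial construction: the velocity field of the isotopy is defined only along the moving image $h_t^e(V)$, so before damping with a bump function you must first extend it to an open neighbourhood (e.g.\ fibrewise in the tube, by pulling back along the projection), continuously in $e$; a fixed bump function alone does not yield the field $X_t^e$ on $\interior{M}$. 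Your remarks on $C^\infty$-continuity of the flow are the right concerns and are standard once these constructions are set up correctly.
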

This provides us with the Palais fiber sequence.
Let $M$ be a $C^\infty$-manifold, $V\subseteq \interior{M}$ a compact submanifold. 
There is a fiber sequence of the form
\begin{equation}\label{eq: Palais fib seq}
    \Diff_V(M) \hookrightarrow \Diff(M) \to \Emb(V, \interior{M}).
\end{equation}
Pulling back the Palais fiber sequence gives the following lemma:
\begin{lemma}\label{submnfld fib seq}
    Given a compact submanifold $V\subseteq \interior{M}$ there is a fiber sequence 
    \[\Diff_V(M)\to \Diff(M, V) \to \Diff(V).\]
    Furthermore, for $\Diff^\prime(V)$ the space of those diffeomorphisms of $V$ that can be extended to a diffeomorphism of $M$ we have that the map $\Diff(M, V)\to \Diff^\prime(V)$ is a $\Diff_V(M)$-principal bundle.
\end{lemma}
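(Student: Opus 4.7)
The plan is to obtain this fiber sequence exactly as the sentence preceding the statement suggests: by pulling back the Palais fiber sequence \eqref{eq: Palais fib seq} along a suitable map into $\Emb(V, \interior{M})$. Let $\iota\colon V\hookrightarrow \interior{M}$ denote the inclusion and consider the continuous map $\Diff(V)\to \Emb(V, \interior{M})$, $\varphi\mapsto \iota\circ \varphi$, whose image is exactly the subspace of embeddings with image $V$.

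First I would form the pullback square
\[\begin{tikzcd}
P \ar[r] \ar[d] & \Diff(M) \ar[d, "\mathrm{res}"] \\
\Diff(V) \ar[r] & \Emb(V, \interior{M})
\end{tikzcd}\]
and identify $P$ explicitly. A pair $(\varphi, f)\in \Diff(V)\times \Diff(M)$ lies in $P$ iff $f|_V = \iota\circ \varphi$; this condition forces $f(V) = V$ and determines $\varphi = f|_V$, so the projection $(\varphi,f)\mapsto f$ identifies $P$ with $\Diff(M, V)$ and the left vertical map with the restriction map $\Diff(M, V)\to \Diff(V)$. By construction, the image of this restriction is precisely $\Diff'(V)$.

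Next I would upgrade the pullback to a principal bundle. Theorem \ref{Emb is locally retractile} gives $\Diff(M)$-local retractileness of $\Emb(V, \interior{M})$, so Lemma \ref{local retractileness}(1) applied to the $\Diff(M)$-equivariant restriction map $\Diff(M)\to \Emb(V, \interior{M})$ shows that this map is a locally trivial fibration. In fact it is a principal $\Diff_V(M)$-bundle onto its image, under the right precomposition action of $\Diff_V(M)$ on $\Diff(M)$: the action is free, and it is transitive on fibers because $f_1|_V = f_2|_V$ implies $f_1^{-1}\circ f_2 \in \Diff_V(M)$. Since principal bundles are stable under pullback, the induced map $\Diff(M, V)\to \Diff'(V)$ is a principal $\Diff_V(M)$-bundle, which is exactly the second assertion of the lemma; taking the fiber over the basepoint $\mathrm{id}_V\in \Diff(V)$ then yields the fiber sequence in the first assertion.

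I do not expect a real obstacle: the argument is formal once the Palais fiber sequence and local retractileness are available. The only piece of bookkeeping to watch is the distinction between $\Diff(V)$ and its subspace $\Diff'(V)\subseteq \Diff(V)$, since the principal bundle is only surjective onto $\Diff'(V)$, while the stated fiber sequence has base $\Diff(V)$. This is harmless because the fiber over any point in $\Diff(V)\setminus \Diff'(V)$ is empty and plays no role in the homotopy fiber over the identity.
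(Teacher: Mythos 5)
Your proposal is correct and follows exactly the route the paper intends: the paper gives no separate proof, simply asserting that the lemma is obtained by pulling back the Palais fiber sequence \eqref{eq: Palais fib seq}, and your argument fills in precisely that pullback, the identification of the pullback with $\Diff(M,V)$, and the principal-bundle structure via local retractileness. Your closing remark about $\Diff'(V)$ versus $\Diff(V)$ (empty fibers over non-extendable diffeomorphisms) is the right bookkeeping and matches the paper's formulation.
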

The last point about the map $\Diff(M, V)\to \Diff^\prime(V)$ being a $\Diff_V(M)$-principal bundle is especially useful when considering in tandem with the following lemma from \cite[Corollary 2.11 (2)]{bonat20}.
\begin{lemma}\label{ses delooped}
    For $i = 1, 2, 3$ let $G_i$ be a topological group and and $S_i$ a space with a $G_i$-action. Let $1\to G_1\to G_2 \overset{\phi}{\to}G_3\to 1$ be a short exact sequence of groups such that $\phi$ is a $G_1$-principal bundle.
    If $S_1\to S_2\to S_3$ is a fiber sequence of equivariant maps, then the induced maps on quotients form a homotopy fiber sequence
        \[S_1\hq G_1 \to S_2\hq G_2 \to S_3\hq G_3.\]
\end{lemma}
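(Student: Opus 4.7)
The plan is to argue via a $3\times 3$-diagram that compares the Borel fibrations $S_i\to S_i\hq G_i\to BG_i$ for $i=1,2,3$. I set up the commutative diagram
\[\begin{tikzcd}
S_1 \ar[r] \ar[d] & S_2 \ar[r] \ar[d] & S_3 \ar[d] \\
S_1\hq G_1 \ar[r] \ar[d] & S_2\hq G_2 \ar[r] \ar[d] & S_3\hq G_3 \ar[d] \\
BG_1 \ar[r] & BG_2 \ar[r] & BG_3,
\end{tikzcd}\]
in which the three columns are the Borel fibrations (hence homotopy fiber sequences by construction) and the top row is a homotopy fiber sequence by hypothesis. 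The aim is then to verify that the bottom row is a homotopy fiber sequence, and to conclude via a $3\times 3$-lemma that the middle row must be as well.

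The preliminary and main step is the bottom row. The hypothesis that $\phi\colon G_2\to G_3$ is a principal $G_1$-bundle promotes the algebraic short exact sequence $1\to G_1\to G_2\to G_3\to 1$ to a homotopy fiber sequence of topological groups $G_1\to G_2\to G_3$. Delooping it, using that each $G_i$ is grouplike $E_1$ so that $\Omega BG_i\simeq G_i$, yields the homotopy fiber sequence $BG_1\to BG_2\to BG_3$. Concretely, one may choose compatible models with $EG_2\to EG_3$ itself a principal $G_1$-bundle, so that $BG_2\to BG_3$ is realized as an honest fibration with fiber $BG_1$.

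With the outer rows and all columns established as homotopy fiber sequences, I invoke the $3\times 3$-lemma for pointed homotopy fiber sequences: in such a commutative diagram, if all three columns and two of the three rows are homotopy fiber sequences, so is the third. This can be verified directly by applying the five lemma to the induced long exact sequences in $\pi_\ast$ coming from the columns and outer rows. Applied to the diagram above, this yields the desired homotopy fiber sequence $S_1\hq G_1\to S_2\hq G_2\to S_3\hq G_3$.

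I expect the main obstacle to be the delooping step: one must be careful that the principal bundle assumption is precisely what guarantees that the strict quotient $G_3\cong G_2/G_1$ agrees with the homotopy quotient, so that $BG_2\to BG_3$ has homotopy fiber $BG_1$ rather than some more complicated space; if $\phi$ were only a group epimorphism without the local triviality, this identification could fail.
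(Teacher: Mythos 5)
The paper itself offers no argument for Lemma \ref{ses delooped}; it is imported from \cite{bonat20}. So your proposal is a from-scratch proof, and its architecture --- columns the Borel fibrations $S_i\to S_i\hq G_i\to BG_i$, bottom row obtained by delooping the principal short exact sequence, middle row deduced by a fiber-juggling argument --- is the standard way to prove the statement and is the right approach. Two of your justifications, however, do not hold up as written. First, the blanket ``$3\times 3$-lemma'' you invoke (all three columns and \emph{any} two rows homotopy fiber sequences $\Rightarrow$ the third row is) is false: take the columns $\ast\to\ast\to\ast$, $\ast\to\ast\to\ast$, $G\to EG\to BG$ with $G$ a nontrivial discrete group; the top row $\ast\to\ast\to G$ and middle row $\ast\to\ast\to EG$ are homotopy fiber sequences, but the bottom row $\ast\to\ast\to BG$ is not, since $\Omega BG\simeq G$. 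The direction you actually use (top and bottom rows plus columns imply the middle row) is true, but it is not obtained by ``applying the five lemma to the long exact sequences coming from the columns and outer rows'': there is no long exact sequence for the middle row until you have produced a comparison map. The honest argument sets $H=\mathrm{hofib}(S_2\hq G_2\to S_3\hq G_3)$, uses the interchange of horizontal and vertical homotopy fibers in the square formed by the middle and bottom rows to produce a fiber sequence $S_1\simeq\mathrm{hofib}(S_2\to S_3)\to H\to \mathrm{hofib}(BG_2\to BG_3)\simeq BG_1$, then constructs the canonical map $S_1\hq G_1\to H$ (using that the composite $S_1\hq G_1\to S_3\hq G_3$ is canonically null, since $G_1\to G_3$ is trivial and $S_1\to S_3$ may be taken constant), checks it covers the equivalence on $BG_1$ and restricts to the equivalence on the fibers $S_1$, and only then applies the five lemma to conclude that $S_1\hq G_1\to H$ is a weak equivalence.

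Second, your concrete model for the bottom row is incorrect: $EG_2\to EG_3$ is not a principal $G_1$-bundle in general (take $G_3=1$, so $G_1=G_2$; then $EG_2\to\ast$ would have to be a trivial $G_2$-bundle, forcing $EG_2\cong G_2$). The standard construction that does use the hypothesis on $\phi$ is: $G_1$ acts freely on $EG_2$, so $EG_2/G_1$ is a model for $BG_1$, and local sections of $\phi$ make $EG_2/G_1\to EG_2/G_2=BG_2$ a principal $G_3$-bundle whose classifying map is homotopic to $B\phi$; hence $BG_1\simeq EG_2/G_1\simeq\mathrm{hofib}(B\phi)$. Your closing remark is exactly on target --- local triviality of $\phi$ is precisely what prevents the homotopy fiber of $B\phi$ from being larger than $BG_1$ --- and with the two repairs above your proof goes through.
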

We will use two special cases of this lemma, both of them are well-known results, one is the case where $S_1=S_2=S_3=\text{pt}$, which allows us to deloop the short exact sequence of groups into a homotopy fiber sequence $BG_1\to BG_2\to BG_3$, the second is where $S_1 = S_2 = X$, $S_3= \text{pt}$ and $G_1 = 1$, $G_2=G_3 = G$, which gives for all $G$-spaces $X$ a homotopy fiber sequence $X\to X\hq G \to BG$.
\begin{remark}
    Let $1\to G_1\to G_2 \overset{p}{\to}G_3\to 1$ be a short exact sequence of topological groups. 
    $G_3$ is a $G_2$-locally retractile space with respect to the induced action from $p$, if and only if $p$ is a $G_1$-principal bundle.
    In this case we call the short exact sequence a principal short exact sequence.
\end{remark}

Cerf in \cite{Cerf61} showed the contractibility of collars, the following formulation of it comes from \cite[Theorem 2.6]{jan24}.
\begin{theorem}\label{contractable collars}
    The space of collars
    \[\Emb_{\partial M}(\partial M \times I, M) = \{\iota \in \Emb(\partial M \times I, M) \,|\, \left.\iota\right|_{\partial M} = \text{id}_{\partial M}\}\]
    is weakly contractible, where $\partial M \times I$ is a tubular neighborhood of $\partial M$.
    
    As a consequence we have that the subgroup inclusion \[\Diff_U(M)\hookrightarrow\Diff_{\partial U}(M\setminus \interior{U})\]
    is a weak equivalence for a codimension 0 submanifold $U\subseteq \interior{M}$.
\end{theorem}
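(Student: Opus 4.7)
The theorem has two parts: contractibility of $\mathcal{C} := \Emb_{\partial M}(\partial M \times I, M)$, and the consequence for diffeomorphism groups. For Part~1, the plan is to show $\mathcal{C}$ is weakly equivalent to the space $\mathcal{V}^+$ of smooth inward-pointing nowhere-vanishing sections of the normal bundle $TM|_{\partial M}/T\partial M$. The space $\mathcal{V}^+$ is contractible since the inward-pointing vectors at each point of $\partial M$ form a convex half-space, making $\mathcal{V}^+$ a convex space of sections. The equivalence is given by the normal-derivative map $e\colon \mathcal{C}\to \mathcal{V}^+$, $\iota\mapsto d\iota_{(\cdot,0)}(\partial_t)$. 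A homotopy inverse is constructed by extending a given vector field to a neighborhood of $\partial M$ and integrating its flow for a short time to produce a germ, then extending the germ to a full collar via a cutoff. The fibers of $e$ are shown contractible by the rescaling isotopy $\iota_s(x, t) := \iota(x, st)$ for $s \in (0,1]$, which deforms any collar to agree with the flow-extension of its germ.

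\medskip

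For Part~2, consider the restriction map $\rho\colon \Diff_U(M) \to \Diff_{\partial U}(M\setminus \interior{U})$, which is injective since a diffeomorphism of $M$ that is the identity on $U$ is determined by its restriction to $M\setminus \interior{U}$. I would construct a homotopy inverse using a fixed collar $c_0\colon \partial U\times I \to M\setminus \interior{U}$ and a smooth cutoff $\chi\colon I \to I$ with $\chi\equiv 0$ near $0$ and $\chi\equiv 1$ near $1$. Given $\psi \in \Diff_{\partial U}(M\setminus \interior{U})$, express it in the collar coordinates of $c_0$ and interpolate with the identity using $\chi$ so that the modified $\tilde\psi$ equals the identity on a sub-collar of $\partial U$; then $\tilde\psi$ extends smoothly by the identity on $U$ to yield an element $s(\psi)\in \Diff_U(M)$. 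Part~1 guarantees this construction is canonical up to homotopy, independently of $c_0$, and straight-line homotopies in the parameter $\chi$ show $\rho\circ s \simeq \text{id}$ and $s\circ \rho\simeq \text{id}$.

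\medskip

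The main obstacle lies in Part~1: one must preserve the embedding condition throughout the rescaling interpolation and the flow-extension construction. In particular, $d\iota_s(\partial_t)$ must stay inward-pointing and nowhere zero for every $s\in (0,1]$, and the flow of the extended vector field must remain an embedding on a uniform time interval. Compactness of $\partial M$ together with openness of the embedding condition in the $C^\infty$-topology is what makes the uniform bounds available. Once Part~1 is in hand, Part~2 reduces to a relatively standard cutoff-and-gluing argument, with the homotopy-canonicity of the modification procedure supplied by the contractibility result.
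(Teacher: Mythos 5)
The paper does not actually prove this statement: it is imported, with the first part attributed to Cerf \cite{Cerf61} and the formulation (including the consequence for $\Diff_U(M)\hookrightarrow\Diff_{\partial U}(M\setminus\interior{U})$) taken from \cite[Theorem 2.6]{jan24}. So the comparison can only be with the standard argument, and against that your sketch has two genuine gaps. In Part 1, the proposed contraction of the fibers of $e\colon \mathcal{C}\to\mathcal{V}^+$ by $\iota_s(x,t)=\iota(x,st)$ does not stay in a fiber: the normal derivative of $\iota_s$ is $s\cdot d\iota_{(\cdot,0)}(\partial_t)$, so rescaling moves you to a different fiber, and in any case contractibility of fibers only yields a weak equivalence if you also know $e$ is a (quasi-)fibration, which you have not addressed. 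The standard repair is to first shrink a compact family of collars into the image of one fixed collar $c_0$, rewrite everything in the product coordinates of $c_0$ (so a collar becomes $j=(j_1,j_2)\colon\partial M\times I\to\partial M\times[0,1)$ with $j(x,0)=(x,0)$), and then use the renormalized rescaling $j^s(x,t)=\bigl(j_1(x,st),\,s^{-1}j_2(x,st)\bigr)$, which converges in $C^\infty$ as $s\to 0$ to the linearization $(x,t)\mapsto(x,\,t\,\partial_t j_2(x,0))$; one then finishes by convexity of the positive functions $\partial_t j_2(\cdot,0)$. This is where your ``divide by the flow time'' idea has to enter concretely; without the division the limit degenerates and the argument as written fails.

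In Part 2 the construction of the claimed homotopy inverse is not justified: interpolating $\psi$ with the identity in collar coordinates by a cutoff $\chi$ (and likewise the asserted ``straight-line homotopies'' proving $\rho\circ s\simeq\mathrm{id}$ and $s\circ\rho\simeq\mathrm{id}$) need not produce diffeomorphisms, since linear interpolation can destroy injectivity and nondegeneracy of the differential; note also that the image of $\rho$ consists only of those $\psi$ whose $\infty$-jet along $\partial U$ agrees with the identity, so some genuine straightening statement is unavoidable. The standard deduction, and the one the cited source uses, avoids constructing an inverse altogether: let both $\Diff_U(M)$ and $\Diff_{\partial U}(M\setminus\interior{U})$ act by postcomposition on the weakly contractible space of collars of $\partial U$ in $M\setminus\interior{U}$; parametrized isotopy extension makes the orbit maps fibrations, the stabilizer of a fixed collar $c_0$ is in both cases (canonically isomorphic to) the group of diffeomorphisms of $M\setminus\interior{U}$ fixing $c_0(\partial U\times I)$ pointwise, so both subgroup inclusions from that common stabilizer are weak equivalences, and the claim follows by two-out-of-three. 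If you replace your cutoff construction by this orbit--stabilizer argument, and repair Part 1 as above, the proof goes through; as written, both halves have real gaps.
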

The next lemma, a consequence of the \textit{homotopical orbit stabilizer lemma}, \cite[Lemma 2.10]{jan24} .
\begin{lemma}\label{lem: id path component homotopical orbit stabilizer}
    Let $X$ be a path connected $G$-locally retractile space such that the $G$ action on $X$ is transitive, and let $x\in X$.
    Consider the inclusion $\{x\}\hookrightarrow X$, this is equivariant with respect to $\Stab_G(x)_0\hookrightarrow G_0$,
    where $G_0 \triangleleft G$ is the path component of the identity in $G$ and $\Stab_G(x) < G$ is the stabilizer group of $x$ in $G$.
    
    If the inclusion of $\Stab_G(x)$ into $G$ induces a bijection on path components, then the equivariant inclusion of $x$ into $X$ induces a weak equivalence, in fact a homeomorphism for the right models of the classifying spaces,
    \[B\Stab_G(x)_0 \overset{\simeq}{\to}X\hq G_0.\]
    Moreover, there is a homotopy fiber sequence
    \[X\to B \Stab_G(x)_0 \to BG_0.\]
\end{lemma}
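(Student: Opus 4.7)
The plan is to lift the homotopical orbit stabilizer equivalence $B\Stab_G(x)\simeq X\hq G$ (which follows from \cite[Lemma 2.10]{jan24} under transitivity and $G$-local retractileness) down to the identity components via a principal short exact sequence comparison.

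First I would note that both quotient maps $G\to \pi_0 G$ and $\Stab_G(x)\to \pi_0\Stab_G(x)$ are principal in the sense of the remark preceding the lemma: $\pi_0 G$ is discrete, hence trivially $G$-locally retractile (choose a representative in each coset), and similarly for the stabilizer. Applying Lemma \ref{ses delooped} to these two sequences --- once with $S_1 = S_2 = X$ and $S_3 = \mathrm{pt}$, once with all $S_i = \mathrm{pt}$ --- together with the equivariant inclusion $\{x\}\hookrightarrow X$ over $\Stab_G(x)\hookrightarrow G$, produces a commuting diagram of homotopy fiber sequences
\[
\begin{tikzcd}
B\Stab_G(x)_0 \ar[r] \ar[d] & B\Stab_G(x) \ar[r] \ar[d] & B\pi_0\Stab_G(x) \ar[d] \\
X\hq G_0 \ar[r] & X\hq G \ar[r] & B\pi_0 G.
\end{tikzcd}
\]
The middle vertical map is a weak equivalence by the homotopical orbit stabilizer lemma, and the right vertical map is a weak equivalence because it is $B$ applied to a bijection of discrete groups. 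A five-lemma argument on the long exact sequences of homotopy groups of the two rows then forces the leftmost vertical map $B\Stab_G(x)_0 \to X\hq G_0$ to be a weak equivalence as well. Promotion to an actual homeomorphism is achieved by using the two-sided bar construction as the model for classifying spaces throughout.

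For the concluding homotopy fiber sequence $X\to B\Stab_G(x)_0\to BG_0$, a further application of Lemma \ref{ses delooped} --- to the trivial sequence with $G_1 = 1$, $G_2 = G_3 = G_0$ and spaces $S_1 = S_2 = X$, $S_3 = \mathrm{pt}$ --- yields the standard Borel fibration $X \to X\hq G_0 \to BG_0$, and substituting via the weak equivalence just established finishes the argument. The main subtlety I anticipate is the five-lemma step at low degrees, where the long exact sequence degenerates into pointed sets rather than groups. However, since $B\pi_0 G$ is a $K(\pi_0 G,1)$, the relevant portion of the sequence becomes an honest sequence of groups from $\pi_1$ onward; together with the observation that $X \hq G_0$ is connected (as $X$ is) this keeps the comparison clean.
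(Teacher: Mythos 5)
Your argument is correct in its homotopy-theoretic core, but it takes a genuinely different route from the paper. The paper never compares the two delooped component sequences over $B\pi_0$: it applies the homotopical orbit--stabilizer lemma of \cite{jan24} directly to the $G_0$-action on $X$ (via Lemma \ref{local retractileness} (3) and (4), $X$ is $G_0$-locally retractile and $G_0$ still acts transitively), which gives $X\hq G_0\simeq B\Stab_{G_0}(x)$; the entire remaining content is the group-theoretic identification $\Stab_{G_0}(x)=G_0\cap\Stab_G(x)=\Stab_G(x)_0$, and this is exactly what the hypothesis that $\Stab_G(x)\hookrightarrow G$ is a bijection on $\pi_0$ supplies. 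That identification also yields the homeomorphism statement for free, since in the bar model $X\hq G_0$ is literally $EG_0/\Stab_{G_0}(x)=B\Stab_{G_0}(x)$. Your route instead feeds the full-group equivalence $B\Stab_G(x)\simeq X\hq G$ into a map of fibration sequences over $B\pi_0\Stab_G(x)\to B\pi_0 G$ and runs the five lemma; what this buys is that you never have to address transitivity or local retractileness of the restricted $G_0$-action, and the low-degree bookkeeping you describe (connectivity of the fibers, the sequence becoming one of groups from $\pi_1$ on) is handled correctly. The concluding fibration $X\to X\hq G_0\to BG_0$ is obtained the same way in both arguments.

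Two caveats. First, your appeal to Lemma \ref{ses delooped} requires the sequences $1\to G_0\to G\to\pi_0 G\to 1$ and $1\to\Stab_G(x)_0\to\Stab_G(x)\to\pi_0\Stab_G(x)\to 1$ to be principal, and your justification (``$\pi_0 G$ is discrete'') amounts to assuming that $G_0$ and $\Stab_G(x)_0$ are open subgroups. This holds for every group the lemma is applied to in the paper (Lie groups and $C^\infty$ diffeomorphism groups are locally contractible), but it is not a consequence of the stated hypotheses, so it should be recorded as an extra standing assumption rather than asserted. Second, the five-lemma comparison only produces a weak equivalence; the parenthetical claim that the map is a homeomorphism for the right models does not follow from ``using the bar construction throughout,'' but rather from the explicit identification $X\hq G_0\cong EG_0/\Stab_{G_0}(x)$ together with $\Stab_{G_0}(x)=\Stab_G(x)_0$ --- that is, essentially from the paper's direct argument.
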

\begin{proof}
    By Lemma \cite[Lemma 2.10]{jan24}, the map
    \[\begin{tikzcd}[cramped, row sep=small]
    	{\Stab_G(x)} & G \\
    	\{x\} \arrow[loop above, out=120, in=70, distance=15] & X \arrow[loop above, out=120, in=70, distance=15]
    	\arrow[hook, from=1-1, to=1-2]
    	\arrow[hook, from=2-1, to=2-2]
    \end{tikzcd}\]
    induces a weak equivalence $B\Stab_G(x) \overset{\simeq}{\to}X\hq G$, which is in fact a homeomorphism for the right models of the classifying spaces
    We have to see that 
    \[\Stab_{G}(\iota)_0\hookrightarrow\Stab_{G_0}(\iota) = G_0\cap\Stab_{G}(x)\]
    is a surjection.
    The assumption that $\Stab_G(x)\hookrightarrow G$ induces a bijection on path components means that any $g\in \Stab_{G}(x)$ is in $\Stab_{G}(x)_0$ if and only if it is connected to the identity in $G$, i.e. is in $G_0$.
\end{proof}

\begin{theorem} \label{embeddings of discs are framings}
    If $M$ is an $m$-dimensional manifold, then the differential at $0$ gives a weak equivalence $\Emb(D^m, M)\overset{\simeq}{\to}\Fr(TM)$.
\end{theorem}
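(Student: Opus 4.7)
The map $d_0 \colon \Emb(D^m, M) \to \Fr(TM)$, $\iota \mapsto (\iota(0), d\iota_0)$, sits over $M$ via evaluation at $0$ on the source and the frame bundle projection on the target, and commutes with these projections. The plan is to identify both sides as fiber bundles over $M$, and then reduce the weak equivalence claim to a fiberwise statement over a single point, which I will verify by writing down an explicit homotopy inverse.

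For the bundle structure: applying Theorem \ref{Emb is locally retractile} with $V = \{0\}$ shows that $M = \Emb(\{0\}, M)$ is $\Diff(M)$-locally retractile, and with $V = D^m$ it shows $\Emb(D^m, M)$ is as well. The evaluation at $0$ is $\Diff(M)$-equivariant, so by Lemma \ref{local retractileness}(1) it is a locally trivial fibration. The frame bundle $\Fr(TM) \to M$ is standard. Applying the long exact sequence of homotopy groups together with the five lemma then reduces the statement to showing that for each $p \in M$,
\[ d_0|_p \colon \Emb_p(D^m, M) \longrightarrow \Fr(T_pM) \cong \GL_m(\mathbb{R}) \]
is a weak equivalence.

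For this, fix a Riemannian metric on $M$ and pick $\epsilon > 0$ small enough that $\exp_p$ is a diffeomorphism from the closed $\epsilon$-ball in $T_pM$ onto its image. Define $s \colon \Fr(T_pM) \to \Emb_p(D^m, M)$ by $s(F)(x) = \exp_p(\epsilon F(x))$; then $d_0 \circ s(F) = \epsilon F$, which is homotopic to $F$ via the straight-line path $t \mapsto ((1-t) + t\epsilon)F$ inside $\GL_m(\mathbb{R})$. For the other composition, use a zoom-in: for $\iota$ whose image lies in the normal neighborhood $U = \exp_p(B_\epsilon(0))$, set $\iota_t(x) = \tfrac{1}{t}\exp_p^{-1}(\iota(tx))$ for $t \in (0,1]$; a Taylor expansion in $\exp_p$-coordinates shows $\iota_t \to d\iota_0$ as $t \to 0$, and re-exponentiating yields a continuous homotopy from $\iota$ to $s(d\iota_0)$ (after a further homotopy absorbing the scalar $\epsilon$).

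The main technical obstacle is that the zoom-in requires $\iota(D^m) \subset U$, which is not automatic. To handle this I would first show that the open inclusion $\Emb_p^U(D^m, M) \hookrightarrow \Emb_p(D^m, M)$, where the superscript $U$ denotes embeddings with image inside $U$, is a weak equivalence. This follows from a compactness argument: any compact family in $\Emb_p(D^m, M)$ can be moved into $\Emb_p^U(D^m, M)$ by the preliminary shrink $\iota \mapsto \iota(r \cdot)$ for $r$ sufficiently small (uniformly over the family), and the analogous argument on null-homotopies gives injectivity on $\pi_n$. Once this reduction is in place, the zoom-in above is globally defined and the fiberwise weak equivalence, hence the theorem, follows.
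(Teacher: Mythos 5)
The paper itself gives no proof of this statement -- it is invoked as a standard fact (essentially due to Cerf and Palais) -- so there is no in-paper argument to compare yours with; I will judge the proposal on its own. Your overall architecture is the standard one and is sound: evaluation at $0$ is a locally trivial fibration by Theorem \ref{Emb is locally retractile} together with Lemma \ref{local retractileness}(1), $\Fr(TM)\to M$ is a fibre bundle, the derivative map covers the identity of $M$, and a map of fibrations over the same base which is a weak equivalence on fibres is a weak equivalence on total spaces; the real content is then the fibrewise claim that $d_0\colon \Emb_p(D^m,M)\to \Fr(T_pM)\cong \GL_m(\mathbb{R})$ is a weak equivalence, which one proves by shrinking and zooming in exponential coordinates, exactly as you outline (including the compactness reduction to embeddings with image in a fixed normal neighbourhood). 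One small caveat: since the paper allows manifolds with boundary, the evaluation map only hits $\interior{M}$, so the fibration comparison should be set up over $\interior{M}$ (for the closed manifolds actually used here this is vacuous).

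Two of your explicit formulas, however, fail as written and need the same repair. First, $s(F)(x)=\exp_p(\epsilon F(x))$ is not defined, or not an embedding, for frames of large operator norm: $\epsilon F(D^m)$ need not lie in the ball on which $\exp_p$ is a diffeomorphism. Replace $\epsilon$ by a continuous norm-dependent scale, e.g. $s(F)(x)=\exp_p\bigl(\tfrac{\epsilon}{1+\lVert F\rVert}F(x)\bigr)$; the composite $d_0\circ s$ is then a positive scalar multiple of the identity up to homotopy, which is all you need. Second, in the zoom-in the rescaled maps $\iota_t(x)=\tfrac1t\exp_p^{-1}(\iota(tx))$ have image of size comparable to $\sup\lVert d(\exp_p^{-1}\circ\iota)\rVert$, which typically exceeds $\epsilon$; hence ``re-exponentiating'' does not keep the homotopy inside the chart, and the intermediate maps need not be embeddings of $D^m$ into $M$. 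The fix is again routine: interleave the zoom with a norm-dependent shrink so that every stage stays inside $B_\epsilon(0)$ before exponentiating (or run the whole argument on homotopy groups with compact families, where a uniform shrink is available, as you already do for the inclusion $\Emb_p^U(D^m,M)\hookrightarrow\Emb_p(D^m,M)$). With these modifications your proof is correct; as written, those two steps would break down.
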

\begin{lemma}\label{lem: cut out disc}
    Let $M$ be a closed 3-manifold and $D\subseteq M$ an embedded 3-disc.
    Denote 
    \[\Diff^{\Or}(M, D) = \{\varphi\in \Diff(L, D)\,|\, \left.\varphi\right|_{D}\in \Or(3)\subseteq \Diff(D)\}.\]
    The maps 
    \[\Diff(M\setminus \interior{D})\leftarrow \Diff^{\Or}(M, D) \to \Diff_{x}(M)\]
    are weak equivalences, where $x\in D$ is its center point.
\end{lemma}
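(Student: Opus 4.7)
The plan is to prove each equivalence independently by exhibiting it as the middle vertical in a morphism of fiber sequences whose fibers and bases are already known to be weak equivalences, and then to conclude via the five lemma on long exact sequences of homotopy groups.

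For the first map $\Diff^{\Or}(M, D) \to \Diff(M \setminus \interior{D})$ I would set up the commutative diagram
\[\begin{tikzcd}
\Diff_D(M) \arrow[r] \arrow[d, "\simeq"] & \Diff^{\Or}(M, D) \arrow[r] \arrow[d] & \Or(3) \arrow[d, hook, "\simeq"] \\
\Diff_{\partial D}(M \setminus \interior{D}) \arrow[r] & \Diff(M \setminus \interior{D}) \arrow[r] & \Diff(S^2)
\end{tikzcd}\]
whose horizontal arrows are given by restriction. The top row is the pullback of the fibration $\Diff_D(M) \to \Diff(M, D) \to \Diff(D)$ of Lemma \ref{submnfld fib seq} along the inclusion $\Or(3) \hookrightarrow \Diff(D)$, while the bottom row is the restriction-to-boundary fibration, obtained by applying Theorem \ref{Emb is locally retractile} to an interior parallel copy of $\partial D$ inside a collar and combining with Theorem \ref{contractable collars}. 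The left vertical is a weak equivalence by Theorem \ref{contractable collars}, and the right vertical is the classical weak equivalence $\Or(3) \simeq \Diff(S^2)$. The five lemma then yields the middle equivalence.

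For the second map $\Diff^{\Or}(M, D) \to \Diff_x(M)$ I would use
\[\begin{tikzcd}
\Diff_D(M) \arrow[r] \arrow[d, equal] & \Diff^{\Or}(M, D) \arrow[r] \arrow[d, hook] & \Or(3) \arrow[d, hook, "\simeq"] \\
\Diff_D(M) \arrow[r] & \Diff_x(M) \arrow[r] & \Emb_x(D, M)
\end{tikzcd}\]
where the bottom row is the Palais fiber sequence $\Diff_D(M) \to \Diff(M) \to \Emb(D, M)$ pulled back along the inclusion of $\Emb_x(D, M)$, the subspace of embeddings of $D$ into $M$ sending the origin to $x$. The right vertical sends $A \in \Or(3)$ to $\iota_0 \circ A$, where $\iota_0\colon D \hookrightarrow M$ is the chosen embedding; under the weak equivalence $\Emb_x(D, M) \simeq \GL(T_xM)$ of Theorem \ref{embeddings of discs are framings}, it becomes the standard inclusion $\Or(3) \hookrightarrow \GL(3)$, which is a weak equivalence by polar decomposition. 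The five lemma once more finishes the argument.

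The principal technical obstacle is verifying that the restriction and evaluation maps involved really are Serre fibrations so that the long exact sequences of homotopy groups are at one's disposal; for the top rows of both diagrams and for the bottom row of the second diagram this is automatic from the local retractileness of embedding spaces in Theorem \ref{Emb is locally retractile}, while the subtlest case is the bottom row of the first diagram, where the boundary must be handled via a collar argument as indicated. A harmless point worth flagging is that the restriction $\Diff^{\Or}(M, D) \to \Or(3)$ need not be surjective — for instance only $\SO(3)$ lies in the image when $M$ admits no orientation-reversing diffeomorphism — but precisely the same restriction cuts down the bases of the target fiber sequences, so the equivalence of bases persists component-wise and the argument goes through unchanged.
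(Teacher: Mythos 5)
Your argument is correct, but it is packaged differently from the paper's. The paper treats each of the two maps directly as a base change: it identifies $\Diff^{\Or}(M,D)$ with the pullback of the weak equivalence $\Or(3)\to\Diff(\partial(M\setminus\interior{D}))$ (Smale) along the boundary-restriction fibration $\Diff(M\setminus\interior{D})\to\Diff(\partial(M\setminus\interior{D}))$, respectively with the pullback of $\Or(3)\to\Emb_{\{x\}}(D^3,M)$ (a weak equivalence by the derivative map and two-out-of-three, exactly as in your second diagram) along $\Diff_x(M)\to\Emb_{\{x\}}(D^3,M)$, and then uses that pulling back a weak equivalence along a fibration yields a weak equivalence. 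You instead exhibit each map as the middle arrow in a morphism of fiber sequences with common fiber $\Diff_D(M)$ (identified with $\Diff_{\partial D}(M\setminus\interior{D})$ via Theorem \ref{contractable collars}) and apply the five lemma. The inputs are the same (Smale, Theorem \ref{embeddings of discs are framings}, the Palais-type fibrations), so the difference is mainly mechanical, but it has one real consequence: the base-change argument needs no surjectivity of $\Diff^{\Or}(M,D)\to\Or(3)$ or of the restriction to $\Diff(S^2)$, whereas your five-lemma route needs the two fibrations to hit corresponding unions of path components of $\Or(3)$ and of $\Diff(S^2)$ (respectively of $\Emb_x(D,M)\simeq\GL_3(\mathbb{R})$). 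You flag this as harmless but only assert it; to close it one should argue, e.g.\ via Lemma \ref{lem: extendability based on boundary} or the isotopy extension theorem, that if some component of $\Diff(S^2)$ (resp.\ $\GL_3(\mathbb{R})$) is realized by a diffeomorphism of $M\setminus\interior{D}$ (resp.\ of $(M,x)$), then after straightening near $S^2$ and coning the corresponding element of $\Or(3)$ over $D$ one obtains an element of $\Diff^{\Or}(M,D)$ in the matching component. With that sentence added your proof is complete; on the other hand, your version makes explicit the fibration property of restriction to the boundary and the role of the collar theorem, which the paper leaves implicit.
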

\begin{proof}
    The map $\Diff^{\Or}(M, D)\to \Diff(M\setminus \interior{D})$ is the pullback of the map $\Or(3)\to \Diff(\partial(M\setminus \interior{D}))$ along the restriction $\Diff(M\setminus \interior{D})\to \Diff(\partial(M\setminus \interior{D}))$.
    By the Smale theorem, the map $\Or(3) \to \Diff(S^2)\cong \Diff(\partial(M\setminus \interior{D}))$ is a weak equivalence.

    The map $\Diff^{\Or}(M, D)\to \Diff_{x}(M)$ is a weak equivalence as it is a pullback of the map $\Or(3)\to\Emb_{\{x\}}(D^3, M)$ that is given by acting through precomposition by an element of $\Or(3)$ viewed as a diffeomorphism of $D^3$ on the embedding of $D$.
    Here $\Emb_{\{x\}}(D^3, M) = \{i \in \Emb(D^3, M)\, |\, i(0) = x\}$.
    Taking the derivative at $x$ gives a weak equivalence $\Emb_{\{x\}}(D^3, M)\to \GL_3(\mathbb{R})$ and this means that as $\GL_3(\mathbb{R})$ retracts onto $\Or(3)$, the composition with $\Or(3)\to\Emb_{\{x\}}(D^3, M) $ is a weak equivalence and we conclude using the 2 out of 3 property.
\end{proof}

\section{Setup}
\subsection{The main homotopy fiber sequence}
There is a theorem of Hatcher, remarked in \cite{Hatch81}, also proven in \cite[Theorem 3.21]{jan24} stating:
\begin{theorem}\label{theorem of Hatcher}
    Let $M$ be a connected sum of two irreducible manifolds that are not diffeomorphic to $S^3$. 
    If $S\subseteq M$ is the 2-sphere these irreducible pieces are joined along, then the inclusion 
    $\Diff(M, S) \hookrightarrow \Diff(M)$ is an equivalence.
\end{theorem}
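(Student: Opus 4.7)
The plan is to realize $\Diff(M)/\Diff(M,S)$ as a space of embedded $2$-spheres and then appeal to Hatcher's contractibility theorem for such a space.

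First, I would apply the Palais fiber sequence~\eqref{eq: Palais fib seq} to a fixed parametrization $\iota\colon S^2\hookrightarrow M$ with image $S$, producing
\[\Diff_S(M)\to \Diff(M)\to \mathcal{E},\]
where $\mathcal{E}$ is the $\Diff(M)$-orbit of $\iota$ inside $\Emb(S^2,\interior{M})$. The group $\Diff(S^2)$ acts freely on $\mathcal{E}$ by precomposition, and the quotient $\mathcal{S} := \mathcal{E}/\Diff(S^2)$ is the space of embedded submanifolds of $M$ ambient isotopic to $S$. Combining the Palais sequence above with Lemma~\ref{ses delooped} applied to the principal extension $\Diff_S(M)\to \Diff(M,S)\to \Diff(S)$ (or using Lemma~\ref{submnfld fib seq} and an orbit–stabilizer argument directly) yields a fiber sequence
\[\Diff(M,S)\to \Diff(M)\to \mathcal{S}.\]
Thus the theorem reduces to showing that $\mathcal{S}$ is weakly contractible.

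To identify $\mathcal{S}$ geometrically, I would invoke Laudenbach's uniqueness of the prime decomposition of a closed orientable $3$-manifold: since both summands of $M$ are irreducible and not diffeomorphic to $S^3$, every embedded $2$-sphere in $M$ either bounds a $3$-disc or is ambient isotopic to $S$. Hence $\mathcal{S}$ coincides with the space of \emph{essential} (non-bounding) embedded $2$-spheres in $M$.

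The main obstacle is the contractibility of this space of essential $2$-spheres in a reducible $3$-manifold, which is really the substance of the theorem. Rather than reprove it, I would cite Hatcher's work~\cite{Hatch81} or its modern account in~\cite[Theorem 3.21]{jan24}. Granted this contractibility, the fiber sequence above collapses and the inclusion $\Diff(M,S)\hookrightarrow\Diff(M)$ is a weak equivalence.
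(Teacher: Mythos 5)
The paper does not actually prove this statement: it is quoted as a theorem of Hatcher, remarked in \cite{Hatch81} and proven in \cite[Theorem 3.21]{jan24}, so the honest comparison is between your sketch and those sources rather than an in-paper argument. Your reduction is the standard one and is essentially how the cited references proceed: the orbit map $\Diff(M)\to\mathcal{S}$, $\varphi\mapsto\varphi(S)$, is a fibration with fibre $\Diff(M,S)$, the space $\mathcal{S}$ of spheres isotopic to $S$ is identified with the space of all essential spheres by uniqueness of the prime decomposition (Laudenbach), and everything then hinges on weak contractibility of that sphere space. Two caveats. First, be aware that this contractibility \emph{is} the entire content of the theorem: \cite[Theorem 3.21]{jan24} is verbatim the statement you are asked to prove (what Boyd--Bregman--Steinebrunner establish en route is the contractibility of the relevant space of sphere systems, and Hatcher's \cite{Hatch81} only remarks that his $S^1\times S^2$ methods extend), so citing it for the ``granted'' step makes your argument a correct unwinding of how the cited result is obtained rather than an independent proof --- which is no worse than what the paper itself does. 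Second, your invocation of Lemma \ref{ses delooped} via the extension $\Diff_S(M)\to\Diff(M,S)\to\Diff(S)$ is slightly off: the restriction map need not be surjective onto $\Diff(S)\cong\Diff(S^2)$ (an orientation-reversing diffeomorphism of $S$ would have to swap or reverse the summands, which is obstructed when the $P_i$ are non-diffeomorphic or admit no orientation-reversing diffeomorphisms), so one must work with the image $\Diff'(S)$ as in Lemma \ref{submnfld fib seq}. The cleaner route is the one you mention parenthetically: $\mathcal{E}\to\mathcal{S}$ has local sections, so by Lemma \ref{local retractileness}(2) the space $\mathcal{S}$ is $\Diff(M)$-locally retractile and the orbit map $\Diff(M)\to\mathcal{S}$ is a fibration with fibre the setwise stabilizer $\Diff(M,S)$, which is all you need.
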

From now on we set $M\cong L_1\#L_2$ for two generic lens spaces, so that $L_1\not \cong L_2$.
Fix a 2-sphere $S$ in $M\cong L_1\#L_2$ is such that $M\setminus N(S) \cong L_1\setminus\interior{D^3} \sqcup L_2\setminus\interior{D^3}$ where $N(S)$ is an open tubular neighborhood of $S$.
As $L_1\not\cong L_2$, $\Diff(M)\simeq \Diff(M, S)\cong \Diff(M, L_2\setminus\interior{D^3})$.
Consider the following exact sequence of topological groups,
\begin{equation}\label{main fib seq w.o. delooping}
    \Diff_{L_2\setminus\interior{D^3}}(M)\to \Diff(M, L_2\setminus\interior{D^3}) \overset{p}{\to} \Diff(L_2\setminus\interior{D^3}).
\end{equation}
By Lemma \ref{submnfld fib seq}, to see that this is a principal short exact sequence, we need the second map to be surjective.
However as a consequence of contractability of collars, we have the following lemma:
\begin{lemma}\label{lem: extendability based on boundary}
    Let $V\subseteq M$ be a codimension zero submanifold of M and $\varphi\in\Diff(V)$.
    There is some $f\in \Diff(M, V)$ such that  $\left.f\right|_V = \varphi$ if and only if there is some $\psi\in \Diff(M, V)$ such that \[[\left.\psi\right|_{\partial V}] = [\left.\varphi\right|_{\partial V}]\in\pi_0\Diff(\partial V).\]
    This says that the extendability of $\varphi$ only depends on $[\left.\varphi\right|_{\partial V}]\in \pi_0\Diff(\partial V)$.
\end{lemma}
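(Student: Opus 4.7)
The forward implication is immediate: taking $\psi = f$ gives $[\psi|_{\partial V}] = [\varphi|_{\partial V}]$. For the converse, given $\psi \in \Diff(M, V)$ with $[\psi|_{\partial V}] = [\varphi|_{\partial V}]$, I aim to construct $f \in \Diff(M, V)$ with $f|_V = \varphi$. The plan is to first modify $\psi$ so that its boundary restriction equals $\varphi|_{\partial V}$ on the nose, then to reduce to extending a diffeomorphism of $V$ that fixes $\partial V$ pointwise, and finally to extend such a diffeomorphism by means of the contractibility of collars.

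For the first step, choose an isotopy $h_t \in \Diff(\partial V)$ from $h_0 = \mathrm{id}$ to $h_1 = \varphi|_{\partial V} \circ (\psi|_{\partial V})^{-1}$, a bicollar $c\colon \partial V \times (-\varepsilon, \varepsilon) \hookrightarrow M$ with $c(\partial V \times [0, \varepsilon)) \subseteq V$, and a smooth bump $\rho$ with $\rho(0) = 1$ vanishing for $|s| \geq \varepsilon / 2$. Then $K(c(x, s)) := c(h_{\rho(s)}(x), s)$, extended by the identity outside the bicollar, defines $K \in \Diff(M, V)$ with $K|_{\partial V} = h_1$, since $K$ preserves each $s$-level and hence $V$. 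Replacing $\psi$ with $K \circ \psi$, I may assume $\psi|_{\partial V} = \varphi|_{\partial V}$. The element $\gamma := \varphi \circ (\psi|_V)^{-1}$ then lies in $\Diff_{\partial V}(V)$, and it suffices to produce $g \in \Diff(M, V)$ with $g|_V = \gamma$, since $g \circ \psi$ will restrict to $\varphi$.

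For this last step, Theorem \ref{contractable collars} applied with $U = W := M \setminus \interior{V}$ supplies a weak equivalence $\Diff_W(M) \hookrightarrow \Diff_{\partial V}(V)$. Surjectivity on $\pi_0$ yields $\gamma' \in \Diff_W(M) \subseteq \Diff(M, V)$ together with an isotopy $\gamma_t$ in $\Diff_{\partial V}(V)$ from $\gamma'|_V$ to $\gamma$. Setting $\delta_t := \gamma_t \circ (\gamma'|_V)^{-1}$ produces a boundary-fixing isotopy of $V$ starting at $\mathrm{id}_V$, which the isotopy extension theorem (applied to the compact codimension-zero submanifold $V \subseteq M$) lifts to an ambient isotopy $\tilde{\delta}_t \in \Diff(M, V)$ with $\tilde{\delta}_0 = \mathrm{id}_M$ and $\tilde{\delta}_t|_V = \delta_t$. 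The diffeomorphism $g := \tilde{\delta}_1 \circ \gamma'$ then satisfies $g|_V = \delta_1 \circ \gamma'|_V = \gamma$, as required. The main technical input is Theorem \ref{contractable collars}; the most delicate point is the smoothness of $\tilde{\delta}_t$ across $\partial V$, handled by the standard bicollar-based proof of isotopy extension.
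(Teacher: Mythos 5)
Your proof is correct. The paper states this lemma without proof, presenting it as a consequence of the contractibility of collars, and your argument is exactly of the intended kind: the bicollar twist reduces the problem to extending an element of $\Diff_{\partial V}(V)$, and then the weak equivalence $\Diff_{M\setminus\interior{V}}(M)\hookrightarrow\Diff_{\partial V}(V)$ of Theorem \ref{contractable collars}, used only through its surjectivity on $\pi_0$, combined with isotopy extension applied to the boundary-fixing isotopy $\delta_t$, yields an extension of $\varphi$ itself. The only points left implicit are routine: continuous paths in $\Diff(\partial V)$ and $\Diff_{\partial V}(V)$ should be replaced by smooth isotopies before being fed into the bicollar twist and the isotopy extension theorem (standard for the $C^\infty$-topology), and the bicollar of $\partial V$ and the hypothesis $U\subseteq\interior{M}$ of Theorem \ref{contractable collars} are unproblematic because in the paper's application $M$ is closed.
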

On one hand $\pi_0 \Diff(\partial L_2\setminus\interior{D^3}) \cong \pi_0 \Diff(S^2) \cong \pi_0 \Or (3)\cong C_2$, where under the last isomorphism orientation preserving diffeomorphisms are mapped to $+1$ and orientation reversing diffeomorphisms are mapped to $-1$.
On the other hand, generic lens spaces do not admit orientation reversing homeomorphisms, \cite[Proposition 1.1]{mccul00}, and therefore for all $\varphi \in \Diff(\partial L_2\setminus\interior{D^3})$, $[\left.\varphi\right|_{\partial L_2\setminus\interior{D^3}}] = [\text{id}]\in \pi_0 \Diff(\partial L_2\setminus\interior{D^3})$.
This means Lemma \ref{lem: extendability based on boundary} implies that the short exact sequence (\ref{main fib seq w.o. delooping}) is a principal short exact sequence.
This in particular means that by Lemma \ref{ses delooped} we can deloop this to a homotopy fiber sequence as follows:
\begin{equation}\label{main fib seq}
    B\Diff_{L_2\setminus\interior{D^3}}(M)\to B\Diff(M, L_2\setminus\interior{D^3}) \to B\Diff(L_2\setminus\interior{D^3}).
\end{equation}
Let us inspect the outer terms of (\ref{main fib seq}).
Contractability of collars implies that $\Diff_{L_2\setminus\interior{D^3}}(M)\simeq \Diff_\partial(L_1\setminus\interior{D^3})$.
Applying it again yields $\Diff_\partial(L_1\setminus\interior{D^3})\simeq \Diff_{D^3}(L_1)$.
Furthermore applying Lemma \ref{lem: cut out disc} we get $\Diff(L_2\setminus\interior{D^3}) \simeq \Diff_{\text{pt}}(L_2)$.
This means that to get the terms in the Leray-Serre spectral sequence induced by (\ref{main fib seq}), we just have to calculate the cohomology of $B\Diff_{D^3}(L_1)$ and $B \Diff_{\text{pt}}(L_2)$.

\subsection{Strategy}\label{strategy section}
Let us go over our strategy for the proof before we get to the details.
By Theorem \ref{theorem of Hatcher} $\Diff(M, S)\simeq \Diff(M)$ and we want to compute the cohomology of the classifying space of $G = \Diff(M, S)$.
Our strategy to calculate the cohomolgy of $BG$ is using the homotopy fiber sequence
\[BG_0\to BG \to B\pi_0G\]
where $G_0$ is the path component of the unit in $G$.
Since the $E_2$-page is twisted, one has to determine the action of $\pi_1 BG\cong \pi_0 G$ on the cohomolgy of $BG_0$ in order to figure out the cohomology of $BG$.
If we can do this, and assuming that $G_0$ is a finite group, we obtain that
\[H^\ast(BG) \cong H^\ast(BG_0)^{\pi_0 G}.\]
This means we need to calculate $\pi_0 \Diff(M, S)$, $H^\ast(B\Diff(M, S)_0)$, and the action.

We calculate the cohomology groups $H^k(B\Diff(M, S)_0)$ using the cohomological Leray-Serre spectral sequence associated to the homotopy fibers sequence (\ref{main fib seq}), this will turn out to collapse on the second page.
However this does not tell us the ring structure.
In order to calculate that we use the map induced by the product of the restrictions
\[H^\ast(B\Diff(L_2\setminus\interior{D^3})_0 \times B\Diff(L_1\setminus\interior{D^3})_0)\to H^\ast(B\Diff(M, S)_0).\]
We show that the kernel of this map contains a specific ideal, and then as we know the dimensions of $H^k(B\Diff(M, S)_0)$ as a $\mathbb{Q}$-vector space for each $k$, we can conclude that the kernel is in fact equal to that ideal.

In the calculation of both $B\Diff_{D^3}(L)_0$ and $B \Diff_{\text{pt}}(L)_0$ we will exploit the covering of $\Isom(L)_0$ by $\SO(2)\times \SO(2)$ as discussed in Lemma \ref{lem: the descenting isometries}.

\subsection{The mapping class groups}
Our goal in this section is to calculate $\pi_0\Diff(M)$, the mapping class group of $M$.
\begin{lemma}\label{lem: descending differentials fixing points}
    Consider the inclusions
    \[\iota_{1j} \colon \SO(2)\hookrightarrow \Isom^+_{\{1j\}}(S^3)\]
    be the inclusion given as $e^{2ti} \mapsto F(e^{ti}, e^{-ti})$ and 
    \[\iota_{1}\colon \SO(2) \hookrightarrow \Isom^+_{\{1\}}(S^3)\]
    be the inclusion given as $e^{2ti} \mapsto F(e^{ti}, e^{ti})$ for all $t\in [0, \pi)$. Let $x$ denote either $1j$ or $1$ and $p^\ast\colon \Norm(C_m)_0\to \Diff_{p(x)}(L)_0$ the map induced by the projection $p\colon S^3\to L$ where $\Norm(C_m)$ is the normalizer of the $C_m < \Isom^+(S^3)$ that we are quotienting $S^3$ by to gain $p$. 
    Given an identification of the tangent space of at $x$ with $\mathbb{R}^3$, we get that the composition
    \[\SO(2)\overset{\iota_{x}}{\to} \Norm(C_m)_0 \overset{p^\ast}{\to}\Diff_{\{p(x)\}}(L)_0\overset{T_{x}}{\to}\GL^+_3(\mathbb{R})\]
    is the inclusion.
\end{lemma}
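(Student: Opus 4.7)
The plan is to exploit that $p\colon S^3\to L$ is a Riemannian covering and hence a local diffeomorphism, reducing the entire claim to an explicit derivative computation on $S^3$. For any $\varphi\in\Norm(C_m)$ fixing $x$, the descended isometry $p^\ast\varphi$ fixes $p(x)$, and the chain rule gives $dp_x\circ d\varphi_x = d(p^\ast\varphi)_{p(x)}\circ dp_x$. Thus, transporting along the linear isomorphism $dp_x\colon T_xS^3\xrightarrow{\cong} T_{p(x)}L$, the composition $T_x\circ p^\ast\circ \iota_x$ coincides with the differential at $x$ of the one-parameter subgroup $\iota_x$ acting on $S^3$. It therefore suffices to compute $d\iota_x(e^{2ti})|_x$ in a convenient basis of $T_xS^3$ and show it is the standard block-diagonal rotation.

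The computations are pure quaternion bookkeeping using $e^{ti}j = je^{-ti}$ and $ij=k=-ji$. For $x=1$, the element $\iota_1(e^{2ti})=F(e^{ti},e^{ti})$ is conjugation $q\mapsto e^{ti}qe^{-ti}$, so its differential at $1$ sends $v\in T_1S^3=\langle i,j,k\rangle$ to $e^{ti}ve^{-ti}$. One computes directly that $i\mapsto i$, $j\mapsto \cos(2t)\,j+\sin(2t)\,k$, and $k\mapsto -\sin(2t)\,j+\cos(2t)\,k$. Ordering the basis as $(j,k,i)$ identifies this with the standard $\SO(2)\hookrightarrow\SO(3)\subseteq \GL^+_3(\mathbb{R})$.

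For $x=1j=j$, the element $\iota_{1j}(e^{2ti})=F(e^{ti},e^{-ti})$ is $q\mapsto e^{ti}qe^{ti}$, and $T_jS^3=\langle 1,i,k\rangle$. Using that $i$ commutes with $e^{ti}$ and that $je^{ti}=e^{-ti}j$, one gets $1\mapsto e^{2ti}=\cos(2t)+\sin(2t)\,i$, $i\mapsto ie^{2ti}=-\sin(2t)+\cos(2t)\,i$, and $k=ij\mapsto i(e^{ti}je^{ti})=i\cdot j=k$. In the basis $(1,i,k)$ this is again the standard block-diagonal inclusion.

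There is no real conceptual obstacle; the only care needed is bookkeeping, namely fixing a quaternion convention compatible with $F(q_1,q_2)(q)=q_1qq_2^{-1}$ and selecting the orthonormal identification $T_xS^3\cong\mathbb{R}^3$ (placing the fixed tangent direction last) so that the matrix literally becomes the standard inclusion $\SO(2)\hookrightarrow\GL^+_3(\mathbb{R})$ rather than a conjugate of it. The freedom in the choice of identification, which the statement explicitly allows, absorbs any remaining ambiguity.
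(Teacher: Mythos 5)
Your proposal is correct and follows essentially the same route as the paper: reduce the claim to $S^3$ using that the covering $p$ is a local diffeomorphism near $x$ (the paper does this via charts pulled back along $p$ on a small ball preserved by $\iota_x$, you via the chain rule with $dp_x$), and then observe that the differential of $\iota_x(e^{2ti})$ at $x$, restricted to $T_xS^3$, is the standard rotation block-summed with the identity. Your explicit quaternion computation just spells out in detail what the paper asserts from the description of $F(\zeta,\zeta)$ and $F(\zeta,\zeta^{-1})$ as coordinate rotations of $\mathbb{C}^2$.
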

\begin{proof}
    Both of $\iota_1$ and $\iota_{1j}$ land in the $\SO(2)\times\SO(2) = F(S^1, S^1)$ subgroup of $\Isom^+(S^3)$ that is always in the normalizer of the subgroup we quotient by to get a generic lens space.
    The action of $C_m$ on $S^3$ is a free action of a finite discrete group, and therefore $\varepsilon$ chosen small enough, each point in $B_x(\varepsilon)$, where $B_{q_0 + q_1j}(\varepsilon) = \{z_0+z_1j\in S^3 \,|\, |z_0-q_0|^2+|z_1-q_1|^2 < \varepsilon\}$.
    Furthermore the image of $\iota_{x}$ leaves $x$ fixed and in fact also $B_x(\varepsilon)$ as for $\zeta, z \in \mathbb{C}$, $|\zeta ^2 z| = |z|$ and $F(\zeta, \zeta)$ is multiplication of the second coordinate by $\zeta^2$ and $F(\zeta, \zeta^{-1})$ is multiplication of the first coordinate by $\zeta^2$.
    By all this we really mean that we get a diagram as follows:
   \[\begin{tikzcd}
    	{B_x(\varepsilon)} && {B_x(\varepsilon)} \\
    	{p(B_x(\varepsilon))} && {p(B_x(\varepsilon)).}
    	\arrow["{\left.\iota_x(\zeta)\right|_{B_x(\varepsilon)}}", from=1-1, to=1-3]
    	\arrow["\cong"', from=1-1, to=2-1]
    	\arrow["\cong"', from=1-3, to=2-3]
    	\arrow["{\left.p\circ\iota_x(\zeta)\right|_{p(B_x(\varepsilon))}}", from=2-1, to=2-3]
    \end{tikzcd}\]
    Therefore choosing the charts on $L$ to be gained locally from charts on $S^3$ through $p$ we see that the differential of $p\circ\iota_x(\zeta)$ at $p(x)$ agrees with the differential of $\iota_x(\zeta)$ at $x$.

    The composition $T_{x}\circ \iota_{x}\colon \SO(2) \to \GL_3(\mathbb{R})$ becomes the inclusion, given by block summing with the one-by-one identity matrix (we restrict the differential of $\iota_x(A)$ which is block summing the matrix of $A$ with a two-by-two identity matrix to the space spanned by the other three standard basis vectors besides $x$).
\end{proof}
\begin{theorem}\label{thm: lens space diffs pi_0's}
    For a generic lens space $L$, the inclusions $\Diff_{\text{pt}}(L)\hookrightarrow \Diff(L)$ and $\Diff_{D^3}(L)\hookrightarrow \Diff_{\text{pt}}(L)$ induce isomorphisms on path components, and we have
    \[\pi_0(\Diff_{D^3}(L))\cong\pi_0(\Diff_{\text{pt}}(L))\cong \pi_0(\Diff(L))\cong C_2.\]
\end{theorem}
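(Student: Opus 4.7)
The plan is to split the theorem into three parts. First, $\pi_0\Diff(L)\cong C_2$: by Theorem \ref{thm: generalized smale conj}, $\Diff(L)\simeq \Isom(L)=\Dih(S^1\tilde{\times}S^1)/\langle F(\xi_{2m}^{q+1}, \xi_{2m}^{q-1})\rangle$, and since the cyclic subgroup lies inside the identity component $S^1\tilde{\times}S^1$, the quotient has identity component the $2$-torus $(S^1\tilde{\times}S^1)/C_m$ and component group $\Dih(S^1\tilde{\times}S^1)/(S^1\tilde{\times}S^1)\cong C_2$. The other two claims (that both inclusions are $\pi_0$-isomorphisms) will each follow from the long exact sequence of a Palais-style fiber sequence.

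For $\Diff_{\text{pt}}(L)\hookrightarrow \Diff(L)$, I apply the Palais fiber sequence (\ref{eq: Palais fib seq}) with $V=\{\text{pt}\}$, namely $\Diff_{\text{pt}}(L)\to \Diff(L)\to L$. The long exact sequence together with path-connectedness of $L$ reduces the claim to surjectivity of $\pi_1\Diff(L)\to \pi_1 L\cong C_m$. To realize a generator I take the path $\gamma(t) = F(e^{\pi i t(q+1)/m}, e^{\pi i t(q-1)/m})\in \Isom^+(S^3)$ for $t\in[0,1]$; its endpoint is the deck generator $F(\xi_{2m}^{q+1},\xi_{2m}^{q-1})$, so after descent $\gamma$ becomes a loop in $\Isom^+(L)_0\simeq \Diff(L)_0$. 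Evaluation at $\text{pt}=p(1)$ traces the loop $t\mapsto p(e^{2\pi i t/m})$ in $L$, whose lift to $S^3$ runs from $1$ to $\xi_m = F(\xi_{2m}^{q+1},\xi_{2m}^{q-1})(1)$, the image of $1$ under the deck generator, so $[\gamma]$ maps to a generator of $\pi_1 L$.

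For $\Diff_{D^3}(L)\hookrightarrow \Diff_{\text{pt}}(L)$, pulling back the Palais fibration for $V=D^3$ along the evaluation-at-$0$ map $\Emb(D^3, L)\to L$ gives the fiber sequence $\Diff_{D^3}(L)\to \Diff_{\text{pt}}(L)\to \Emb_{\text{pt}}(D^3, L)$, with base weakly equivalent to $\GL_3(\mathbb{R})$ by Theorem \ref{embeddings of discs are framings}. Since generic lens spaces admit no orientation-reversing diffeomorphisms, the induced map $\pi_0\Diff_{\text{pt}}(L)\to \pi_0\GL_3(\mathbb{R}) = C_2$ vanishes, yielding surjectivity of the inclusion on $\pi_0$. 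For injectivity I need surjectivity of $\pi_1\Diff_{\text{pt}}(L)\to \pi_1\GL_3^+(\mathbb{R})\cong C_2$; by Lemma \ref{lem: descending differentials fixing points}, the composition $\SO(2)\overset{\iota_1}{\to} \Diff_{\text{pt}}(L)_0\to \GL_3^+(\mathbb{R})$ is the block-diagonal inclusion, which (via $\GL_3^+(\mathbb{R})\simeq \SO(3)$) is the standard maximal torus inclusion and hence induces the surjective mod-$2$ map on $\pi_1$. The main technical point is the quaternion computation $F(\xi_{2m}^{q+1},\xi_{2m}^{q-1})(1) = \xi_m$ used in the previous paragraph to identify $[\gamma]$ with a generator of $C_m$; once this is verified the remaining verifications are routine.
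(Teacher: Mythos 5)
Your proposal is correct and follows essentially the same route as the paper: the same identification of $\pi_0\Diff(L)\cong\pi_0\Isom(L)\cong C_2$, the same Palais fiber sequence $\Diff_{\text{pt}}(L)\to\Diff(L)\to L$ with the loop $t\mapsto F(\xi_{2m}^{t(q+1)},\xi_{2m}^{t(q-1)})$ hitting the generator of $\pi_1(L)$, and the same derivative fibration over (a version of) $\GL_3^+(\mathbb{R})\simeq\SO(3)$ with surjectivity on $\pi_1$ supplied by $\iota_1$ and Lemma \ref{lem: descending differentials fixing points}. The only cosmetic differences are that you phrase the base as $\Emb_{\text{pt}}(D^3,L)\simeq\GL_3(\mathbb{R})$ and invoke the absence of orientation-reversing diffeomorphisms explicitly, and you cite the standard maximal-torus fact for $\pi_1(\SO(3))$ instead of the paper's explicit $D^3/\!\sim$ model.
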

\begin{proof}
    The statement $\pi_0(\Diff(L))\cong C_2$ follows from the generalized Smale conjecture (Theorem \ref{thm: generalized smale conj}) and from $\Isom(L)\cong \Dih(S^1\tilde{\times}S^1)$ (quotienting $\Dih(S^1\tilde{\times}S^1)$ by $\langle F(\xi_{2m}^{q+1}), \xi_{2m}^{q-1})\rangle$ just results in an $m$-fold covering of $\Dih(S^1\tilde{\times}S^1)$ by itself).

    Let $1 = p(1)\in L$ for the quotient map $p\colon S^3\to L$.
    
    For $\pi_0(\Diff_{\text{pt}}(L))\cong \pi_0(\Diff(L))$ consider the fiber sequence
    \[\Diff_{\{1\}}(L)\to \Diff(L)\to L \cong \Emb(\text{pt}, L)\]
    this yields an exact sequence
    \[\pi_1(\Isom(L), \text{id}) \overset{f}{\to} \pi_1(L, 1)\to \pi_0(\Diff_{\{1\}}(L) )\overset{g}{\to} \pi_0(\Diff(L))\to \pi_0(L)\cong\text{pt}.\]
    To see that $g$ is an isomorphism we just need that $f$ is surjective.
    $\pi_1(L)$ is cyclic so all we have to show is that $f$ hits its generator.
    $p\circ \gamma$ generates $\pi_1(L)$ for $\gamma(t) = e^{\frac{2\pi i t}{m}}$ by covering theory, as $\xi_m = F(\xi_{2m}^{q+1}, \xi_{2m}^{q-1})(1)$, and $F(\xi_{2m}^{q+1}, \xi_{2m}^{q-1})$ is the generator of the $C_m$-action on $S^3$ we quotient by.
    Now we just have to see that $\gamma$ can be given by a path $\lambda$ in $\Norm(C_m) = \Dih(S^1\tilde{\times}S^1) = \langle F(S^1\times S^1), F(j, j) \rangle$ so that $\lambda(t)(1) = \gamma(t)$ and $\lambda$ becomes a loop in $\Isom(L)$.
    Such a path may be constructed as $\lambda(t) = f(\xi_{2m}^{t(q+1)}, \xi_{2m}^{t(q-1)})$, where $f(q_1, q_2)$ denotes the isometry of $L$ induced by $F(q_1, q_2)$ for any $q_1$ and $q_2$ this makes sense for.

    For $\pi_0(\Diff_{D^3}(L))\cong\pi_0(\Diff_{\text{pt}}(L))$ consider the homotopy fiber sequence
    \[\Diff_{D^3}(L) \to \Diff_{\{1\}}(L) \overset{T_1}{\to} \GL_3^{+}(\mathbb{R})\simeq SO(3).\]
    This gives rise to the exact sequence
    \[\pi_1(\Diff_{\{1\}}(L), \text{id}) \overset{f}{\to} \pi_{1}(\SO(3), \text{id})\to \pi_0(\Diff_{D^3}(L) )\overset{g}{\to} \pi_0(\Diff_{\{1\}}(L))\to \pi_0(\SO(3))\simeq \text{pt}.\]
    Again we have to see that $f$ is surjective.
    We have $\GL_3^{+}(\mathbb{R})\simeq \SO(3) \cong D^3/\sim$ where on $D^3$ we identify the antipodal points of $\partial D^3$, we take $D^3= \{x\in \mathbb{R}^3 \,|\, |x|\leq \pi\}$ and then each point $x\in D^3$ of it corresponds to the rotation around the span of $\{x\}$ in $\mathbb{R}^3$ by the angle $|x|$ and clockwise or counter clockwise depending on the sign of $x$, the origin corresponds to the identity.
    $\pi_1(\SO(3), \text{id}) = C_2$ generated by the loops given by $\gamma\colon [0, 1]\to D^3/\sim$, with $\gamma(t)= tx - (1-t)x$ for some $x\in \partial D^3$.
    This means that we want a loop $\lambda$ in $\Diff_{\{1\}}(L)$ with $T_1\lambda(t)$ being rotation by $(2t-1)\pi$ around some axis (as rotation by $\theta$ around an axis spanned by $x$ is rotation by $-\theta$ around the axis given by $-x$).
    Consider $\lambda(t)$ given by $F(\zeta_t, \zeta_t)$ for $\zeta_t = e^{\pi i t}$, since $\zeta_t\in S^1$, $F(\zeta_t, \zeta_t)(z_0+z_1j) = z_0+\zeta_t^2 z_1 j$.
    This is essentially the loop in $\Isom^+_1(S^3)$ given by $\iota_1(S^1)$ and therefore by Lemma \ref{lem: descending differentials fixing points} we conclude.
\end{proof}
Finally, we compute the path components of $\Diff(M, S)\simeq \Diff(M)$.
Before this calculation let us present a handy commutative diagram that will come up in another context later as well.
\begin{remark}\label{rem: handy commutative diagram}
    The following is a commutative diagram:
   \[\begin{tikzcd}[cramped,row sep=large]
    	{\Diff_{L_1\setminus \interior{D^3}}(M)} & {\Diff_\partial(L_2\setminus\interior{D^3})} & {\Diff_{D^3}(L_2)} \\
    	{\Diff(L_2\setminus \interior{D^3})} & {\Diff_{\text{pt}}(L_2, D^3)} & {\Diff_{\text{pt}}(L_2).}
    	\arrow["\simeq", from=1-1, to=1-2]
    	\arrow["{(\text{res}^M_{L_2\setminus \interior{D^3}})_\ast}", from=1-1, to=2-1]
    	\arrow[dashed, hook', from=1-2, to=2-1]
    	\arrow["\simeq"', from=1-3, to=1-2]
    	\arrow[dashed, hook', from=1-3, to=2-2]
    	\arrow[from=1-3, to=2-3]
    	\arrow["\simeq"', from=2-2, to=2-1]
    	\arrow["\simeq", from=2-2, to=2-3]
    \end{tikzcd}\]
\end{remark}
\begin{theorem}\label{thm: mapping class group}
    The mapping class group of $M\cong L_1\#L_2$ where $L_1$ and $L_2$ are non-diffeomorphic generic lens spaces is
    \[\pi_0 (\Diff(M)) \cong C_2\times C_2.\]
\end{theorem}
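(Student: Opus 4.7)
The plan is to analyze the homotopy fiber sequence
\[\Diff_{D^3}(L_1)\simeq\Diff_{L_2\setminus\interior{D^3}}(M)\to\Diff(M,L_2\setminus\interior{D^3})\to\Diff(L_2\setminus\interior{D^3})\simeq\Diff_{\text{pt}}(L_2)\]
from Section \ref{strategy section} together with the identification $\Diff(M)\simeq\Diff(M,L_2\setminus\interior{D^3})$ from Theorem \ref{theorem of Hatcher} (using $L_1\not\cong L_2$). Writing out the tail of the long exact sequence on homotopy groups gives
\[\pi_1\Diff_{\text{pt}}(L_2)\xrightarrow{\partial}\pi_0\Diff_{D^3}(L_1)\xrightarrow{\iota_\ast}\pi_0\Diff(M)\xrightarrow{p_\ast}\pi_0\Diff_{\text{pt}}(L_2)\to 0,\]
and by Theorem \ref{thm: lens space diffs pi_0's} the two outer nontrivial terms are each $C_2$. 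Once I exhibit a section of $p_\ast$ on $\pi_0$ and show $\partial=0$, I obtain a split extension $1\to C_2\to\pi_0\Diff(M)\to C_2\to 1$; since the only action of $C_2$ on $C_2$ is trivial, this must split as $C_2\times C_2$.

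A section of $p_\ast$ on $\pi_0$ is produced by \emph{extension by the identity}. Given $\varphi\in\Diff_{D^3}(L_2)$, the restriction $\varphi|_{L_2\setminus\interior{D^3}}$ is the identity on a collar of $S$ inside $L_2$, so gluing it with $\text{id}_{L_1\setminus\interior{D^3}}$ across $S$ defines a smooth $\widetilde\varphi\in\Diff(M,L_2\setminus\interior{D^3})$. Post-composing with $p$ recovers the inclusion $\Diff_{D^3}(L_2)\hookrightarrow\Diff(L_2\setminus\interior{D^3})$ appearing (up to the equivalences) in Remark \ref{rem: handy commutative diagram}, which is a $\pi_0$-bijection by Theorem \ref{thm: lens space diffs pi_0's}; inverting it on $\pi_0$ gives the desired section of $p_\ast$.

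To see $\partial=0$, equivalently that $\iota_\ast$ is injective, I use the symmetric restriction $\text{res}_1\colon\Diff(M,L_2\setminus\interior{D^3})\to\Diff(L_1\setminus\interior{D^3})$, well-defined because every element of the source preserves $S$ and hence $L_1\setminus\interior{D^3}$. The mirror image of Remark \ref{rem: handy commutative diagram} (with the roles of $L_1$ and $L_2$ interchanged) identifies the composite $\text{res}_1\circ\iota$ with the chain $\Diff_{D^3}(L_1)\xrightarrow{\simeq}\Diff_\partial(L_1\setminus\interior{D^3})\hookrightarrow\Diff(L_1\setminus\interior{D^3})$, which is a $\pi_0$-bijection $C_2\to C_2$ by Theorem \ref{thm: lens space diffs pi_0's}. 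Hence $\iota_\ast$ is injective on $\pi_0$, so $\image\partial=0$ and the proof is complete. The only technical point is the smoothness of the extension, which is automatic because $\varphi$ is the identity on an entire codimension-zero neighborhood of $S$ inside $L_2$ (all derivatives of $\widetilde\varphi$ match across $S$); everything else is bookkeeping via Remark \ref{rem: handy commutative diagram} and Theorem \ref{thm: lens space diffs pi_0's}, and no serious obstacle remains.
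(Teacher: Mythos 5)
Your argument is correct, and it runs on the same engine as the paper's proof: the principal fibration $\Diff_{L_2\setminus\interior{D^3}}(M)\to\Diff(M,L_2\setminus\interior{D^3})\to\Diff(L_2\setminus\interior{D^3})$, Theorem \ref{thm: lens space diffs pi_0's} for the outer $C_2$'s, and Remark \ref{rem: handy commutative diagram} to see that restricting the fiber to the opposite summand is a $\pi_0$-bijection (which gives injectivity of $\iota_\ast$ exactly as the paper deduces $\partial'=0$). Where you diverge is the endgame: the paper maps the whole fibration to the product fibration $\Diff(L_2\setminus\interior{D^3})\to\Diff(L_2\setminus\interior{D^3})\times\Diff(L_1\setminus\interior{D^3})\to\Diff(L_1\setminus\interior{D^3})$ and applies the five lemma, which settles surjectivity (via Lemma \ref{lem: extendability based on boundary}) and the extension problem in one stroke; you instead resolve the extension $1\to C_2\to\pi_0\Diff(M)\to C_2\to 1$ by exhibiting an explicit splitting homomorphism ``extend by the identity'' $\Diff_{D^3}(L_2)\to\Diff(M,L_2\setminus\interior{D^3})$ and invoking $\operatorname{Aut}(C_2)=1$ to rule out $C_4$. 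Both are valid; the paper's comparison is slicker in that it never touches a smooth gluing, while your splitting is more hands-on and also yields $\pi_0$-surjectivity of $p_\ast$ for free. One caveat on your construction: a diffeomorphism fixing $D^3$ pointwise is \emph{not} the identity on a collar, let alone a ``codimension-zero neighborhood,'' of $S$ inside $L_2$ (that phrase is wrong as stated, since $D^3$ lies on only one side of $S$); the correct justification, which you do gesture at, is that such a diffeomorphism has identity $\infty$-jet along $S=\partial D^3$ (its derivatives there are limits of derivatives from $\interior{D^3}$), so the glued map has matching one-sided jets across $S$ and is smooth. With that phrasing fixed — or by simply quoting Lemma \ref{lem: extendability based on boundary} for extendability and keeping the jet argument only for the homomorphism property of the section — your proof is complete.
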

\begin{proof}
    We consider the commutative diagram, where both rows are fiber sequences:
    \[\begin{tikzcd}
	{\Diff_{L_1\setminus\interior{D^3}}(M)} & {\Diff(M, L_1\setminus\interior{D^3})} & {\Diff(L_1\setminus\interior{D^3})} \\
	{\Diff(L_2\setminus\interior{D^3})} & {\Diff(L_2\setminus\interior{D^3}) \times \Diff(L_1\setminus\interior{D^3})} & {\Diff(L_1\setminus\interior{D^3}).}
	\arrow[from=1-1, to=1-2]
	\arrow[from=1-1, to=2-1]
	\arrow[from=1-2, to=1-3]
	\arrow[from=1-2, to=2-2]
	\arrow[from=1-3, to=2-3]
	\arrow[from=2-1, to=2-2]
	\arrow[from=2-2, to=2-3]
\end{tikzcd}\]
    This induces a comparison of long exact sequences.
   \[\begin{tikzcd}[cramped,column sep=tiny]
    	{\pi_1\Diff(L_1\setminus\interior{D^3})} & {\pi_0\Diff_{L_1\setminus\interior{D^3}}(M)} & {\pi_0\Diff(M, L_1\setminus\interior{D^3})} & {\pi_0\Diff(L_1\setminus\interior{D^3})} \\
    	{\pi_1\Diff(L_1\setminus\interior{D^3})} & {\pi_0\Diff(L_2\setminus\interior{D^3})} & {\pi_0\Diff(L_2\setminus\interior{D^3}) \times \pi_0\Diff(L_1\setminus\interior{D^3})} & {\pi_0\Diff(L_1\setminus\interior{D^3}).}
    	\arrow["{\partial^\prime}", from=1-1, to=1-2]
    	\arrow[equal, from=1-1, to=2-1]
    	\arrow["{\iota_\ast}", from=1-2, to=1-3]
    	\arrow["{\left(\text{res}^M_{L_2\setminus\interior{D^3}}\right)_\ast}", from=1-2, to=2-2]
    	\arrow["{\left(\text{res}^M_{L_1\setminus\interior{D^3}}\right)_\ast}", from=1-3, to=1-4]
    	\arrow[from=1-3, to=2-3]
    	\arrow[equal, from=1-4, to=2-4]
    	\arrow["\partial", from=2-1, to=2-2]
    	\arrow[from=2-2, to=2-3]
    	\arrow[from=2-3, to=2-4]
    \end{tikzcd}\]
    We have that
    \[\pi_0\Diff_{L_1\setminus\interior{D^3}}(M)\cong \pi_0\Diff_{D^3}(L_2)\cong C_2\]
    and
    \[\pi_0\Diff(L_1\setminus\interior{D^3})\cong \pi_0\Diff_{\text{pt}}(L_1)\cong C_2.\]

    In the above diagram $\partial$ is $0$ by exactness, and $\left(\text{res}^M_{L_2\setminus\interior{D^3}}\right)_\ast$ is an isomorphism
    after considering the commutative diagram from Remark \ref{rem: handy commutative diagram} and Theorem \ref{thm: lens space diffs pi_0's}.
    This means that $\partial^\prime$ is $0$ by commutativity.
    Thus $\iota_\ast$ is injective.
    We furthermore have that $\left(\text{res}^M_{L_1\setminus\interior{D^3}}\right)_\ast$ is surjective by Lemma \ref{lem: extendability based on boundary}.
    Now we apply the 5-lemma to
    \[\begin{tikzcd}[column sep=large]
	0 & {C_2} & {\pi_0\Diff(M, L_1\setminus\interior{D^3})} & {C_2} & 0 \\
	0 & {C_2} & {C_2 \times C_2} & {C_2} & 0
	\arrow["{\partial^\prime}", from=1-1, to=1-2]
	\arrow[equal, from=1-1, to=2-1]
	\arrow["{\iota_\ast}", from=1-2, to=1-3]
	\arrow["\cong", from=1-2, to=2-2]
	\arrow["{\left(\text{res}^M_{L_1\setminus\interior{D^3}}\right)_\ast}", from=1-3, to=1-4]
	\arrow[from=1-3, to=2-3]
	\arrow[from=1-4, to=1-5]
	\arrow["\cong", from=1-4, to=2-4]
	\arrow[equal, from=1-5, to=2-5]
	\arrow["\partial", from=2-1, to=2-2]
	\arrow[from=2-2, to=2-3]
	\arrow[from=2-3, to=2-4]
	\arrow[from=2-4, to=2-5]
\end{tikzcd}\]
    and conclude that $\pi_0 \Diff(M)\cong \pi_0\Diff(M, L_1\setminus\interior{D^3})\cong C_2\times C_2$.
\end{proof}

\section{Computations on the identity path components}\label{the computation}
In this section $L$ will always denote a generic lens space.
We start with establishing some background and notation for the calculation.
\cite[Theorem 15.9]{miln74} implies that 
the rational cohomology ring $H^\ast(B\SO(n))$ is a polynomial ring over $\mathbb{Q}$ generated by
\begin{enumerate}
    \item in case $n$ is odd, the Pontryagin classes $p_1, \dots, p_{(n-1)/2}$
    \item in case $n$ is even, the Pontryagin classes $p_1, \dots, p_{n/2}$ and the Euler class $e$, where $e^2 = p_{n/2}$.
\end{enumerate}   
Here the degrees are as follows: $|p_k| = 4k$ and $|e| = n$.
The inclusion $\SO(n)\times\SO(m)\to \SO(n+m)$ given by block summing induces the Whitney sum on vector bundles, let us give two corollaries of this.

In $H^2(B\SO(2)\times B\SO(2))$ we will denote following the Künneth isomorphism $pr_1^\ast(e)$ as $e\otimes 1$ and $pr_2^\ast(e)$ as $1\otimes e$.
The map 
\[H^\ast(B\SO(4))\to H^\ast(B\SO(2)\times B\SO(2))\] 
induced by the inclusion of $\SO(2)\times \SO(2) \hookrightarrow \SO(4)$ sends $p_1$ to $(e\otimes 1)^2 + (1\otimes e)^2$ and $e$ to $(e\otimes 1)(1\otimes e)$.
Similarly the map 
\[H^\ast(B\SO(4))\to H^\ast(B\SO(3))\]
induced by block sum with the identity, sends $p_1$ to $p_1$ and $e$ to $0$.
\begin{lemma}\label{lem: preliminary s.seq. comparison}
    In the rational cohomological Leray-Serre spectral sequence of 
    \[S^3\to S^3\hq(\SO(2)\times\SO(2))\to B\SO(2)\times B\SO(2)\]
    the differential $d^4\colon E_4^{0, 3}\to E_4^{4, 0}$ sends the fundamental class of $S^3$ to a non-zero multiple of $(e\otimes 1)(1\otimes e)$.
\end{lemma}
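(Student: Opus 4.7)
The strategy is to realize the Borel construction as a linear sphere bundle and then invoke the classical identification of its transgression with the Euler class. The $\SO(2)\times\SO(2)$-action on $S^3$ at issue is the restriction of the standard $\SO(4)$-action through the block-sum embedding, so $S^3$ is the unit sphere of the representation $\mathbb{R}^2\oplus\mathbb{R}^2$ in which the $i$th factor rotates the $i$th summand. Passing to the Borel construction therefore yields a canonical homotopy equivalence
$$S^3\hq(\SO(2)\times\SO(2)) \;\simeq\; S(\gamma_1\boxplus\gamma_2)$$
of spaces over $B\SO(2)\times B\SO(2)$, where $\gamma_i$ denotes the canonical oriented rank-two real bundle over the $i$th factor and $\boxplus$ is the external Whitney sum.

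Next I would invoke the standard fact that for an oriented rank-$n$ real vector bundle $V\to B$, the cohomological Serre spectral sequence of the associated sphere bundle $S^{n-1}\to S(V)\to B$ is concentrated in rows $q=0$ and $q=n-1$, and its only possibly nonzero differential is the transgression $d_n\colon E_n^{0,n-1}\to E_n^{n,0}$, which sends the fundamental class of the fibre to the Euler class $e(V)$ (see for instance \cite{miln74}). Applied to $V=\gamma_1\boxplus\gamma_2$ with $n=4$, this identifies the image of the fundamental class of $S^3$ under $d_4$ with $e(\gamma_1\boxplus\gamma_2)$. Finally, Whitney-sum multiplicativity of the Euler class together with the K\"unneth identification gives
$$e(\gamma_1\boxplus\gamma_2) \;=\; e(\gamma_1)\,e(\gamma_2) \;=\; (e\otimes 1)(1\otimes e),$$
which is a nonzero element (in particular a nonzero scalar multiple of itself) of $H^4(B\SO(2)\times B\SO(2))$.

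There is essentially no genuine obstacle here: the content is in correctly identifying the Borel construction with a linear sphere bundle, which is immediate from the definition of the action. The only mild point is that the precise sign depends on an orientation convention, which is exactly why the statement asserts only a \emph{nonzero multiple} rather than an equality on the nose.
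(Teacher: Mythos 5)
Your proof is correct, but it takes a genuinely different route from the paper's. You realize $S^3\hq(\SO(2)\times\SO(2))$ directly as the unit sphere bundle of the external sum $\gamma_1\boxplus\gamma_2$ over $B\SO(2)\times B\SO(2)$ and then quote two standard facts: the $d_4$-transgression of an oriented linear $S^3$-bundle sends the fibre's fundamental class to the Euler class, and the Euler class is multiplicative under (external) Whitney sum. The one small point worth making explicit is that the torus acting here is $F(S^1\times S^1)<\SO(4)$ in the paper's quaternionic notation; since $F(e^{i\theta},e^{i\phi})$ rotates the first $\mathbb{C}$-coordinate by $\theta-\phi$ and the second by $\theta+\phi$, this subgroup is indeed the block-diagonal $\SO(2)\times\SO(2)$, so your identification of the action (and hence of $e\otimes 1$, $1\otimes e$ with the Euler classes of the two blocks) matches the paper's conventions. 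The paper avoids invoking the sphere-bundle transgression theorem altogether: it first runs the spectral sequence of $S^3\to S^3\hq\SO(4)\to B\SO(4)$, using the homotopical orbit--stabilizer lemma (Lemma \ref{lem: id path component homotopical orbit stabilizer} with Example \ref{eg: S^3 is SO(4) locally retractile}) to identify $S^3\hq\SO(4)\simeq B\SO(3)$ and the known kernel of $H^4(B\SO(4))\to H^4(B\SO(3))$ to conclude that the image of $d^4$ there is $\langle e\rangle$, and then transports this to the torus case by naturality along $i\colon\SO(2)\times\SO(2)\hookrightarrow\SO(4)$ together with $i^\ast(e)=(e\otimes 1)(1\otimes e)$. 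Your argument is shorter, pins the differential down exactly as $\pm(e\otimes 1)(1\otimes e)$ rather than only up to span, and needs no comparison fibration; the paper's version buys the reuse of machinery and of the same $\SO(4)$-comparison diagram that reappears later (e.g.\ in the proof of Theorem \ref{thm: rat cohom of diff(generic lens space) fixed a disc}), at the cost of a more indirect computation.
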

\begin{proof}
    Applying Lemma \ref{lem: id path component homotopical orbit stabilizer} in light of Example \ref{eg: S^3 is SO(4) locally retractile} we have in particular $B\SO(3)\cong S^3\hq \SO(4)$ and under this homeomorphism $S^3\hq\SO(4)\to B\SO(4)$ becomes the map $B\SO(3)\hookrightarrow B\SO(4)$ induced by the inclusion $\SO(3)\hookrightarrow\SO(4)$ as $\SO(3)$ is the stabilizer subgroup of $1 + 0j\in S^3$.    
    
    We inspect the cohomological Leray-Serre spectral sequence of \[S^3\to S^3\hq\SO(4)\to B\SO(4).\]
    Note that the only non-zero differentials are on the $E_4$-page as $E_2^{p, q} \cong H^p(B\SO(4))\otimes H^q(S^3)$.
    Since 
    \[H^4(B\SO(4))\cong E_2^{4, 0}\rrightarrow E_\infty^{4, 0}\cong H^4(S^3\hq\SO(4))\]
    is induced by the map $S^3\hq\SO(4)\to B\SO(4)$ and 
    we conclude that $\image(d^4\colon E_4^{0, 3}\to E_4^{4, 0}) = \langle e\rangle$.

    Now the comparison 
\[\begin{tikzcd}[cramped]
	{S^3} & {S^3\hq\SO(4)} & {B\SO(4)} \\
	{S^3} & {S^3\hq(\SO(2)\times\SO(2))} & {B(\SO(2)\times\SO(2))}
	\arrow[from=1-1, to=1-2]
	\arrow[from=1-2, to=1-3]
	\arrow[shift left, no head, from=2-1, to=1-1]
	\arrow[no head, from=2-1, to=1-1]
	\arrow[from=2-1, to=2-2]
	\arrow[from=2-2, to=1-2]
	\arrow[from=2-2, to=2-3]
	\arrow["i"', from=2-3, to=1-3]
\end{tikzcd}\]
    induces a comparison of spectral sequences.
    We know that $i^\ast(e) = (e\otimes 1)(1\otimes e)$ and from this we conclude.
\end{proof}
\subsection{The diffeomorphisms fixing a point}
We want to compare $\Diff_{\text{pt}}(L)$ to $\Diff_{\text{pt}}^+(S^3)$, but not all of the diffeomorphisms of $S^3$ factor through the quotient, in fact similarly to Lemma \ref{lem: the descenting isometries} exactly those do which are in the normalizer of the $C_m$ subgroup of $\SO(4) = \Isom^+(S^3) < \Diff^+(S^3)$ that we mod out by.
This description gives us the following diagram:
\[\begin{tikzcd}
	{\Diff^{+}(S^3)} & {\Norm_{\Diff^+(S^3)}(C_m)_0} & {\Diff(L)_0} \\
	{\SO(4)} & {\SO(2)\times\SO(2)} & {\Isom(L)_0} \\
	{S^3}\arrow[loop above, out=120, in=70, distance=15] & {S^3}\arrow[loop above, out=120, in=70, distance=15] &  L.\arrow[loop above, out=120, in=70, distance=15]
	\arrow[from=1-2, to=1-1]
	\arrow[from=1-2, to=1-3]
	\arrow["\simeq"', hook, from=2-1, to=1-1]
	\arrow[hook, from=2-2, to=1-2]
	\arrow[from=2-2, to=2-1]
	\arrow["{\sim_\mathbb{Q}}", from=2-2, to=2-3]
	\arrow["\simeq", hook, from=2-3, to=1-3]
	\arrow[equal, from=3-2, to=3-1]
	\arrow["{\sim_\mathbb{Q}}", from=3-2, to=3-3]
\end{tikzcd}\]
\begin{notation}
    By $\sim_\mathbb{Q}$ we denote that the given map induces isomorphism on rational cohomology.
\end{notation}
In this case the maps indicated to induce isomorphisms on rational cohomology do so by virtue of the fact that the maps $F(S^1, S^1) = \SO(2)\times\SO(2)\to\Norm(C_m)_0 = \Dih(S^1\tilde{\times}S^1)_0$ and $S^3\to L$ in the diagram are m-fold coverings.

By naturality we get a zig-zag of homotopy fiber sequences
\begin{equation}\label{eq: emb of a point comparison}
\begin{tikzcd}
	{S^3} & {S^3\hq \SO(4)} & {B\SO(4)} \\
	{S^3} & {S^3\hq (\SO(2)\times \SO(2))} & {B(\SO(2)\times\SO(2))} \\
	L & {L\hq \Isom(L)_0} & {B\Isom(L)_0.}
	\arrow[from=1-1, to=1-2]
	\arrow[from=1-2, to=1-3]
	\arrow[equal, from=2-1, to=1-1]
	\arrow[from=2-1, to=2-2]
	\arrow["{\sim_\mathbb{Q}}", from=2-1, to=3-1]
	\arrow[from=2-2, to=1-2]
	\arrow[from=2-2, to=2-3]
	\arrow[from=2-2, to=3-2]
	\arrow[from=2-3, to=1-3]
	\arrow["{\sim_\mathbb{Q}}", from=2-3, to=3-3]
	\arrow[from=3-1, to=3-2]
	\arrow[from=3-2, to=3-3]
\end{tikzcd}
\end{equation}
Here the middle map of the bottom comparison is also a rational cohomology isomorphism by the naturality properties of the Leray-Serre spectral sequences, see \cite[Proposition 5.13]{HatchSSeq}.

\begin{theorem}\label{thm: rat cohom of diff(generic lens space) fixed a point}
    For a generic lens space $L$,
    \[H^\ast(B\Diff_{\text{pt}}(L)_0)\cong \mathbb{Q}[\mu, \eta]/( \mu\eta)\]
    where $|\mu|=|\eta| = 2$.
    Furthermore there is a surjection of graded algebras
    \[H^\ast(B\SO(2)\times B\SO(2)) \rrightarrow H^\ast(B\Diff_{\text{pt}}(L)_0)\]
    induced by the zig-zag $B\SO(2)\times B\SO(2) \overset{\sim_\mathbb{Q}}{\to} B\Isom(L)_0 \leftarrow L\hq\Isom(L)_0 \simeq B\Diff_{\text{pt}}(L)_0$,
    sending the pullbacks $1\otimes e$ and $e\otimes 1$ of the Euler class $e\in H^\ast(B\SO(2))$ along the two projections to $\mu$ and $\eta$.
\end{theorem}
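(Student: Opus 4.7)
The plan is to reduce the computation to the Leray--Serre spectral sequence of the middle row of diagram \eqref{eq: emb of a point comparison}, then transport the answer across the rational equivalences in that diagram.

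First, I would establish the homotopy equivalence $B\Diff_\text{pt}(L)_0 \simeq L\hq\Isom(L)_0$. By Theorem \ref{thm: generalized smale conj} the inclusion $\Isom(L)\hookrightarrow\Diff(L)$ is a weak equivalence, and $L$ is $\Isom(L)$-locally retractile with transitive action (the argument of Example \ref{eg: S^3 is SO(4) locally retractile} descends through the covering $S^3\to L$). The stabilizer of the chosen point is $\Diff_\text{pt}(L)$, and by Theorem \ref{thm: lens space diffs pi_0's} the inclusion $\Diff_\text{pt}(L)\hookrightarrow\Diff(L)$ induces a bijection on $\pi_0$. Thus the hypotheses of Lemma \ref{lem: id path component homotopical orbit stabilizer} are satisfied and we obtain the desired equivalence.

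Second, I would compute $H^\ast(S^3\hq(\SO(2)\times\SO(2)))$ via the Leray--Serre spectral sequence of the middle fiber sequence in \eqref{eq: emb of a point comparison}. The $E_2$-page is $\mathbb{Q}[e\otimes 1,\,1\otimes e]\otimes \Lambda(x)$ with $|x|=3$ the fundamental class of $S^3$, so for dimensional reasons only $d_4\colon E_4^{0,3}\to E_4^{4,0}$ can be nonzero. Lemma \ref{lem: preliminary s.seq. comparison} identifies $d_4(x)$ with a nonzero scalar multiple of $(e\otimes 1)(1\otimes e)$, and the Leibniz rule then determines $d_4$ on all of $E_4^{\ast,3}$ as multiplication by this element. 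Because $(e\otimes 1)(1\otimes e)$ is a nonzerodivisor in the polynomial ring $\mathbb{Q}[e\otimes 1,\,1\otimes e]$, this differential is injective, and $E_5=E_\infty$ is concentrated in the row $q=0$, equal to $\mathbb{Q}[e\otimes 1,\,1\otimes e]/\bigl((e\otimes 1)(1\otimes e)\bigr)$. Since $E_\infty$ lives in a single row, there are no multiplicative extension problems, and the edge map coming from the projection to the base realizes the quotient $\mathbb{Q}[e\otimes 1,\,1\otimes e]\twoheadrightarrow \mathbb{Q}[e\otimes 1,\,1\otimes e]/\bigl((e\otimes 1)(1\otimes e)\bigr)$ as an isomorphism onto $H^\ast(S^3\hq(\SO(2)\times\SO(2)))$.

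Third, I would transport this computation across the two rational cohomology isomorphisms in diagram \eqref{eq: emb of a point comparison}: $B\SO(2)\times B\SO(2)\sim_\mathbb{Q} B\Isom(L)_0$ and $S^3\hq(\SO(2)\times\SO(2))\sim_\mathbb{Q} L\hq\Isom(L)_0$. Combined with $B\Diff_\text{pt}(L)_0\simeq L\hq\Isom(L)_0$, this yields $H^\ast(B\Diff_\text{pt}(L)_0)\cong \mathbb{Q}[\mu,\eta]/(\mu\eta)$ with $\mu,\eta$ the images of $e\otimes 1$ and $1\otimes e$, together with the asserted surjection.

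The main obstacle is the spectral sequence differential, and that is precisely the content of Lemma \ref{lem: preliminary s.seq. comparison}, which has already been proved; the rest is formal manipulation of the comparison diagram and the standard multiplicative properties of the Leray--Serre spectral sequence \cite[Proposition 5.13]{HatchSSeq}.
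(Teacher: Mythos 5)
Your spectral-sequence computation (your second and third steps) is exactly the paper's argument and is correct, but the justification of your first step contains a genuine error. You claim that $L$ is $\Isom(L)$-locally retractile with transitive action, obtained by descending the $\SO(4)$-local retraction of Example \ref{eg: S^3 is SO(4) locally retractile} through the covering $S^3\to L$. This is false: for a generic lens space $\Isom(L)_0$ is a $2$-torus (covered by $F(S^1\times S^1)$), and a $2$-dimensional compact group cannot act transitively on the $3$-manifold $L$ — and local retractileness would force transitivity by Lemma \ref{local retractileness}(4), so it fails too. The proposed descent also breaks concretely: the retraction $\xi(q)=F(q,q_0)$ takes values in all of $\SO(4)$, and $F(q,q_0)$ need not normalize $C_m$, so it does not induce an isometry of $L$. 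Relatedly, if you were acting by $\Isom(L)$ the stabilizer of the point would be $\Isom_{\text{pt}}(L)$, not $\Diff_{\text{pt}}(L)$ as you wrote, and $\Isom_{\text{pt}}(L)_0$ is much smaller than $\Diff_{\text{pt}}(L)_0$ (the latter has classifying space with infinite-dimensional rational cohomology), so the generalized Smale conjecture cannot simply be applied at the level of point stabilizers.

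The repair is the paper's route: $L\cong\Emb(\text{pt},L)$ is $\Diff(L)$-locally retractile by Theorem \ref{Emb is locally retractile}; Lemma \ref{local retractileness}(3) and (4) give $\Diff(L)_0$-local retractileness and transitivity; Theorem \ref{thm: lens space diffs pi_0's} supplies the $\pi_0$-bijection, so Lemma \ref{lem: id path component homotopical orbit stabilizer} yields $B\Diff_{\text{pt}}(L)_0\simeq L\hq\Diff(L)_0$; and only then does Theorem \ref{thm: generalized smale conj} enter, identifying $L\hq\Isom(L)_0\simeq L\hq\Diff(L)_0$ by comparing the fiber sequences $L\to L\hq G\to BG$ for $G=\Isom(L)_0\hookrightarrow\Diff(L)_0$. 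With that substitution the rest of your argument (the $E_4$-differential via Lemma \ref{lem: preliminary s.seq. comparison}, injectivity of multiplication by $(e\otimes 1)(1\otimes e)$, collapse at $E_5$ concentrated in the bottom row, absence of extension problems, and transport along the two rational equivalences) goes through and is, if anything, spelled out in more detail than in the paper's own write-up.
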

\begin{proof}
    By Theorem \ref{Emb is locally retractile}, $\Emb(\text{pt}, L)\cong L$ is $\Diff(L)$-locally retractile.
    Lemma \ref{local retractileness} (3) and (4) implies that it is also $\Diff(L)_0$-locally retractile and that the $\Diff(L)_0$ action on $L$ is transitive.
    Lemma \ref{lem: id path component homotopical orbit stabilizer} and Theorem \ref{thm: lens space diffs pi_0's} implies that $\Diff_\text{pt}(L)_0\simeq \Emb(\text{pt}, L)\hq \Diff(L)_0$.
    Finally, by Theorem \ref{thm: generalized smale conj} we have
    \[L\hq \Isom(L)_0 \simeq B\Diff_{\text{pt}}(L)_0.\]
    
    By the comparison (\ref{eq: emb of a point comparison}) we reduce to computing $H^\ast(S^3\hq(\SO(2)\times\SO(2)))$.
    Using Lemma \ref{lem: preliminary s.seq. comparison} and the fact that the only non-zero differentials in the cohomological Leray Serre spectral sequence of 
    \[S^3\to S^3\hq(\SO(2)\times \SO(2))\to B\SO(2)\times B\SO(2)\]
    are on the $E_4$-page, we conclude that the spectral sequence collapses on the $E_5$-page, and examining the cup product structure that the $d_4$ differentials hit everything in the ideal $((e\otimes 1)(1\otimes e))$ and leave only the zeroth row to be non-zero in $E_\infty$.
\end{proof}
\subsection{The diffeomorphisms fixing a disc}
Similarly to before we use the diagram
\[\begin{tikzcd}
	{\SO(4)} & {\SO(2)\times\SO(2)} & {\Isom(L)_0} \\
	{\Emb^{+}(D^3, S^3)}\arrow[loop above, out=120, in=70, distance=15] & {\Emb^{+}(D^3, S^3)}\arrow[loop above, out=120, in=70, distance=15] & \Emb^{+}(D^3, L).\arrow[loop above, out=120, in=70, distance=15]
	\arrow[from=1-2, to=1-1]
	\arrow["{\sim_\mathbb{Q}}", from=1-2, to=1-3]
	\arrow[equal, from=2-2, to=2-1]
	\arrow["{\sim_\mathbb{Q}}", from=2-2, to=2-3]
\end{tikzcd}\]
This diagram implies by naturality that we have a zig-zag of fiber sequences as follows:
\begin{equation}\label{eq: second fib seq comparison}
\begin{tikzcd}[cramped,column sep=small]
	{\Emb^{+}(D^3, S^3)} & {\Emb^{+}(D^3, S^3)\hq \SO(4)} & {B\SO(4)} \\
	{\Emb^{+}(D^3, S^3)} & {\Emb^{+}(D^3, S^3)\hq (\SO(2)\times \SO(2))} & {B(\SO(2)\times\SO(2))} \\
	\Emb^{+}(D^3, L) & {\Emb^{+}(D^3, L)\hq \Isom(L)_0} & {B\Isom(L)_0.}
	\arrow[from=1-1, to=1-2]
	\arrow[from=1-2, to=1-3]
	\arrow[equal, from=2-1, to=1-1]
	\arrow[from=2-1, to=2-2]
	\arrow["{\sim_\mathbb{Q}}", from=2-1, to=3-1]
	\arrow[from=2-2, to=1-2]
	\arrow[from=2-2, to=2-3]
	\arrow[from=2-2, to=3-2]
	\arrow[from=2-3, to=1-3]
	\arrow["{\sim_\mathbb{Q}}", from=2-3, to=3-3]
	\arrow[from=3-1, to=3-2]
	\arrow[from=3-2, to=3-3]
\end{tikzcd}    
\end{equation}

\begin{theorem}\label{thm: rat cohom of diff(generic lens space) fixed a disc}
    For a generic lens space $L$,
    \[H^\ast(B\Diff_{D^3}(L)_0)\cong \mathbb{Q}[\mu, \eta]/( \mu^2+\eta^2, \mu\eta)\]
    where $|\mu|=|\eta| = 2$.
    Furthermore there is a surjection of graded algebras
    \[H^\ast(B\SO(2)\times B\SO(2)) \rrightarrow H^\ast(B\Diff_{D^3}(L)_0)\]
    induced by the zig-zag $B(\SO(2)\times \SO(2))\overset{\sim_\mathbb{Q}}{\to}B\Isom(L)_0\leftarrow \Emb^+(D^3, L)\hq \Isom(L)_0$
    sending the pullbacks $1\otimes e$ and $e\otimes 1$ of the Euler class $e\in H^\ast(B\SO(2))$ along the two projections to $\mu$ and $\eta$.
\end{theorem}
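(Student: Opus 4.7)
The plan is to mirror the proof of Theorem \ref{thm: rat cohom of diff(generic lens space) fixed a point}, now with the fiber $\Emb^+(D^3, L)$ in place of $L = \Emb(\text{pt}, L)$. First, I would identify $B\Diff_{D^3}(L)_0$ with $\Emb^+(D^3, L)\hq \Isom(L)_0$: by Theorem \ref{Emb is locally retractile}, $\Emb(D^3, L)$ is $\Diff(L)$-locally retractile; Lemma \ref{local retractileness} (3)--(4) together with Theorem \ref{thm: generalized smale conj} transfer this to $\Emb^+(D^3, L)$ being $\Isom(L)_0$-locally retractile with transitive action; the stabilizer of a chosen embedding is $\Diff_{D^3}(L)$, and Theorem \ref{thm: lens space diffs pi_0's} supplies the $\pi_0$-bijection required by Lemma \ref{lem: id path component homotopical orbit stabilizer}. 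The comparison (\ref{eq: second fib seq comparison}) then reduces the problem to computing $H^\ast(\Emb^+(D^3, S^3)\hq(\SO(2)\times\SO(2)))$, exactly as in the paragraph following (\ref{eq: emb of a point comparison}), via naturality of the Leray--Serre spectral sequence to show that the middle vertical map of (\ref{eq: second fib seq comparison}) is a rational equivalence.

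To carry out this computation I would compare the middle and top rows of (\ref{eq: second fib seq comparison}). By Theorem \ref{embeddings of discs are framings} and the fact that $\SO(4)$ acts freely and transitively on $\Fr^+(TS^3)$, the top total space $\Emb^+(D^3, S^3)\hq\SO(4)$ is contractible, so the top-row Serre spectral sequence is the standard one for the universal $\SO(4)$-bundle. Rationally, $H^\ast(\SO(4)) = \Lambda(x_3, y_3)$ and the transgression $d_4\colon H^3(\SO(4))\to H^4(B\SO(4))$ must be a linear isomorphism onto $\mathbb{Q}\{p_1, e\}$; choosing a basis I may assume $d_4(x_3) = p_1$ and $d_4(y_3) = e$. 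By naturality, the $d_4$ of the middle-row spectral sequence is obtained by applying the restriction map $i^\ast\colon H^\ast(B\SO(4))\to H^\ast(B(\SO(2)\times\SO(2)))$ recalled before Lemma \ref{lem: preliminary s.seq. comparison}, yielding
\[d_4(x_3) = (e\otimes 1)^2 + (1\otimes e)^2 =: P, \qquad d_4(y_3) = (e\otimes 1)(1\otimes e) =: E.\]

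The $E_4$-page of the middle row is then the Koszul complex $\mathbb{Q}[e\otimes 1, 1\otimes e]\otimes\Lambda(x_3, y_3)$ for the pair $(P, E)$. The crux is the verification that $P, E$ is a regular sequence in $\mathbb{Q}[a, b] := \mathbb{Q}[e\otimes 1, 1\otimes e]$: the element $P = a^2+b^2$ is irreducible over $\mathbb{Q}$ (since $-1$ is not a sum of squares in $\mathbb{Q}$), so $\mathbb{Q}[a, b]/(P)$ is an integral domain, and $E = ab$ is nonzero there by a short degree argument, hence not a zero divisor. Consequently the Koszul homology is concentrated in row $0$ and equal to $\mathbb{Q}[a, b]/(P, E)$, so no further differentials are possible and the spectral sequence collapses at $E_5$ with $E_\infty$ supported on the bottom row.

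Because $E_\infty$ lives entirely on row $0$, there are no hidden multiplicative extensions, and the edge map gives a surjection of graded algebras
\[H^\ast(B(\SO(2)\times\SO(2))) \rrightarrow H^\ast(\Emb^+(D^3, S^3)\hq(\SO(2)\times\SO(2))) \cong \mathbb{Q}[\mu, \eta]/(\mu^2+\eta^2, \mu\eta),\]
identifying $e\otimes 1\mapsto \mu$ and $1\otimes e\mapsto \eta$. Composing with the rational equivalences in (\ref{eq: second fib seq comparison}) yields the claimed surjection and ring description of $H^\ast(B\Diff_{D^3}(L)_0)$. The main obstacle I expect is pinning down the middle-row $d_4$-differentials via the naturality comparison and verifying the regular-sequence property cleanly; once these are in place, the Koszul collapse and the final algebra identification are formal.
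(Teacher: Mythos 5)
Your proposal is correct and follows essentially the same route as the paper: the orbit--stabilizer identification $B\Diff_{D^3}(L)_0 \simeq \Emb^+(D^3,L)\hq\Isom(L)_0$, the reduction via the comparison (\ref{eq: second fib seq comparison}), and the Leray--Serre spectral sequence over $B(\SO(2)\times\SO(2))$ compared with the one over $B\SO(4)$, whose total space is contractible. The only differences are cosmetic: where the paper pins down the transgression of a fiber class geometrically via Lemma \ref{lem: preliminary s.seq. comparison} and then argues the collapse by a dimension count, you choose a transgression-adapted basis of $H^3$ of the fiber and package the collapse as a Koszul regular-sequence argument for the pair $\bigl((e\otimes 1)^2+(1\otimes e)^2,\,(e\otimes 1)(1\otimes e)\bigr)$, which is equally valid.
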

\begin{proof}
    $L$ is parallelizable, meaning $\Fr^+(L)\cong L\times \GL_3^+(\mathbb{R})\simeq L\times \SO(3)$, because it is a closed orientable 3-manifold (see \cite{bened18}).  
    Thus Theorem \ref{embeddings of discs are framings} implies $\Emb^+(D^3, L)\simeq L\times \SO(3)$.
    This means it is path connected, which is instrumental in using the homotopy orbit stabilizer lemma. 
    
    By Theorem \ref{Emb is locally retractile}, $\Emb(D^3, L)\cong L$ is $\Diff(L)$-locally retractile.
    Lemma \ref{local retractileness} (3) and (4) implies that it is also $\Diff(L)_0$-locally retractile and that the $\Diff(L)_0$ action on $L$ is transitive.
    Lemma \ref{lem: id path component homotopical orbit stabilizer} and Theorem \ref{thm: lens space diffs pi_0's} implies that $\Diff_{D^3}(L)_0\simeq \Emb(D^3, L)\hq \Diff(L)_0$.
    Finally, by Theorem \ref{thm: generalized smale conj} we have
    \[\Emb^+(D^3, L)\hq \Isom(L)_0\simeq B\Diff_{D^3}(L)_0.\]
    Similar argument shows
    \[\Emb^+(D^3, S^3)\hq\SO(4)\simeq B\Diff_{D^3}(S^3)\simeq \text{pt}.\]
    By Theorem \ref{embeddings of discs are framings} we also have that $\Emb^+(D^3, S^3)\simeq S^3\times \SO(3)$.
    Inspecting (\ref{eq: second fib seq comparison}) we can see that again we may reduce to computing \[H^\ast(\Emb^+(D^3, S^3)\hq(\SO(2)\times\SO(2))).\]
    Let us denote $E_\bullet^{\bullet, \bullet}$ the cohomological Leray Serre spectral sequence associated to 
    \[\Emb^+(D^3, S^3)\to \Emb^+(D^3, S^3)\hq\SO(4)\to B\SO(4).\]
    Let us denote $D_\bullet^{\bullet, \bullet}$ the cohomological Leray Serre spectral sequence associated to 
    \[\Emb^+(D^3, S^3)\to \Emb^+(D^3, S^3)\hq(\SO(2)\times\SO(2))\to B\SO(2)\times B\SO(2).\]
    Note that $E_2^{p, q}\cong E_2^{p, 0}\otimes E_2^{0, q}$ and also $D_2^{p, q}\cong D_2^{p, 0}\otimes D_2^{0, q}$.
    Let us use the notation
    \[H^\ast(\Emb^{+}(D^3, S^3))\cong H^\ast(S^3)\otimes_\mathbb{Q} H^\ast(\SO(3), \mathbb{Q})\cong \mathbb{Q}[\alpha, \beta]/\langle \alpha^2, \beta^2\rangle\]
    and
    $\mu = e\otimes 1$, $\eta = 1\otimes e\in H^2(B\SO(2)\times B\SO(2))$.
    With these notations the comparison of the fiber sequences $E_\bullet^{\bullet, \bullet}$ and $D_\bullet^{\bullet, \bullet}$ is laid out in Figure \ref{fig:sseqs2}, where the dots denote non-zero vector spaces that have too many generators to list.

    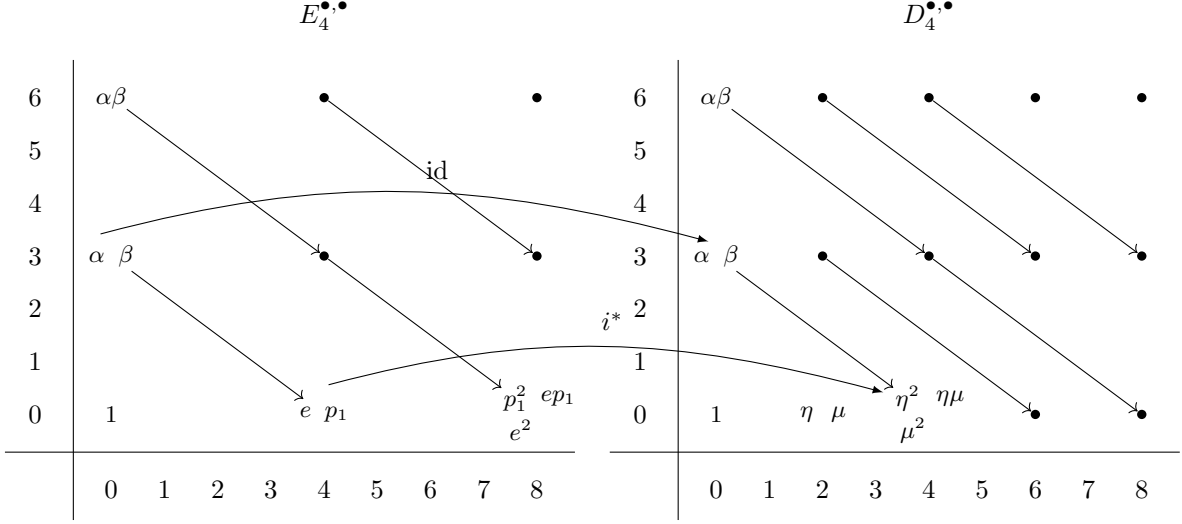
\begin{figure}[ht]
    \advance\leftskip-1cm
    \caption{Comparing spectral sequences}
    \begin{sseqpage}[title = $E_4^{\bullet, \bullet}$,
    cohomological Serre grading,
    class pattern = myclasspattern,
    classes = { draw = none },
    class labels = { font = \small },
    xscale = 0.7,
    yscale = 0.7]
    \class["1"](0,0)
    \class["e\;\;p_1"](4, 0)
    \class["p_1^2"](8, 0)
    \class["e^2"](8, 0)
    \class["e p_1"](8, 0)
    \class["\alpha\;\;\beta"](0, 3)
    \class["\alpha \beta"](0, 6)
    \class[{ black, fill }](4, 3)
    \class[{ black, fill }](4, 6)
    \class[{ black, fill }](8, 3)
    \class[{ black, fill }](8, 6)
    \d4(0,3) 
    \d4(0, 6)
    \d4(4, 3)
    \d4(4, 6)
    \end{sseqpage}
    \quad
    \begin{sseqpage}[title = $D_4^{\bullet, \bullet}$,
    cohomological Serre grading,
    class pattern = myclasspattern,
    classes = { draw = none },
    class labels = { font = \small },
    xscale = 0.7,
    yscale = 0.7]
    \class["1"](0,0)
    \class["\eta"](2, 0) \class["\mu"](2, 0)
    \class["\eta^2"](4, 0)
    \class["\mu^2"](4, 0)
    \class["\eta \mu"](4, 0)
    \class[{ black, fill }](6, 0)
    \class[{ black, fill }](8, 0)
    \class["\alpha\;\;\beta"](0, 3)
    \class["\alpha \beta"](0, 6)
    \class[{ black, fill }](2, 3)
    \class[ { black, fill }](2, 6)
    \class[ { black, fill }](4, 6)
    \class[ { black, fill }](4, 3)
    \class[ { black, fill }](6, 6)
    \class[ { black, fill }](6, 3)
    \class[ { black, fill }](8, 6)
    \class[ { black, fill }](8, 3)
    \d4(0,3) 
    \d4(0, 6)
    \d4(2,3) 
    \d4(2, 6)
    \d4(4, 3)
    \d4(4, 6)
    \end{sseqpage}
    \begin{tikzpicture}[overlay, remember picture]
    \draw[-latex] (-14.3, 3.8) to[out=15,in=165] (-6.3, 3.7)  node [above left = 0.7 and 3.3] {$\text{id}$};
    \draw[-latex] (-11.3, 1.8) to[out=15,in=165] (-4, 1.7)
    node [above left = 0.7 and 3.3] {$i^\ast$};
    \end{tikzpicture}
    \label{fig:sseqs2}
    \end{figure}

    Firstly, we want that $\langle\prescript{E}{}d_4^{0, 3}(\alpha)\rangle=\langle e \rangle$.
    To see this we use a comparison of spectral sequences through the following diagram:
    \[\begin{tikzcd}
        {S^3} & {S^3\hq \SO(4)} & {B\SO(4)}\\
	{S^3\times \SO(3)} & {(S^3\times \SO(3))\hq \SO(4)} & {B\SO(4).}
	\arrow[from=1-1, to=1-2]
	\arrow[from=1-2, to=1-3]
	\arrow[from=2-1, to=1-1]
	\arrow[from=2-1, to=2-2]
	\arrow[from=2-2, to=1-2]
	\arrow[from=2-2, to=2-3]
	\arrow["\simeq", from=2-3, to=1-3]
\end{tikzcd}\]
    Where the map $S^3\times \SO(3)\to S^3$ is the projection onto the first coordinate.
    This is because the weak equivalence $\Emb^+(D^3, S^3)\simeq S^3\times \SO(3)$ records the point the origin is sent to, and the differential at the origin orthogonalized via Gram-Schmidt.    
    Therefore under this weak equivalence, the map $\Emb^+(D^3, S^3)\to \Emb(\text{pt}, S^3)$ induced by the inclusion of the origin into $D^3$, becomes the projection.

    This means that $\alpha\in H^\ast(S^3)$ is sent to $\alpha = pr_1^\ast(\alpha)\in H^\ast(S^3\times \SO(3))$ by the map induced on cohomology by the comparison of the fibers, and thus by Lemma \ref{lem: preliminary s.seq. comparison} we see that indeed $\langle\prescript{E}{}d_4^{0, 3}(\alpha)\rangle=\langle e \rangle$.

    Since $E_\infty^{\ast, \ast} \cong 0$ and the only non-trivial differentials in $E_\bullet^{\bullet, \bullet}$ are on the $E_4$-page, we have to have that $\langle\prescript{E}{}d_4^{0, 3}(\beta)\rangle=\langle p_1 \rangle$.
    We can see that the comparison yields
    \[\langle\prescript{D}{}d_4^{0, 3}(\alpha)\rangle=\langle \mu\eta \rangle\]
    and
    \[\langle\prescript{D}{}d_4^{0, 3}(\beta)\rangle=\langle \mu^2+\eta^2 \rangle.\]
    We have
    \[\dim(E_2^{2k, 6}) + \dim(E_2^{2k+8, 0}) = \dim(E_2^{2k+4, 3})\]
    and $\dim(E_2^{6, 0}) = \dim(E_2^{2, 3})$.
    Furthermore inspecting the multiplicative structure we find that $\prescript{D}{}d_4^{2k, 6}\colon D_4^{2k, 6}\to D_4^{2k+4, 3}$ sends the generators of $D_4^{2k, 6}$ to an independent set in $D_4^{2k+4, 3}$ and that all the generators of $D_4^{2k+6, 0}$ are hit by $\prescript{D}{}d_4^{2k+2, 3}$ for all $k\geq 0$.
    This means that in fact in the $E_\infty$-page, the only non-trivial entries that remain are $D_\infty^{0, 0}$, $D_\infty^{2, 0}$, and $D_\infty^{4, 0}$.
    From this we conclude.
\end{proof}

\subsection{The whole identity path component}
To calculate $H^k(B\Diff(M)_0)$, we just have to run
the cohomological Leray-Serre spectral sequence of
\[B\Diff_{L_2\setminus\interior{D^3}}(M)_0\to B\Diff(M, L_2\setminus\interior{D^3})_0\to B\Diff(L_2\setminus\interior{D^3})_0.\]
Here the base is weakly equivalent to $B\Diff_{\text{pt}}(L_2)_0$ and the fiber is weakly equivalent to $B\Diff_{D^3}(L_1)$.
Let us recall our results from Theorem \ref{thm: rat cohom of diff(generic lens space) fixed a point} and Theorem \ref{thm: rat cohom of diff(generic lens space) fixed a disc}.
\[H^k(B\Diff_{D^3}(L)_0)\cong \begin{cases}
    \mathbb{Q}\text{ if } k= 0\\
    \mathbb{Q}\langle \mu,  \eta\rangle \text{ if } k = 2\\
    \mathbb{Q}\langle \mu^2\rangle \text{ if } k = 4\\
    0\text{ otherwise}
\end{cases}\]
Where the cup product structure is so that $\mu^2 = -\eta^2$ and $\mu\eta = 0$.
\[H^k(B\Diff_{\text{pt}}(L)_0)\cong \begin{cases}
    \mathbb{Q}\text{ if } k= 0\\
    \mathbb{Q}\langle \mu^{k/2},  \eta^{k/2} \rangle\text{ if } k \text{ is even}\\
    0\text{ otherwise}
\end{cases}\]
These imply that the $E_2$-page we are interested in looks as follows:
\[\begin{sseqpage}[title = The main spectral sequence,
    cohomological Serre grading,
    class pattern = myclasspattern,
    classes = { draw = none },
    class labels = { font = \small },
    xscale = 0.7,
    yscale = 0.7]
    \class["\mathbb{Q}"](0,0)
    \class["\mathbb{Q}^2"](0, 2)
    \class["\mathbb{Q}"](0, 4)
    \class["\mathbb{Q}^2"](2, 0)
    \class["\mathbb{Q}^2"](4, 0)
    \class["\mathbb{Q}^2"](6, 0)
    \class["\mathbb{Q}^4"](2, 2)
    \class["\mathbb{Q}^2"](2, 4)
    \class["\mathbb{Q}^2"](4, 4)
    \class["\mathbb{Q}^2"](6, 4)
    \class["\mathbb{Q}^4"](4, 2)
    \class["\mathbb{Q}^4"](6, 2)
    \class["\dots"](8, 2)
    \class["\dots"](8, 0)
    \class["\dots"](8, 4)
\end{sseqpage}\]
but since we are in even cohomological grading this collapses on the $E_2$-page and therefore we get that
\[H^n(B\Diff(M)_0)\cong \bigoplus_{k+l = n}H^k(B\Diff_{L_2\setminus\interior{D^3}}(M)_0)\otimes_{\mathbb{Q}} H^l(B\Diff(L_2\setminus\interior{D^3})_0).\]
\begin{theorem}\label{thm: main result}
    Let $L_1$ and $L_2$ be generic 3-dimensional lens spaces that are not diffeomorphic to each other, and $M \cong L_1\#L_2$.
    \[H^k(B\Diff(M)_0)\cong \begin{cases}
        \mathbb{Q} \;\,\text{ if }  k = 0\\
        \mathbb{Q}^4 \text{ if }  k = 2\\
        \mathbb{Q}^7 \text{ if } k = 4\\
        \mathbb{Q}^8 \text{ if $k$ is even and }\geq 6 \\
        0\text{ otherwise}
    \end{cases}\]
\end{theorem}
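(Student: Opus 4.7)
The plan is to read off the cohomology from the cohomological Leray--Serre spectral sequence of the homotopy fiber sequence (\ref{main fib seq}), using the identifications that were already installed. By Theorem \ref{theorem of Hatcher}, the middle term $B\Diff(M, L_2\setminus\interior{D^3})_0$ is weakly equivalent to $B\Diff(M)_0$. Contractibility of collars together with Lemma \ref{lem: cut out disc} identifies the fiber with $B\Diff_{D^3}(L_1)_0$ and the base with $B\Diff_{\text{pt}}(L_2)_0$, whose rational cohomologies are provided by Theorems \ref{thm: rat cohom of diff(generic lens space) fixed a disc} and \ref{thm: rat cohom of diff(generic lens space) fixed a point} respectively. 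Since the base is simply connected (its fundamental group is $\pi_0\Diff_{\text{pt}}(L_2)_0 = 0$), the $E_2$-page is just the untwisted tensor product.

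The next step is to observe that the spectral sequence collapses at $E_2$ by a parity argument. By Theorems \ref{thm: rat cohom of diff(generic lens space) fixed a disc} and \ref{thm: rat cohom of diff(generic lens space) fixed a point}, both fiber and base have cohomology concentrated in even degrees, so $E_2^{p,q}$ can be nonzero only when both $p$ and $q$ are even. A differential $d_r\colon E_r^{p,q}\to E_r^{p+r,\,q-r+1}$ can only be nonzero if both source and target have even second coordinate, which would force $r$ to be even and $r-1$ to be even simultaneously. Thus $d_r = 0$ for all $r\geq 2$, and $E_\infty = E_2$.

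Given the collapse, there is an abstract isomorphism of graded vector spaces
\[H^n(B\Diff(M)_0) \cong \bigoplus_{p+q=n} H^p(B\Diff_{\text{pt}}(L_2)_0)\otimes_{\mathbb{Q}} H^q(B\Diff_{D^3}(L_1)_0),\]
and the theorem follows by evaluating dimensions. The Poincaré series of the fiber is $1 + 2t^2 + t^4$ and that of the base is $1 + 2t^2 + 2t^4 + 2t^6 + \cdots$, so the product gives $1 + 4t^2 + 7t^4 + 8t^{2k}$ for every $k\geq 3$, which is exactly the statement.

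There is no real obstacle here beyond the bookkeeping, since all the heavy lifting---the identifications of the spaces in the fiber sequence, the cohomology of the endpoints, and the fact that both are concentrated in even degrees---has been carried out in the preceding sections. The only thing to double-check is the simple connectivity of the base, which is automatic because we are taking the identity path component.
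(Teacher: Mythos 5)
Your proposal is correct and follows essentially the same route as the paper: the Leray--Serre spectral sequence of the fiber sequence (\ref{main fib seq}) restricted to identity components, with fiber and base identified as $B\Diff_{D^3}(L_1)_0$ and $B\Diff_{\text{pt}}(L_2)_0$, collapse at $E_2$ by the even-degree parity argument, and the dimension count via the product of Poincaré series. Your explicit remark that the base is simply connected (so the coefficients are untwisted) is a detail the paper leaves implicit, but the argument is otherwise the same.
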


Now we will give a more information about the cup product structure:
Figure \ref{fig:main ho fib seq comp} shows a comparison that we also used in the proof of Theorem \ref{thm: mapping class group}.
\begin{figure}[ht]
    \caption{Comparing the homotopy fiber sequences}
    \[\begin{tikzcd}
	{B\Diff_{L_1\setminus\interior{D^3}}(M)_0} & {B\Diff(M, L_1\setminus\interior{D^3})_0} & {B\Diff(L_1\setminus\interior{D^3})_0} \\
	{B\Diff(L_2\setminus\interior{D^3})_0} & {B\Diff(L_2\setminus\interior{D^3})_0 \times B\Diff(L_1\setminus\interior{D^3})_0} & {B\Diff(L_1\setminus\interior{D^3})_0}
	\arrow[from=1-1, to=1-2]
	\arrow[from=1-1, to=2-1]
	\arrow[from=1-2, to=1-3]
	\arrow[from=1-2, to=2-2]
	\arrow[equal, from=1-3, to=2-3]
	\arrow[from=2-1, to=2-2]
	\arrow[from=2-2, to=2-3]
\end{tikzcd}\]
\label{fig:main ho fib seq comp}
\end{figure}
From it we get a comparison of the induced cohomological Leray-Serre spectral sequences.
The map $B\Diff_{L_1\setminus\interior{D^3}}(M)_0 \to B\Diff(L_2\setminus\interior{D^3})_0$ corresponds to $B\Diff_{D^3}(L_2)_0\to B\Diff_{\text{pt}}(L_1)_0$ under the commutative diagram from Remark \ref{rem: handy commutative diagram}.
As a consequence of Theorem \ref{thm: rat cohom of diff(generic lens space) fixed a point} and  Theorem \ref{thm: rat cohom of diff(generic lens space) fixed a disc} we have the following:
\begin{corollary}\label{lem: surj on cohom of fiber}
    The map induced by the inclusion $B\Diff_{D^3}(L_2)_0\to B\Diff_{\text{pt}}(L_1)_0$ induces a surjection on rational cohomology.
\end{corollary}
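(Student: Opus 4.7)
The plan is to exhibit the inclusion as a morphism between the cohomology ring presentations from Theorems~\ref{thm: rat cohom of diff(generic lens space) fixed a point} and \ref{thm: rat cohom of diff(generic lens space) fixed a disc}, and then read off surjectivity from the fact that in both theorems the generators $\mu, \eta$ are the images of the \emph{same} Euler classes $e \otimes 1$ and $1\otimes e$ from $B(\SO(2)\times \SO(2))$. Via Remark~\ref{rem: handy commutative diagram}, the map in question is realised concretely by the forgetful group inclusion $\Diff_{D^3}(L)_0\hookrightarrow \Diff_{\text{pt}}(L)_0$ (relaxing pointwise fixity of the disc to fixity of its centre) for the relevant lens space $L$; this is the first-column map of Figure~\ref{fig:main ho fib seq comp} after passing through the zig-zag of equivalences in the Remark.

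First I would model the inclusion on the level of homotopy orbit spaces by the $\Isom(L)_0$-equivariant evaluation-at-origin map $\Emb^+(D^3,L)\to \Emb(\text{pt},L)=L$. Under the identifications $\Emb^+(D^3,L)\hq\Isom(L)_0\simeq B\Diff_{D^3}(L)_0$ and $L\hq\Isom(L)_0\simeq B\Diff_{\text{pt}}(L)_0$ built in the two theorems via Lemma~\ref{lem: id path component homotopical orbit stabilizer}, the induced map of orbit spaces realises the desired inclusion. Crucially, both orbit spaces project canonically to $B\Isom(L)_0$, and the resulting triangle with the equivariant evaluation map commutes strictly.

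Second, precomposing with the rational cohomology equivalence $B(\SO(2)\times\SO(2))\overset{\sim_\mathbb{Q}}{\to}B\Isom(L)_0$ induced by the $m$-fold covering, I would assemble the following commutative triangle of graded $\mathbb{Q}$-algebras:
\[
\begin{tikzcd}[column sep=small]
 & H^\ast(B(\SO(2)\times\SO(2))) \arrow[dl, two heads] \arrow[dr, two heads] & \\
H^\ast(B\Diff_{D^3}(L)_0) & & H^\ast(B\Diff_{\text{pt}}(L)_0). \arrow[ll]
\end{tikzcd}
\]
The two diagonal maps are precisely the surjections of Theorems~\ref{thm: rat cohom of diff(generic lens space) fixed a disc} and \ref{thm: rat cohom of diff(generic lens space) fixed a point}, each sending $e\otimes 1\mapsto \mu$ and $1\otimes e\mapsto \eta$. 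Given any class $c\in H^\ast(B\Diff_{D^3}(L)_0)$, the left diagonal writes $c$ as the image of some polynomial $P$ in $e\otimes 1$ and $1\otimes e$; commutativity then exhibits $c$ as the image of the class of $P$ via the right diagonal followed by the bottom horizontal map, yielding surjectivity.

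No serious obstacle is anticipated, since the argument reduces to bookkeeping in an equivariant diagram over $B\Isom(L)_0$. The one item to verify is that the classes named $\mu,\eta$ in the two theorems arise from the common pullbacks of $e\otimes 1$ and $1\otimes e$ along the same zig-zag via $B\Isom(L)_0$; this is immediate from how they are defined in both constructions.
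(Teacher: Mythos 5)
Your proposal is correct and follows essentially the same route as the paper: both realise the map via the equivariant evaluation $\Emb^+(D^3,L)\to\Emb(\text{pt},L)$ over $B\Isom(L)_0$ and then deduce surjectivity from the commutative triangle under $H^\ast(B\SO(2)\times B\SO(2))$, using that the surjections of Theorems~\ref{thm: rat cohom of diff(generic lens space) fixed a point} and~\ref{thm: rat cohom of diff(generic lens space) fixed a disc} send $e\otimes 1$, $1\otimes e$ to the classes $\mu$, $\eta$ in both targets.
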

\begin{proof}
    There is a commutative diagram as follows:
    \[\begin{tikzcd}[cramped,column sep=tiny]
    	&&& {B\SO(2)\times B\SO(2)} \\
    	{B\Diff_{D^3}(L_2)_0} & {\Emb^+(D^3, L_2)\hq\Diff(L_2)_0} & {\Emb^+(D^3, L_2)\hq\Isom(L_2)_0} & {B\Isom(L_2)_0} \\
    	{B\Diff_{\text{pt}}(L_2)_0} & {\Emb(pt, L_2)\hq\Diff(L_2)_0} & {\Emb(\text{pt}, L_2)\hq\Isom(L_2)_0} & {B\Isom(L_2)_0.}
    	\arrow["{\sim_\mathbb{Q}}", from=1-4, to=2-4]
    	\arrow[from=2-1, to=3-1]
    	\arrow["\simeq", from=2-1, to=2-2]
    	\arrow["\simeq"', from=2-3, to=2-2]
    	\arrow[from=2-2, to=3-2]
    	\arrow[from=2-3, to=2-4]
    	\arrow[from=2-3, to=3-3]
    	\arrow[equal, from=2-4, to=3-4]
    	\arrow["\simeq", from=3-1, to=3-2]
    	\arrow["\simeq"', from=3-3, to=3-2]
    	\arrow[from=3-3, to=3-4]
    \end{tikzcd}\]
    Applying rational cohomology to this we obtain by Theorem \ref{thm: rat cohom of diff(generic lens space) fixed a point} and  Theorem \ref{thm: rat cohom of diff(generic lens space) fixed a disc} the commutativity of the following triangle:
    \[\begin{tikzcd}
	{H^\ast(B\Diff_{D^3}(L)_0)} & {H^\ast(B\SO(2)\times B\SO(2))} \\
	{H^\ast(B\Diff_{\text{pt}}(L)_0).}
	\arrow[two heads, from=1-2, to=1-1]
	\arrow[two heads, from=1-2, to=2-1]
	\arrow[from=2-1, to=1-1]
\end{tikzcd}\]
    This then shows that
    \[ H^\ast(B\Diff_{\text{pt}}(L_2)_0)\to H^\ast(B\Diff_{D^3}(L_2)_0)\]
    is surjective.
\end{proof}

The following lemma will be a core part of our argument for Theorem \ref{thm: the rational cohommology of the main gorups identitiy component}.
\begin{lemma}\label{lem: differential map on group cohomology}
    Let $L$ be a generic lens space and consider the map given by taking the differential at a point $\text{pt}$:
    \[T_{\text{pt}}\colon \Diff_{\text{pt}}(L) \to \GL^+_3(\mathbb{R}).\]

    On rational cohomology this map induces the map
    \[H^\ast(B GL^+_3(\mathbb{R}))\cong \mathbb{Q}[p_1] \to H^\ast(B\Diff_{\text{pt}}(L))\cong\mathbb{Q}[\mu, \eta]/(\mu\eta)\] that sends $p_1$ to $\mu^2+\eta^2$ with the usual notation for the cohomology of $\Diff_{\text{pt}}(L)$, and where we use $GL^+_3(\mathbb{R})\simeq \SO(3)$ and $p_1$ denotes the Pontryagin class.
\end{lemma}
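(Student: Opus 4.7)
The plan is to pin down $T_{\text{pt}}^\ast(p_1) \in H^4(B\Diff_{\text{pt}}(L)_0)$ in two stages: first I would confine it to a one-dimensional subspace, then I would identify the scalar using the circle subgroup $\iota_1$ from Lemma \ref{lem: descending differentials fixing points}. Throughout, I work with $H^\ast(B\Diff_{\text{pt}}(L)_0)$; since $H^\ast(B\Diff_{\text{pt}}(L))$ embeds into it, the claim about $H^\ast(B\Diff_{\text{pt}}(L))$ follows.

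For the first stage, assume without loss of generality that $\text{pt} \in \interior{D^3}$. Every $\varphi \in \Diff_{D^3}(L)$ is the identity on a neighborhood of $\text{pt}$, so $T_{\text{pt}}(\varphi) = \text{id}$; the composition $\Diff_{D^3}(L) \hookrightarrow \Diff_{\text{pt}}(L) \xrightarrow{T_{\text{pt}}} \GL_3^+(\mathbb{R})$ is constant. Hence $BT_{\text{pt}}$ restricts to a null-homotopic map $B\Diff_{D^3}(L)_0 \to B\SO(3)$, so the restriction $H^\ast(B\Diff_{\text{pt}}(L)_0) \to H^\ast(B\Diff_{D^3}(L)_0)$ kills $T_{\text{pt}}^\ast(p_1)$. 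By Corollary \ref{lem: surj on cohom of fiber} together with Theorems \ref{thm: rat cohom of diff(generic lens space) fixed a point} and \ref{thm: rat cohom of diff(generic lens space) fixed a disc}, this restriction is surjective and identifies with the quotient $\mathbb{Q}[\mu,\eta]/(\mu\eta) \twoheadrightarrow \mathbb{Q}[\mu,\eta]/(\mu\eta, \mu^2+\eta^2)$, whose kernel in degree $4$ is the line $\mathbb{Q}\langle\mu^2+\eta^2\rangle$. Therefore $T_{\text{pt}}^\ast(p_1) = c(\mu^2+\eta^2)$ for some $c \in \mathbb{Q}$.

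For the second stage, take $\text{pt} = p(1)$. By Lemma \ref{lem: descending differentials fixing points}, the composition $\SO(2) \xrightarrow{\iota_1} \Diff_{\text{pt}}(L)_0 \xrightarrow{T_{\text{pt}}} \GL_3^+(\mathbb{R})$ is the standard inclusion, so $(BT_{\text{pt}} \circ B\iota_1)^\ast(p_1) = e^2 \in H^4(B\SO(2))$. On the other hand, $\iota_1(e^{2ti}) = F(e^{ti}, e^{ti})$ equals the block-diagonal matrix $\operatorname{diag}(I_2, R(2t))$ in $\SO(4)$, so $\iota_1$ factors as the inclusion of the second factor of the block-diagonal $\SO(2) \times \SO(2) = F(\SO(2) \times \SO(2)) \subset \SO(4)$, and then descends via the $m$-fold cover to $\Isom(L)_0$. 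Tracing the zig-zag of Theorem \ref{thm: rat cohom of diff(generic lens space) fixed a point} through this factorization, I find $(B\iota_1)^\ast(\mu) = e$ and $(B\iota_1)^\ast(\eta) = 0$. Therefore $ce^2 = (B\iota_1)^\ast(c(\mu^2+\eta^2)) = e^2$, so $c = 1$ and $T_{\text{pt}}^\ast(p_1) = \mu^2 + \eta^2$.

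The main obstacle is the naturality bookkeeping in the second stage: one must match the classes $\mu$ and $\eta$ arising from the zig-zag of Theorem \ref{thm: rat cohom of diff(generic lens space) fixed a point} with the Euler classes of the two factors of the block-diagonal $\SO(2) \times \SO(2) \subset \SO(4)$, and check that in these coordinates $\iota_1$ is a single-factor inclusion (rather than the diagonal, which would only force $c = 1/2$ and miss the correct scalar). Once that identification is made correctly, the scalar $c$ is forced by the single computation with $\iota_1$.
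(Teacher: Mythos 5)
Your proposal is correct, and it takes a genuinely different route from the paper at the key step. Both arguments share the same "easy half": via Lemma \ref{lem: descending differentials fixing points} and the zig-zag of Theorem \ref{thm: rat cohom of diff(generic lens space) fixed a point}/Lemma \ref{lem: preliminary s.seq. comparison}, the circle $\pi^\ast\circ\iota_1$ lands in the block torus $F(S^1\times S^1)$, pulls back exactly one of $\mu,\eta$ to $e$ and the other to $0$, and pulls $p_1$ back to $e^2$; this alone determines only one of the two coefficients in $T_{\text{pt}}^\ast(p_1)=a\mu^2+b\eta^2$. The paper pins down the second coefficient by running the same computation at the other fixed point with $\iota_{1j}$ and then identifying $T_1$ with $T_{1j}$ up to conjugation by a diffeomorphism $\psi$ with $\psi(1)=1j$ (using a chart normalization and the triviality of inner automorphisms on $H^\ast(B-)$). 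You instead observe that $T_{\text{pt}}$ is the trivial homomorphism on $\Diff_{D^3}(L)$ (the point being the center of the disc), so $T_{\text{pt}}^\ast(p_1)$ dies under the restriction $H^4(B\Diff_{\text{pt}}(L)_0)\to H^4(B\Diff_{D^3}(L)_0)$, which by Corollary \ref{lem: surj on cohom of fiber} together with Theorems \ref{thm: rat cohom of diff(generic lens space) fixed a point} and \ref{thm: rat cohom of diff(generic lens space) fixed a disc} is the quotient $\mathbb{Q}[\mu,\eta]/(\mu\eta)\to\mathbb{Q}[\mu,\eta]/(\mu\eta,\mu^2+\eta^2)$ with one-dimensional degree-$4$ kernel $\mathbb{Q}\langle\mu^2+\eta^2\rangle$; a single circle then fixes the scalar. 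This is clean, avoids the conjugation/chart argument entirely, and is not circular, since Corollary \ref{lem: surj on cohom of fiber} and both theorems it relies on are established before this lemma and independently of it. Two cosmetic points: your labels $(B\iota_1)^\ast(\mu)=e$, $(B\iota_1)^\ast(\eta)=0$ are swapped relative to the paper's convention, which is immaterial since $\mu^2+\eta^2$ is symmetric; and you should say explicitly that the disc of stage one can be taken centered at $p(1)$ (or invoke the same conjugation-invariance the paper uses) so that the two choices of $\text{pt}$ agree — a minor point, at the same level of care as the paper itself.
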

\begin{proof}
    Let us use the same notations $\iota_{1}$ and $\iota_{1j}$ as in Lemma \ref{lem: descending differentials fixing points}.
    When thinking of $S^3$ as the unit sphere in $\mathbb{C}^2$, the image of $\iota_1$ consists of all the rotations of the first coordinate leaving the second coordinate fixed, the image of $\iota_{1j}$ consists of all the rotations of the second coordinate leaving the first coordinate fixed.
    
    This means that these maps factor through the quotient $\pi\colon S^3\to L$, meaning that we can get dashed maps, where $\pi^\ast\colon \Norm_{\Isom^+_{x}(S^3)}(C_m)_0\to\Diff_{\pi(x)}(L)_0$ denotes the map given by postcomposition with $\pi$.
    \begin{equation}\label{eq: iota pi business}
    \begin{tikzcd}[cramped]
    	{\{x\}\hq \SO(2)} && {\{\pi(x)\}\hq\Diff_{\{\pi(x)\}}(L)_0} \\
    	{S^3\hq(\SO(2)\times\SO(2))} & {L\hq\Isom(L)_0} & {L\hq\Diff(L)_0}
    	\arrow["B(\pi^\ast\circ\iota_{x})", dashed, from=1-1, to=1-3]
    	\arrow[from=1-1, to=2-1]
    	\arrow["\simeq"', from=1-3, to=2-3]
    	\arrow["\sim_{\mathbb{Q}}", from=2-1, to=2-2]
    	\arrow["\simeq", from=2-2, to=2-3]
    \end{tikzcd}
    \end{equation}
    Where $\pi\colon S^3 \to L$ is the quotient map, $x$ is either $1j$ or $1$, and in the diagram the left vertical map is induced by the inclusion
    \[\begin{tikzcd}[cramped, row sep=small, column sep=large]
    	{\SO(2)} & \SO(2)\times\SO(2) \\
    	\{x\} \arrow[loop above, out=120, in=70, distance=15] & S^3. \arrow[loop above, out=120, in=70, distance=15]
    	\arrow[hook, from=1-1, to=1-2, "\iota_{x}"]
    	\arrow[hook, from=2-1, to=2-2]
    \end{tikzcd}\]
    Let us investigate what this diagram looks like after taking rational cohomology.
    First, we must consider what this left vertical map induces in rational cohomology and for that we can use the commutative triangle
    \[\begin{tikzcd}[cramped]
    	{B\SO(2)} & {S^3\hq(\SO(2)\times \SO(2))} \\
    	& {B(\SO(2)\times\SO(2)).}
    	\arrow[from=1-1, to=1-2]
    	\arrow["{B\iota_x}"', from=1-1, to=2-2]
    	\arrow[from=1-2, to=2-2]
    \end{tikzcd}\]
    The vertical map in this triangle by Lemma \ref{lem: preliminary s.seq. comparison} on cohomology induces the quotient map $\mathbb{Q}[e\otimes 1, 1\otimes e] \to \mathbb{Q}[e\otimes 1, 1\otimes e]/((e\otimes 1)(1\otimes e))$.
    
    Furthermore since $\iota_\text{1}$ is a section of $\text{pr}_2$ but $\text{pr}_1\circ\iota_1$ is constant and $\iota_{1j}$ is a section of $\text{pr}_1$ but $\text{pr}_2\circ\iota_{1j}$ is constant, we have that $B\iota_1(e\otimes 1) = 0$ and $B\iota_1(1\otimes e) = e$ and 
    $B\iota_{1j}(e\otimes 1) = 0$ and $B\iota_{1j}(1\otimes e) = e$.
    
    Now we can conclude that applying cohomology to (\ref{eq: iota pi business}) we see that $\mu$ and $\eta$ are defined to be the preimages of $e\otimes 1$ and $1\otimes e$ respectively through the function that is induced by $S^3\hq (\SO(2)\times \SO(2))\to L\hq \Isom(L)_0$ on cohomology.    
    Now we have described all the maps except the dashed map in (\ref{eq: iota pi business}) but commutativity allows us to conclude that $B(\pi^\ast\circ\iota_1)$ sends $\mu$ to $0$ and $\eta$ to $e$,  and $B(\pi^\ast\circ\iota_{1j})$ sends $\mu$ to $e$ and $\eta$ to $0$.

    Furthermore as we have seen in Lemma \ref{lem: descending differentials fixing points} the following composition is still the same inclusion of $\SO(2)$:
    \[\SO(2)\overset{\iota_{x}}{\to} \Norm(C_m)_0\overset{\pi^\ast}{\to}\Diff_{\{\pi(x)\}}(L)_0\overset{T_{x}}{\to}\GL^+_3(\mathbb{R}).\]
    This means that 
    \[B(T_x\circ\pi^\ast\circ\iota_x)\colon B\SO(2)\to B\GL^+_3(\mathbb{R})\]
    on cohomology induces the map that sends $p_1$ to $e^2$ by the theory of characteristic classes ($e^2 = p_1$ in $H^\ast(B\SO(2))\cong \mathbb{Q}[e]$).
    
    Now we are almost ready to conclude, but first we have to relate the two maps $T_{1}$ and $T_{1j}$.
    The subgroups $\Diff_{\{1\}}(L)_0, \Diff_{\{1j\}}(L)_0 < \Diff(L)_0$ are conjugate to each other and we wish to exploit this fact.
    Let us fix a diffeomorphism $\psi\in \Diff(L)_0$ that is so that $\psi(1) = 1j$, note that existence of such $\psi$ follows from Lemma \ref{local retractileness} (3) and (4) and the fact that $L\cong\Emb(pt, L)$ is $\Diff(L)$-locally retractile.
    Conjugating some $\varphi\in \Diff_{\{1\}}(L)_0$ with $\psi$ we get $\psi\circ\varphi\circ\psi^{-1}\in \Diff_{\{1j\}}(L)_0$, let us denote $c_\psi$ the map sending $\varphi$ to $\psi\circ\varphi\circ\psi^{-1}$.
    When we think of $T_{x}$ as taking values in $\GL_3^+(\mathbb{R})$, we are identifying $T_{x}L$ with $\mathbb{R}^3$ via a chart, let us denote this chart by $\sigma_{x}$.
    We may assume that on a small neighborhood of $0$ the diffeomorphism $\sigma_{1j}\circ\psi\circ\sigma_{1}$ is the identity, this means that $T_1 = c_{\psi}\circ T_{1j}$.
    It is a general fact that an inner homomorphism of $G$ induces on $BG$ a map homotopic to the identity (however in general not based homotopic), see for example \cite[Chapter II Theorem 1.9]{adem13} but it also follows in our case directly from $\Diff(L)_0$ being path connected.
    The inclusion $\Diff_{x}(L)_0\hookrightarrow\Diff(L_0)$ induces furthermore a surjection on rational cohomology, $\mathbb{Q}[\mu, \eta] \to \mathbb{Q}[\mu, \eta]/(\mu\eta)$, and this means that $(Bc_{\psi})^\ast\colon H^\ast(B\Diff_{1j}(L)_0)\to H^\ast(B\Diff_{1}(L)_0)$ is the identity.
    We will identify these cohomology groups via this comparison.

    To conclude consider that 
    \[B(T_1\circ\pi^\ast\circ\iota_1) = B(T_{1j}\circ\pi^\ast\circ\iota_{1j}).\]
    These send $p_1$ to $e^2$.
    Furthermore $B(\pi^\ast\circ\iota_1)$ sends $\mu$ to $0$ and $\eta$ to $e$,  and $B(\pi^\ast\circ\iota_{1j})$ sends $\mu$ to $e$ and $\eta$ to $0$.
    This means that necessarily $T_1(p_1) = \eta^2 + a\mu^2$ and $T_{1j}(p_1) = b\eta^2 + \mu^2$, where $a, b\in\mathbb{Q}$ (we don't care about $\eta\mu$ because that is zero in $H^\ast(B\Diff_{x}(L)_0)$).
    By our identification of $H^\ast(B\Diff_{\{1\}}(L)_0)$ with $H^\ast(B\Diff_{\{1j\}}(L)_0)$ we have $\eta^2 + a\mu^2 = b\eta^2 + \mu^2$ and we conclude that $a = b = 1$.
\end{proof}

\begin{theorem}\label{thm: the rational cohommology of the main gorups identitiy component}
    Let $M\cong L_1\#L_2$ for two non-diffeomorphic generic lens spaces $L_1$ and $L_2$, fix a 3-disc in $L_1$ and $L_2$ to denote the discs that are cut out when connected summing, and $S^2$ in $M$ the sphere we join $L_1\setminus\interior{D^3}$ and $L_2\setminus\interior{D^3}$ along.
    Denote the rational cohomology groups     
    \[H^\ast(B\Diff(L_1\setminus\interior{D^3})_0) \cong \mathbb{Q}[\mu, \eta]/(\mu\eta)
    \;and\;
    H^\ast(B\Diff(L_2\setminus\interior{D^3})_0) \cong \mathbb{Q}[\nu, \vartheta]/(\nu\vartheta).\]    
    The map induced by the product of the restrictions
    \[H^\ast(B\Diff(L_2\setminus\interior{D^3})_0 \times B\Diff(L_1\setminus\interior{D^3})_0)\to H^\ast(B\Diff(M, S^2)_0)\]
    is surjective, and through it we obtain
    \[H^\ast(B\Diff(M, S^2)_0)\cong\mathbb{Q}[\mu, \eta,\nu, \vartheta]/(\mu\eta, \nu\vartheta, \mu^2+\eta^2 - \nu^2-\eta^2).\]
\end{theorem}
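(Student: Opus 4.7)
The plan is to (i) prove surjectivity of the stated map by a spectral sequence comparison, (ii) exhibit three natural classes in the kernel, and (iii) conclude by matching dimensions with Theorem \ref{thm: main result}.

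For surjectivity, I compare the cohomological Leray--Serre spectral sequences of the two rows of Figure \ref{fig:main ho fib seq comp}. Both have base $B\Diff(L_1\setminus\interior{D^3})_0$, on which the comparison induces the identity. By Remark \ref{rem: handy commutative diagram}, the map on fibers is identified with the inclusion $B\Diff_{D^3}(L_2)_0 \hookrightarrow B\Diff_{\text{pt}}(L_2)_0$, which is surjective on rational cohomology by Corollary \ref{lem: surj on cohom of fiber}. The source spectral sequence collapses trivially (product fibration), and the target collapses on $E_2$ by the parity argument used in the additive computation, so surjectivity on $E_2$ passes to total cohomology.

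Next, $\mu\eta$ and $\nu\vartheta$ vanish already in $H^*(B\Diff(L_i\setminus\interior{D^3})_0)$, hence pull back to zero. For the relation $\mu^2+\eta^2-\nu^2-\vartheta^2 = 0$, I use the restriction to $S^2$: the composition
\[
\Diff(M, S^2) \to \Diff(L_i\setminus\interior{D^3}) \to \Diff(S^2) \simeq \Or(3)
\]
equals the direct restriction $\Diff(M, S^2) \to \Diff(S^2)$ independently of $i = 1, 2$, so the pullback of the Pontryagin class $p_1 \in H^4(B\Or(3))$ yields one class in $H^4(B\Diff(M, S^2)_0)$, reached via either factor. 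Through the zigzag of Lemma \ref{lem: cut out disc}, the boundary restriction $\Diff(L\setminus\interior{D^3}) \to \Or(3)$ corresponds via $\Diff^{\Or}(L, D)$ to the differential map $T_{\text{pt}}\colon \Diff_{\text{pt}}(L) \to \Or(3)$: on $\phi \in \Diff^{\Or}(L, D)$ with $\phi|_D = A \in \Or(3) \subset \Diff(D^3)$, both the boundary restriction to $\partial D = S^2$ and the differential at the origin equal $A$. By Lemma \ref{lem: differential map on group cohomology}, the pullback of $p_1$ thus equals $\mu^2+\eta^2$ on the $L_1$-side and $\nu^2+\vartheta^2$ on the $L_2$-side, so they coincide in $H^*(B\Diff(M, S^2)_0)$.

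Finally, let $R = \mathbb{Q}[\mu, \eta, \nu, \vartheta]/(\mu\eta, \nu\vartheta, \mu^2+\eta^2-\nu^2-\vartheta^2)$; the foregoing produces a surjection $R \twoheadrightarrow H^*(B\Diff(M, S^2)_0)$, so it suffices to check that $\dim R_{2k}$ matches Theorem \ref{thm: main result}. Since $\mathbb{Q}[\mu,\eta,\nu,\vartheta]/(\mu\eta, \nu\vartheta)$ has dimension $4k$ in degree $2k$ for $k \geq 1$, and since multiplication by $\mu^2+\eta^2-\nu^2-\vartheta^2$ is injective from degree $2(k-2)$ for $k \geq 3$ (by a leading pure-variable monomial inspection: each basis element of the source produces a distinct higher pure power like $\mu^k, \eta^k, \nu^k, \vartheta^k$ or a distinct mixed term like $\mu^{a+2}\nu^c - \mu^a\nu^{c+2}$), we obtain $\dim R_{2k} = 1, 4, 7, 8, 8, \ldots$ for $k = 0, 1, 2, 3, 4, \ldots$, exactly matching Theorem \ref{thm: main result}. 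The main obstacle I anticipate is this identification of the two pullbacks of $p_1$ with $\mu^2+\eta^2$ and $\nu^2+\vartheta^2$: carefully unwinding the zigzag of Lemma \ref{lem: cut out disc} to compatibly replace the boundary restriction by the differential $T_{\text{pt}}$ of Lemma \ref{lem: differential map on group cohomology} is the subtle point on which the whole computation pivots.
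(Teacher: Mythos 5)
Your proposal is correct and takes essentially the same route as the paper: surjectivity via the comparison of Leray--Serre spectral sequences for the two rows of Figure \ref{fig:main ho fib seq comp} using Corollary \ref{lem: surj on cohom of fiber}, the kernel class $\mu^2+\eta^2-\nu^2-\vartheta^2$ produced by restricting to $S^2$ and identifying the boundary restriction with the differential map $T_{\text{pt}}$ so that Lemma \ref{lem: differential map on group cohomology} applies, and finally a dimension comparison against Theorem \ref{thm: main result}. The only (harmless) difference is in that last count: you establish the Hilbert function by showing $\mu^2+\eta^2-\nu^2-\vartheta^2$ is a non-zero-divisor in $\mathbb{Q}[\mu,\eta,\nu,\vartheta]/(\mu\eta,\nu\vartheta)$, whereas the paper compares leading-term (Gr\"obner basis) ideals of the two ideals involved.
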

\begin{proof}
    Lemma \ref{lem: surj on cohom of fiber} implies that the comparison of the fibers in Figure \ref{fig:main ho fib seq comp} induces a surjection on rational cohomology.
    As the Leray-Serre spectral sequences induced by both of the rows in Figure \ref{fig:main ho fib seq comp} collapse on the $E_2$-page, the induced map on the total spaces
    \[f\colon H^\ast(B\Diff(L_2\setminus\interior{D^3})_0 \times B\Diff(L_1\setminus\interior{D^3})_0)\to H^\ast(B\Diff(M, S^2)_0).\]
    is surjective by naturality properties of the spectral sequences.
    
    This means that in order to figure out the cup product structure on $H^\ast(B\Diff_{D^3}(M)_0)$, we need to describe the kernel of this map.
    To compute this kernel we consider the square
   \[\begin{tikzcd}
    	{\Diff(M, S^2)_0} & {\Diff(L_1\setminus\interior{D^3})_0\times\Diff(L_2\setminus\interior{D^3})_0} \\
    	{\Diff(S^2)_0} & {\Diff(S^2)_0\times\Diff(S^2)_0.}
    	\arrow[from=1-1, to=1-2]
    	\arrow["{\text{res}^M_{S^2}}", from=1-1, to=2-1]
    	\arrow[from=1-2, to=2-2]
    	\arrow["\Delta", from=2-1, to=2-2]
    \end{tikzcd}\]
    This induces the maps on cohomology, where we will be interested in computing $g_1$ and $g_2$:
    \[\begin{tikzcd}
    	{H^\ast(B\Diff(M, S^2)_0)} & {H^\ast(B\Diff(L_1\setminus\interior{D^3})_0)\otimes_\mathbb{Q}H^\ast(B\Diff(L_2\setminus\interior{D^3})_0)} \\
    	{H^\ast(B\Diff(S^2)_0)} & {H^\ast(B\Diff(S^2)_0)\otimes_\mathbb{Q}H^\ast(B\Diff(S^2)_0).}
    	\arrow["f"', two heads, from=1-2, to=1-1]
    	\arrow[from=2-1, to=1-1]
    	\arrow["{g_1\otimes g_2}", from=2-2, to=1-2]
    	\arrow["\smile"', from=2-2, to=2-1]
    \end{tikzcd}\]
    Note that this diagram shows $f\circ (g_1\otimes g_2) = (\text{res}^M_{S^2})^\ast\circ\smile$.
    This means that since $f\circ (g_1\otimes g_2) = (\text{res}^M_{S^2})^\ast\circ\smile$,
    \[f(\text{pr}_1^\ast(g_1(p_1))\smile\text{pr}_2^\ast(g_2(1))) = (\text{res}^M_{S^2})^\ast(p_1)=
    f(\text{pr}_1^\ast(g_1(1))\smile\text{pr}_2^\ast(g_2(p_1))).\]
    And therefore $(g_1\otimes g_2)(p_1\otimes 1) - (g_1\otimes g_2)(1\otimes p_1)\in \ker(f)$.
    Since $g_1$ and $g_2$ are symmetric we will continue with the notation $g\colon H^\ast(B\Diff(S^2)_0)\to H^\ast(B\Diff(L\setminus \interior{D^3})_0)$.
    To understand this map we use the diffeomorphism group \[\Diff^{\text{SO}}(L, D^3) = \{\varphi\in \Diff(L, D^3)\,|\, \left.\varphi\right|_{D^3}\in \SO(3)\subseteq \Diff(D^3)\}\]
    consisting of those diffeomorphisms that rotate the 3-disc that is fixed set-wise.
    $\Diff^{\SO}(L, D^3)\simeq \Diff(L, D^3)$, as $\SO(3)\simeq \Diff(D^3)$.
    This fits into a diagram of the following form:
    \[\begin{tikzcd}
    	{\Diff(L\setminus \interior{D^3})_0} & {\Diff^{\text{SO}}(L, D^3)_0} & {\Diff_{\text{pt}}(L)_0} \\
    	{\Diff(S^2)_0} & {\SO(3)} & {\GL^+_3(\mathbb{R}).}
    	\arrow[from=1-1, to=2-1]
    	\arrow["\simeq"', from=1-2, to=1-1]
    	\arrow["\simeq", from=1-2, to=1-3]
    	\arrow[from=1-2, to=2-2]
    	\arrow["{T_{\text{pt}}}"', from=1-3, to=2-3]
    	\arrow["\simeq"', from=2-2, to=2-1]
    	\arrow["\simeq", from=2-2, to=2-3]
    \end{tikzcd}\]
    Here the top left horizontal map is an equivalence, because it is a composite of $\Diff^{\text{SO}}(L, D^3)_0\simeq \Diff(L, D^3)_0\simeq\Diff(L\setminus \interior{D^3})_0$.
    In Lemma \ref{lem: differential map on group cohomology} we have computed what the map induced on group cohomolgy by the map taking differentials at $\text{pt}$, the mid-point of the $D^3$ we cut out to get $L\setminus \interior{D^3}$.
    It sends $p_1$ to $\mu^2+\eta^2$ (for $L= L_1$).
    
    So getting back to computing the kernel of $f$, we can now see that
    $(\mu^2+\eta^2-\nu^2-\vartheta^2)\in \ker(f)$.
    We know the dimensions of $H^k(B\Diff(M, S^2)_0)$ for all $k$, and comparing dimensions we can see that this must be the whole kernel.

    Let us give a short argument for why the dimensions should agree.
    The dimensions in Theorem \ref{thm: main result} come from a spectral sequence that collapse on the $E_2$-page, with fiber $B\Diff_{L_2\setminus\interior{D^3}}(M)_0$ and base $B\Diff(L_2\setminus\interior{D^3})_0$.
    This means that the dimension of $H^k(B\Diff(M, L_2\setminus\interior{D^3}))_0$ is the same as the dimension of $H^k(B\Diff_{L_2\setminus\interior{D^3}}(M)_0\times B\Diff(L_2\setminus\interior{D^3})_0)$ as a $\mathbb{Q}$ vector space.
    So we wish to see that the dimension in each degree is the same for the graded $\mathbb{Q}$ vector spaces $\mathbb{Q}[\mu, \eta, \nu, \vartheta]/(\mu\eta, \nu\vartheta, \mu^2+\eta^2)$ and $\mathbb{Q}[\mu, \eta, \nu, \vartheta]/(\mu\eta, \nu\vartheta, \mu^2+\eta^2-\nu^2-\vartheta^2)$.
    Let us fix the lexicographic monomial order with $\mu> \eta> \nu> \vartheta$, and find the Gröbner basis with respect to this monomial order, now this shows that the leading term ideal of both of the ideals $I_1 = (\mu\eta, \nu\vartheta, \mu^2+\eta^2)$ and $I_2 = (\mu\eta, \nu\vartheta, \mu^2+\eta^2-\nu^2-\vartheta^2)$ is $\text{LT}(I_1) = \text{LT}(I_2) = (\mu\eta, \nu\vartheta, \eta^3, \mu^2)$.
    It is a fact of algebra, see e.g. \cite{CLO1}[Proposition 4, Chapter 5, Section 3] that as $\mathbb{Q}$ vector spaces $\mathbb{Q}[\mu, \eta, \nu, \vartheta]/I = \text{Span}(x^\alpha | x^\alpha\not \in \text{LT}(I))$ for any ideal $I\subseteq \mathbb{Q}[\mu, \eta, \nu, \vartheta]$.
\end{proof}

\section{The whole diffeomorphism group}
In our section on strategy we stated that $H^\ast(G)\cong H^\ast(G_0)^{\pi_0 G}$ but we are yet to describe the action in detail.
It is obtained as follows: take some element $[g]\in \pi_0 G$, and conjugation by this element induces a self map, $c_g$ of $H^\ast(BG_0)$, note that this construction is natural in the sense that given a group homomorphism $\varphi$, $H^\ast(B\varphi)\circ c_{\varphi(g)} = c_g\circ H^\ast(B\varphi)$ for all $g$ in the domain of $\varphi$.

In the following statement we use the notation form Theorem \ref{thm: the rational cohommology of the main gorups identitiy component}.
\begin{proposition}
    The action of $\pi_0\Diff(M)\cong C_2\times C_2$ on 
    \[H^\ast(B\Diff(M)_0)\cong \mathbb{Q}[\mu, \eta,\nu, \vartheta]/(\mu\eta, \nu\vartheta, \mu^2+\eta^2 - \nu^2-\eta^2)\]
    is generated by $c_{(-1, 1)}\colon \mu\mapsto -\mu$, $\eta\mapsto -\eta$ (leaving the other generators fixed), and $c_{(1, -1)}\colon\nu\mapsto -\nu$, $\vartheta \mapsto -\vartheta$.
\end{proposition}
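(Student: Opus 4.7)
The plan is to pick explicit representatives for the two generators of $\pi_0\Diff(M) \cong C_2 \times C_2$, use naturality of the conjugation action with respect to the restriction maps to each side to decouple the action on the subalgebras generated by $\mu,\eta$ and by $\nu,\vartheta$, and then compute the action of the nontrivial component of $\pi_0\Diff(L)$ on $H^\ast(B\Diff(L)_0)$ via the cover $\SO(2)\times\SO(2) \to \Isom(L)_0$.

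Using the splitting exhibited in the proof of Theorem \ref{thm: mapping class group}, I would pick $\sigma_1 \in \Diff(M, S^2)$ representing $(-1,1)$ to be the identity on $L_2 \setminus \interior{D^3}$ with its restriction to $L_1 \setminus \interior{D^3}$ representing the nontrivial class in $\pi_0\Diff(L_1 \setminus \interior{D^3}) \cong C_2$; such a $\sigma_1$ exists by Lemma \ref{lem: extendability based on boundary} since any such restriction is orientation preserving on $S^2$. A representative $\sigma_2$ for $(1,-1)$ is chosen symmetrically. The restriction homomorphism $R\colon \Diff(M, S^2) \to \Diff(L_1 \setminus \interior{D^3}) \times \Diff(L_2 \setminus \interior{D^3})$ satisfies $R \circ c_{\sigma_i} = c_{R(\sigma_i)} \circ R$, so the induced map on classifying spaces of identity components is equivariant for conjugation, and by Theorem \ref{thm: the rational cohommology of the main gorups identitiy component} it is surjective on rational cohomology. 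Since $R(\sigma_1) = (\sigma_1|_{L_1\setminus\interior{D^3}}, \text{id})$, the action of $c_{\sigma_1}$ on $H^\ast(B\Diff(M, S^2)_0)$ fixes $\nu$ and $\vartheta$, while on $\mu, \eta$ it agrees with the action of the nontrivial element of $\pi_0\Diff(L_1 \setminus \interior{D^3})$ on $H^\ast(B\Diff(L_1 \setminus \interior{D^3})_0) \cong \mathbb{Q}[\mu,\eta]/(\mu\eta)$. The analogous statement holds for $\sigma_2$, reducing the proposition to computing the action of the nontrivial component $\tau \in \pi_0\Diff(L)$ on $H^\ast(B\Diff(L)_0)$.

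By Theorem \ref{thm: generalized smale conj} and the description $\Isom(L) \cong \Dih(S^1\tilde{\times}S^1)/\langle F(\xi_{2m}^{q+1}, \xi_{2m}^{q-1})\rangle$, the class $\tau$ is represented by the image of $F(j,j)$. Conjugation by $F(j,j)$ preserves $F(S^1\times S^1)$ and lifts along the $m$-fold cover $\SO(2)\times\SO(2) \to \Isom(L)_0$ to the involution $(z_1, z_2) \mapsto (\overline{z_1}, \overline{z_2})$, using the quaternion identity $jzj^{-1} = \overline{z}$ for $z\in S^1$. Since inversion on $\SO(2)$ negates the Euler class in $H^2(B\SO(2))$, this involution acts on $H^\ast(B(\SO(2)\times\SO(2)))$ by $e\otimes 1 \mapsto -(e\otimes 1)$ and $1\otimes e \mapsto -(1\otimes e)$. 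By naturality of the zig-zag (\ref{eq: emb of a point comparison}), through which $\mu$ and $\eta$ are defined as preimages of these Euler classes in Theorem \ref{thm: rat cohom of diff(generic lens space) fixed a point}, this forces $c_\tau(\mu) = -\mu$ and $c_\tau(\eta) = -\eta$, as required.

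The main technical obstacle is to verify that the entire zig-zag used to define $\mu$ and $\eta$ admits a compatible $F(j,j)$-action. This reduces to checking that $F(j,j)$ normalizes both the cyclic subgroup $\langle F(\xi_{2m}^{q+1}, \xi_{2m}^{q-1})\rangle$ of $\SO(4)$ and the subgroup $F(S^1\times S^1)$, which both follow from the fact that conjugation by $F(j,j)$ acts on $F(S^1\times S^1)$ by coordinate-wise complex conjugation.
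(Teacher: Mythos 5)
Your proposal is correct and follows essentially the same route as the paper: reduce the computation via the conjugation-equivariant surjection induced by the restrictions to the two summands, identify the nontrivial component on each side with (the image of) $F(j,j)$, and observe that conjugation by $F(j,j)$ acts on the $\SO(2)\times\SO(2)$ cover by coordinate-wise complex conjugation, hence negates both Euler classes. The paper's proof differs only in presentation, phrasing the reduction as a chain of surjections down to the action of $\pi_0\Isom(L)$ on $H^\ast(B\Isom(L)_0)$ rather than choosing explicit representatives $\sigma_1,\sigma_2$.
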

\begin{proof}
From Theorem \ref{thm: mapping class group} we have the description $\pi_0 \Diff(M, S^2) \cong \pi_0 \Diff(L_1\setminus\interior{D^3}) \times \pi_0 \Diff(L_2\setminus\interior{D^3})$.
Since the product of the restrictions $\Diff(M, S^2)_0 \to \Diff(L_1\setminus\interior{D^3})_0\times \Diff(L_2\setminus\interior{D^3})_0$ induce a surjection on cohomology, using symmetry, we can figure out this action by investigating the action of $\pi_0\Diff(L\setminus \interior{D^3})$ on $H^\ast(\Diff(L\setminus \interior{D^3})_0)$.
But this through weak equivalences reduces to the action of $\pi_0\Diff_{\text{pt}}(L)$ on $H^\ast(\Diff_{\text{pt}}(L)_0)$.
Now from the calculation of the cohomology of $B\Diff_{\text{pt}}(L)_0$ we see that in fact through another surjection it is enough to figure out the action of $\pi_0\Isom(L)$ on $H^\ast(B\Isom(L)_0)$.

Without loss of generality we fix $\text{pt}\in L$ to be represented by $(1+0j)\in S^3$.
From the calculation of the isometries the element $f(j, j)$ represents the non-trivial element of $\pi_0\Isom(L)\cong C_2$.
Computation shows $F(j, j)\circ F(w_1, w_2) \circ F(j, j)^{-1} = F(\overline{w_1}, \overline{w_2})$.
This means that the action is generated by $\Isom(L)\cong S^1\times S^1 \to S^1\times S^1$, $(w_1, w_2)\mapsto (\overline{w_1}, \overline{w_2})$.

The conjugation map $S^1\to S^1$ on cohomology that sends the fundamental class $e$ to $-e$ by naturality of spectral sequences it also sends $e$ to $-e$ in the cohomology of $B S^1\cong \mathbb{CP}^\infty$.
\end{proof}
\begin{lemma}\label{lem: fixed points of a quotient}
    Let $G$ be group acting on a ring $R$ such that the order of $G$ is invertible in $R$.
    If $I\subseteq R$ is an ideal such that $I\subseteq G.I$, then 
    \[(R/I)^G\cong R^G/I^G\]
    where $I^G=R^G\cap I$ is an ideal of the subring $R^G\subseteq R$.
\end{lemma}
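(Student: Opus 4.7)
The plan is the standard Reynolds operator (averaging) argument. First I read the hypothesis ``$I \subseteq G.I$'' as saying that $I$ is $G$-stable, i.e.\ $g.I = I$ for all $g \in G$; this is precisely what is needed to make $R/I$ into a $G$-ring so that $(R/I)^G$ makes sense as a subring. Since $I^G = R^G \cap I$ is an ideal of $R^G$, the inclusion $R^G \hookrightarrow R$ followed by the quotient $R \twoheadrightarrow R/I$ kills $I^G$ and therefore factors through a ring homomorphism $\phi\colon R^G/I^G \to (R/I)^G$. The goal is to show $\phi$ is a bijection.

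The workhorse is the averaging map $\pi\colon R \to R^G$ defined by $\pi(r) = |G|^{-1}\sum_{g \in G} g.r$, which makes sense because $|G|$ is invertible in $R$. This $\pi$ is $R^G$-linear, restricts to the identity on $R^G$, and, using $G$-stability of $I$, satisfies $\pi(I) \subseteq R^G \cap I = I^G$.

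With $\pi$ in hand, injectivity and surjectivity of $\phi$ are immediate. For injectivity, if $r \in R^G$ satisfies $r \in I$, then by definition $r \in R^G \cap I = I^G$, so $[r] = 0$ in $R^G/I^G$. For surjectivity, take $[x] \in (R/I)^G$ with representative $x \in R$; the hypothesis says $g.x - x \in I$ for every $g \in G$, hence $\pi(x) - x = |G|^{-1}\sum_{g \in G}(g.x - x) \in I$. Therefore $\pi(x) \in R^G$ represents the same class as $x$ in $R/I$, and $\phi(\pi(x) + I^G) = [x]$.

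I do not expect any real obstacle here: the argument is completely formal once one has the Reynolds operator, which is available because $|G|$ is a unit in $R$. The only subtle point is the reading of the hypothesis on $I$, which must be that $I$ is $G$-invariant so that both $R/I$ is a $G$-ring and $\pi$ sends $I$ into itself.
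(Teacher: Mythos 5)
Your proof is correct and follows essentially the same route as the paper: both identify the map $R^G/I^G \to R/I$ as injective with image in $(R/I)^G$ (since the kernel of $R^G \to R/I$ is exactly $R^G\cap I = I^G$), and both prove surjectivity by averaging a representative over $G$, using invertibility of $|G|$; you merely package the averaging as a Reynolds operator and, like the paper, read the hypothesis on $I$ as $G$-invariance.
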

\begin{proof}
    Since $I\subseteq G.I$ and therefore the $G$ action on $R$ induces a $G$ action on $R/I$.
    Let us denote $I^G =I\cap R^G \subseteq R^G$, this is an ideal, we will see that $(R/I)^G \cong R^G/I^G$.
    The map $R^G/I^G\to R/I$, $[r]_{I^G}\mapsto [r]_{I}$ is well-defined, and injective since for $r, r^\prime \in R^G$, $[r]_{I^G} = [r^\prime]_{I^G}$ if and only if $(r-r^\prime)\subseteq I^G = R^G\cap I$, but $(r-r^\prime)\in R^G$ comes for free so this is the case if and only if $[r]_I = [r^\prime]_I$.
    The map $[r]_{I^G}\mapsto [r]_{I}$ lands in $(R/I)^G\subseteq R/I$, we will show that it hits all the elements of $(R/I)^G$.
    Consider some $[r]\in (R/I)$, we wish to see that if for all $g\in G$, $[g.r] = [r]$, then there is some $r^\prime \in R^G$, so that $[r] = [r^\prime]$.
    Set
    \[r^\prime = \frac{1}{|G|}\sum_{g\in G}g.r\]
    this is so that $[r^\prime] = \frac{1}{|G|}|G|\cdot [r] = [r]$.
    Clearly multiplication by any element of $G$ gives an isomorphism of $G$ thus we also have $r^\prime\in R^G$.
\end{proof}
From the theorem of Hatcher, Theorem \ref{theorem of Hatcher}, $H^\ast(B\Diff(M, S))\cong H^\ast(B\Diff(M))$ in the case where $M$ is the connected sum of two generic lens spaces.
In this light we state our main theorem with notations for $H^\ast(B\Diff(L_1\setminus\interior{D^3})_0)$ and $H^\ast(B\Diff(L_2\setminus\interior{D^3})_0)$ consistent with those in Theorem \ref{thm: the rational cohommology of the main gorups identitiy component}.
\begin{theorem}\label{main result}
    Let $M\cong L_1\#L_2$ for two non-diffeomorphic generic lens spaces $L_1$ and $L_2$, fix $D^3$ in $L_1$ and $L_2$ to denote the discs that are cut out when connected summing, and $S^2$ in $M$ the sphere we join $L_1\setminus\interior{D^3}$ and $L_2\setminus\interior{D^3}$ along.
    
    The map induced by the product of the restrictions
    \[H^\ast(B\Diff(L_2\setminus\interior{D^3})_0 \times B\Diff(L_1\setminus\interior{D^3})_0)\to H^\ast(B\Diff(M, S^2)_0)\]
    is surjective, and through it we obtain
    \[H^\ast(B\Diff(M, S^2)_0)\cong\mathbb{Q}[\mu, \eta,\nu, \vartheta]/(\mu\eta, \nu\vartheta, \mu^2+\eta^2 - \nu^2-\eta^2).\]
    Furthermore the composition $B\Diff(M, S)_0\to B\Diff(M, S)\to B\Diff(M)$ induces an inclusion of the cohomology of $B\Diff(M)$ as the subring
    \[H^\ast(B\Diff(M))\cong \mathbb{Q}[\mu^2, \eta^2, \nu^2, \vartheta^2] / (\mu^2\eta^2, \nu^2\vartheta^2, \mu^2+\eta^2-\nu^2-\vartheta^2).\]
\end{theorem}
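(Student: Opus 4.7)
The first assertion is exactly Theorem \ref{thm: the rational cohommology of the main gorups identitiy component}, so the real content is the computation of $H^\ast(B\Diff(M))$. My plan is to combine Hatcher's theorem (Theorem \ref{theorem of Hatcher}) with the standard rational identification $H^\ast(BG) \cong H^\ast(BG_0)^{\pi_0 G}$ for topological groups with finitely many components. Since $\Diff(M,S) \simeq \Diff(M)$ by Hatcher and $\pi_0\Diff(M,S) \cong C_2 \times C_2$ is finite by Theorem \ref{thm: mapping class group}, the rational Serre spectral sequence of the fibration $B\Diff(M,S)_0 \to B\Diff(M,S) \to B(C_2 \times C_2)$ collapses onto the $p=0$ column (since $H^{>0}(B(C_2 \times C_2);\mathbb{Q}) = 0$), with the local coefficient system yielding the invariants of the monodromy action; the edge map then realizes the claimed inclusion of $H^\ast(B\Diff(M))$ into $H^\ast(B\Diff(M)_0)$ as the subring of $(C_2 \times C_2)$-invariants.

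Next I would invoke the preceding Proposition, which spells out the action: $(-1,1)$ negates $\mu, \eta$ and fixes $\nu, \vartheta$, while $(1,-1)$ does the symmetric thing. Setting $R = \mathbb{Q}[\mu, \eta, \nu, \vartheta]$, $I = (\mu\eta, \nu\vartheta, \mu^2+\eta^2-\nu^2-\vartheta^2)$, and $G = C_2 \times C_2$, one checks that each generator of $I$ is manifestly $G$-invariant (every monomial appearing has both $(\mu,\eta)$-parity and $(\nu,\vartheta)$-parity even), so $I$ is $G$-stable and Lemma \ref{lem: fixed points of a quotient} applies, reducing the problem to identifying $R^G / I^G$ with $I^G = I \cap R^G$.

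To finish, I would describe $R^G$ as being spanned by monomials $\mu^a \eta^b \nu^c \vartheta^d$ with $a+b$ and $c+d$ both even, equivalently
\[R^G = \mathbb{Q}[\mu^2, \eta^2, \mu\eta, \nu^2, \vartheta^2, \nu\vartheta]/\bigl((\mu\eta)^2 - \mu^2\eta^2,\; (\nu\vartheta)^2 - \nu^2\vartheta^2\bigr).\]
Because the three generators of $I$ are themselves $G$-invariant, applying $\frac{1}{|G|}\sum_{g \in G} g$ to any presentation $\sum r_i g_i$ of a $G$-invariant element of $I$ gives the same element written with coefficients in $R^G$; hence $I^G$ is precisely the $R^G$-ideal generated by $\mu\eta$, $\nu\vartheta$, and $\mu^2+\eta^2-\nu^2-\vartheta^2$. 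Quotienting $R^G$ by $\mu\eta$ and $\nu\vartheta$ both eliminates these generators and turns the relations $(\mu\eta)^2 = \mu^2\eta^2$ and $(\nu\vartheta)^2 = \nu^2\vartheta^2$ into $\mu^2\eta^2 = 0$ and $\nu^2\vartheta^2 = 0$, leaving $\mathbb{Q}[\mu^2, \eta^2, \nu^2, \vartheta^2]/(\mu^2\eta^2, \nu^2\vartheta^2, \mu^2+\eta^2-\nu^2-\vartheta^2)$, as required. The main obstacle is the averaging step identifying $I^G$ with the $R^G$-ideal generated by the $G$-invariant generators of $I$; once that is in hand the rest is mechanical invariant-theoretic bookkeeping.
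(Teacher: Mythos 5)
Your proposal is correct and follows essentially the same route as the paper: quote Theorem \ref{thm: the rational cohommology of the main gorups identitiy component} for the first part, use the fibration $B\Diff(M,S)_0\to B\Diff(M,S)\to B\pi_0\Diff(M,S)$ with finite $\pi_0$ to identify $H^\ast(B\Diff(M))$ with the $(C_2\times C_2)$-invariants, and then compute those invariants via Lemma \ref{lem: fixed points of a quotient} and the Proposition describing the action. Your explicit presentation of $R^G$ and the averaging argument identifying $I^G$ with the $R^G$-ideal generated by $\mu\eta$, $\nu\vartheta$, $\mu^2+\eta^2-\nu^2-\vartheta^2$ in fact spells out a step the paper's proof leaves terse.
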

\begin{proof}
    Let us denote $R = \mathbb{Q}[\mu, \eta, \nu, \vartheta]$, $G = C_2\times C_2$, and $I = (\mu\eta, \nu\vartheta, \mu^2+\eta^2 - \nu^2-\vartheta^2)$, these are such that they satisfy the assumptions of Lemma \ref{lem: fixed points of a quotient}.
    Calculation allows us to see that 
    \[R^G \cong \mathbb{Q}[\mu^2, \mu\eta, \eta^2, \nu^2, \nu\vartheta, \vartheta^2].\]
    This is because the fixed points of the action are polynomials whose terms only contain monomials $\mu^a\eta^b\nu^c\vartheta^d$ such that both $a+b$ and $c+d$ are even.
    All the generators of $I$ are in $R^G$ and therefore $I^G$ is generated by these same monomials.
\end{proof}
\bibliographystyle{amsalpha}
\bibliography{main}

\end{document}